\documentclass[12pt]{amsart}

\usepackage{amssymb,enumerate,amsthm,yfonts,mathrsfs}
\usepackage{amsmath}
\usepackage{bbm}           
\usepackage{bm} 
\usepackage{eso-pic,graphicx}
\usepackage{tikz}

\usepackage[colorlinks=true, pdfstartview=FitV, linkcolor=blue, citecolor=blue, urlcolor=blue]{hyperref}

\makeatletter
\def\Ddots{\mathinner{\mkern1mu\raise\p@
\vbox{\kern7\p@\hbox{.}}\mkern2mu
\raise4\p@\hbox{.}\mkern2mu\raise7\p@\hbox{.}\mkern1mu}}
\makeatother

\def\Xint#1{\mathchoice
{\XXint\displaystyle\textstyle{#1}}%
{\XXint\textstyle\scriptstyle{#1}}%
{\XXint\scriptstyle\scriptscriptstyle{#1}}%
{\XXint\scriptscriptstyle\scriptscriptstyle{#1}}%
\!\int}
\def\XXint#1#2#3{{\setbox0=\hbox{$#1{#2#3}{\int}$}
\vcenter{\hbox{$#2#3$}}\kern-.5\wd0}}

\def\dashint{\Xint-}

\newtheorem{theorem}{Theorem}[section]

\newtheorem{conjecture}[theorem]{Conjecture}

\newtheorem{example}[theorem]{Example}

\theoremstyle{definition}
\newtheorem{remark}[theorem]{Remark}

\newtheorem{lemma}[theorem]{Lemma}

\def\al{{\alpha}}
\def\R{\mathbb R}

\def\Z{\mathbb Z}

\def\ra{\rightarrow}
\def\bey{\begin{eqnarray*}}
\def\eey{\end{eqnarray*}}
 
\numberwithin{equation}{section}
 \allowdisplaybreaks

\DeclareMathOperator{\supp}{supp}

\def\D{{\mathscr D}}

\def\M{{\mathcal M}}

\def\Sp{{\mathcal S}}

\begin{document}

\title[A fractional Muckenhoupt-Wheeden theorem]{A fractional Muckenhoupt-Wheeden theorem and its consequences}
\author{David Cruz-Uribe, SFO and Kabe Moen }

\thanks{The first author is supported by the Stewart-Dorwart faculty
  development fund at Trinity College and by
  grant MTM2009-08934 from the Spanish Ministry of Science and
  Innovation.  The second author is supported by NSF Grant 1201504}

 \address{David Cruz-Uribe, SFO, Department of Mathematics, Trinity College, Hartford, CT 06106, USA}

 \address{Kabe Moen, Department of Mathematics, University of Alabama, Tuscaloosa, AL 35487, USA}

\subjclass{42B25, 42B30, 42B35}

\keywords{Riesz potentials, fractional integral operators, Muckenhoupt
weights, sharp constants, two weight inequalities, bump conditions.}

\maketitle

\begin{abstract} In the 1970s Muckenhoupt and Wheeden made several
  conjectures relating two weight norm inequalities for the
  Hardy-Littlewood maximal operator to such inequalities for singular
  integrals.  Using techniques developed for the recent proof of the
  $A_2$ conjecture we prove a related pair of conjectures linking the
  Riesz potential and the fractional maximal operator.  As a
  consequence we are able to prove a number of sharp one and two
  weight norm inequalities for the Riesz potential.
\end{abstract}

\section{Introduction} 
In this paper we prove weight norm inequalities for the Riesz potential operator 
$$I_\al f(x)=\int_{\R^n}\frac{f(y)}{|x-y|^{n-\al}}\,dy, \qquad
0<\al<n.$$
Our main result is motivated by a pair of conjectures for singular
integrals due to Muckenhoupt and Wheeden, and to provide a foundation
for our work we first sketch these conjectures and the known results.

In the 1970s Muckenhoupt and Wheeden~\cite{muckenhouptP} conjectured that if $T$ is a
Calder\'on-Zygmund singular integral operator, then given a pair of
weights $(u,v)$, for $1<p<\infty$, 
\begin{equation} \label{eqn:sio1}
  T : L^p(v) \rightarrow L^p(u)
\end{equation}
provided that the Hardy-Littlewood maximal operator satisfies
\begin{gather}
M : L^p(v) \rightarrow L^p(u) \label{eqn:max1} \\
M : L^{p'}(u^{1-p'})\rightarrow L^{p'}(v^{1-p'}) \label{eqn:max2}. 
\end{gather}
Further, they conjectured that 
\begin{equation} \label{eqn:sio2}
  T : L^p(v) \rightarrow L^{p,\infty}(u)
\end{equation}
provided that \eqref{eqn:max2} holds.    Originally, they made these conjectures
for the Hilbert transform, but they were soon extended to
Calder\'on-Zygmund singular integrals.     While extremely suggestive
and true in many important cases,
both of these conjectures are false.  A counter-example to  the strong
type conjecture was found by Reguera and
Scurry~\cite{reguera-scurryP}, and this was extended to the weak-type
conjecture by the first author, Reznikov and Volberg~\cite{CRV2012}.  

However, a version of these conjectures is true in the
off-diagonal case.  If $1<p<q<\infty$, then the first author, Martell
and P\'erez~\cite{cruz-martell-perezP} showed that 
\begin{equation} \label{eqn:sio3}
  T : L^p(v) \rightarrow L^q(u)
\end{equation}
provided that 
\begin{gather}
M : L^p(v) \rightarrow L^q(u) \label{eqn:max3} \\
M : L^{q'}(u^{1-q'})\rightarrow L^{q'}(v^{1-q'}) \label{eqn:max4}. 
\end{gather}
and that 
\begin{equation} \label{eqn:sio4}
  T : L^p(v) \rightarrow L^{q,\infty}(u)
\end{equation}
provided that \eqref{eqn:max4} holds.   In fact, they proved a
quantitative version of this result in a slightly different form.  Let
$\sigma=v^{1-q'}$; then they showed that 
\begin{gather}
\|T(\,\cdot\,\sigma)\|_{L^p(\sigma)\ra L^{q,\infty}(u)}\lesssim
\|M(\,\cdot\, u)\|_{L^{q'}(u)\ra L^{p'}(\sigma)} \notag \\
\intertext{and}
\|T(\,\cdot\,\sigma)\|_{L^p(\sigma)\ra
  L^{q}(u)}\lesssim\|M(\,\cdot\,\sigma)\|_{L^p(\sigma)\ra L^{q}(u)}
+ \|M(\,\cdot\, u)\|_{L^{q'}(u)\ra L^{p'}(\sigma)}. \label{eqn:sio-max-strong}
\end{gather}
Replacing $f$ by $f/\sigma$ or $f/u$ yields inequalities in the form
given above.  This formulation has two advantages.  First, the weights do
not change under duality.  More precisely,   if  $M$ were a linear, self-adjoint
operator, then the inequality gotten from~\eqref{eqn:max3} by duality
would be~\eqref{eqn:max4}.  However, in the new formulation, the two
norm inequalities on the right-hand side of
~\eqref{eqn:sio-max-strong} would be dual.     Even though
the maximal operator is not linear, we will abuse terminology and
continue to refer to these as dual inequalities.  Second, this
formulation makes it easier to consider weights $v$ that are equal to
infinity on a set of positive measure, replacing it with a weight that
is zero.   Hereafter we will formulate all of our weighted norm
inequalities in this way.  

\bigskip

Our main result is an extension of these off-diagonal results to the
case of Riesz potentials with the Hardy-Littlewood maximal operator
replaced by the fractional maximal operator of Muckenhoupt and
Wheeden~\cite{muckenhoupt-wheeden74}:  
$$M_\al f(x)=\sup_{Q\ni x} |Q|^{\frac{\al}{n}}\,\dashint_Q |f|\,dy,\qquad 0\leq \al<n.$$

\begin{theorem}\label{thm:main} Given $0<\al<n$, $1<p<q<\infty$, and a
  pair of weights $(u,\sigma)$, then 
$$\|I_\al(\,\cdot\,\sigma)\|_{L^p(\sigma)\ra L^{q,\infty}(u)}\simeq \|M_\al(\,\cdot\, u)\|_{L^{q'}(u)\ra L^{p'}(\sigma)}$$
and 
$$\|I_\al(\,\cdot\,\sigma)\|_{L^p(\sigma)\ra
  L^{q}(u)}\simeq\|M_\al(\,\cdot\,\sigma)\|_{L^p(\sigma)\ra L^{q}(u)}+
\|M_\al(\,\cdot\, u)\|_{L^{q'}(u)\ra L^{p'}(\sigma)}.$$
In both inequalities the constants depend on $n$, $\alpha$, $p$ and $q$.
\end{theorem}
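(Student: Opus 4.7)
The theorem consists of two equivalences, and in each the hard direction is controlling $I_\al$ by the maximal testing constants. The reverse inequalities (that the testing constants are dominated by the norm of $I_\al$) follow from the pointwise Riesz bound
\[ I_\al(f\sigma)(x) \;\gtrsim_{n,\al}\; |Q|^{\al/n}(f\sigma)_Q \qquad \text{whenever } x \in Q, \]
which is immediate from $|x-y|^{-(n-\al)} \gtrsim |Q|^{-1+\al/n}$ on $Q$. Averaging gives $M_\al \lesssim I_\al$ pointwise, and combining with the self-adjointness relation $\int I_\al(f\sigma)\,g\,u\,dx = \int I_\al(gu)\,f\,\sigma\,dx$ --- which swaps $p \leftrightarrow q'$ and $\sigma\leftrightarrow u$ --- produces the $\gtrsim$ half of the strong-type equivalence. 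For the weak-type $\gtrsim$ half one linearizes $M_\al(\cdot u)$ via Sawyer's standard discretization and tests $I_\al$ against the resulting indicator functions, using the pointwise lower bound above to convert the weak-type constant into a bound on the linearized $M_\al$ norm.

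For the upper bounds the plan is to reduce $I_\al$ to a dyadic model via the pointwise sparse domination
\[ I_\al f(x) \;\lesssim_{n,\al}\; \sum_{j=1}^{3^n}\sum_{Q\in\mathcal S_j}|Q|^{\al/n}\Bigl(\dashint_Q f\Bigr)\chi_Q(x), \]
where each $\mathcal S_j$ is a sparse subfamily of one of Hyt\"onen's $3^n$ shifted dyadic grids. This decomposition (a kernel split by annuli combined with a Calder\'on--Zygmund-style stopping time) reduces the entire theorem to bounding the positive fractional sparse operator
\[ \mathcal A_{\mathcal S,\al}(f\sigma)(x)=\sum_{Q\in\mathcal S}|Q|^{\al/n}(f\sigma)_Q\,\chi_Q(x) \]
from $L^p(\sigma)$ into $L^q(u)$, or into $L^{q,\infty}(u)$.

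To bound the strong-type norm of $\mathcal A_{\mathcal S,\al}$, pair with a dual $g \in L^{q'}(u)$ and estimate $\Sigma := \sum_{Q\in\mathcal S}|Q|^{\al/n}(f\sigma)_Q(gu)_Q$. Following the $A_2$-conjecture blueprint of Lacey--Sawyer--Shen--Uriarte-Tuero and Hyt\"onen, build two \emph{parallel} stopping-time families inside $\mathcal S$: one whose stopping cubes record geometric jumps of $Q\mapsto (f\sigma)_Q/\sigma(Q)$, and its dual recording jumps of $(gu)_Q/u(Q)$. Split $\Sigma$ according to which tree is ``in charge'' of each cube; cubes of the first type behave like localized $M_\al(\cdot\sigma)$ averages paired against $gu$ and are bounded by the forward testing constant $\|M_\al(\cdot\sigma)\|_{L^p(\sigma)\to L^q(u)}$, while cubes of the second type are dually bounded by $\|M_\al(\cdot u)\|_{L^{q'}(u)\to L^{p'}(\sigma)}$. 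For the weak-type estimate only the second corona, run off $u$-averages on the level set $\{I_\al(f\sigma)>\lambda\}$, is required, which accounts for the appearance of a single testing constant on the right.

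The principal obstacle I anticipate is the interaction of the fractional scale factor $|Q|^{\al/n}$ with the off-diagonal exponents $p<q$. Unlike the diagonal $A_2$ setting, one cannot rely on $L^p$--$L^p$ quasi-orthogonality within a corona; the geometric decay between stopping generations must be extracted via a quantitatively sharp fractional Carleson embedding strong enough to absorb the $|Q|^{\al/n}$ factor and yield a constant depending only on $n,\al,p,q$. Once such an embedding is in place, the two-tree decomposition matches both testing constants exactly as in the off-diagonal singular integral result of Cruz-Uribe--Martell--P\'erez referenced above.
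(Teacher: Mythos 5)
Your lower bound (the $\gtrsim$ direction) is essentially the paper's argument: the pointwise estimate $M_\alpha f\lesssim I_\alpha f$ for $f\ge0$, self-adjointness of $I_\alpha$, and Sawyer's characterization of the $M_\alpha$ testing constant. No issues there.

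The upper bound is where your route diverges, and where there is a genuine gap. The paper does \emph{not} run a two-tree corona decomposition or construct a new fractional Carleson embedding. Instead it reduces to the full dyadic model $I_\alpha^{\D}$ (the complete sum in~\eqref{eqn:dR-defn}, not a sparse subfamily) via~\eqref{dyadicboundI}, and then invokes the Lacey--Sawyer--Uriarte-Tuero characterization (Theorem~\ref{outtest}): for $p<q$, the norm of $I_\alpha^{\D}(\,\cdot\,\sigma)$ is equivalent to testing constants for the \emph{outer} Riesz potential
\[
I_\alpha^{Q_0}(\sigma\chi_{Q_0})(x)=\sum_{\substack{Q\in\D\\ Q\supset Q_0}}|Q|^{\alpha/n}\dashint_Q \sigma\chi_{Q_0}\,dy\cdot\chi_Q(x),
\]
a sum only over cubes \emph{containing} $Q_0$. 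All the heavy lifting --- precisely the off-diagonal quasi-orthogonality issue you flag at the end as an unresolved obstacle --- is absorbed inside the LSUT theorem. Once there, the decisive (and elementary) observation is that the outer sum is a geometric series in side length, so pointwise $I_\alpha^{Q_0}(\sigma\chi_{Q_0})(x)\le (1-2^{\alpha-n})^{-1}M_\alpha(\sigma\chi_{Q_0})(x)$; hence the outer testing constant is dominated by $\|M_\alpha(\,\cdot\,\sigma)\|_{L^p(\sigma)\to L^q(u)}$, and similarly for the dual.

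By contrast, your corona plan asserts without justification that the $\sigma$-stopping pieces ``are bounded by the forward testing constant $\|M_\al(\cdot\sigma)\|_{L^p(\sigma)\to L^q(u)}$.'' In a two-tree corona for a fractional sparse operator what one naturally extracts from the stopping cubes are Sawyer-type testing quantities for $I_\alpha$ itself, i.e.\ expressions of the form $\int_Q I_\alpha(\sigma\chi_Q)^q\,u\,dx$, and these do \emph{not} collapse pointwise to $M_\alpha$: the inner Riesz sum over $Q'\subset Q$ carries contributions from every scale below $\ell(Q)$ and is in general strictly larger than a single fractional average. The pointwise collapse to $M_\alpha$ is special to the \emph{outer} operator, which is exactly why the LSUT outer testing condition --- available only for $p<q$, matching the theorem's hypothesis --- is the right tool. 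Absent that input, the step from your corona to the $M_\alpha$ testing constant is unproved, and filling it would amount to re-deriving the LSUT theorem. You would either need to cite LSUT as the paper does, or supply the fractional Carleson embedding you anticipate but do not construct, together with an argument that the resulting local pieces really reduce to $M_\alpha$ rather than to $I_\alpha$ testing.
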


An open question is whether Theorem~\ref{thm:main} is true in the case
$p=q$.  Given the parallels between Riesz potentials and singular
integrals this seems doubtful and so we frame the conjecture in the
negative.   

\begin{conjecture} \label{conj:forlorn}
Theorem~\textup{\ref{thm:main}}  is false when $p=q$:  there exists a
pair $(u,\sigma)$ such that $\|M_\al(\,\cdot\, u)\|_{L^{p'}(u)\ra
  L^{p'}(\sigma)}<\infty$ but
$\|I_\al(\,\cdot\,\sigma)\|_{L^p(\sigma)\ra L^{p,\infty}(u)}=\infty$.  
\end{conjecture}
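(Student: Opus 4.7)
The plan is to construct a counterexample by adapting to the fractional setting the constructions of Reguera and Scurry~\cite{reguera-scurryP} and of Cruz-Uribe, Reznikov and Volberg~\cite{CRV2012} that defeat the analogous Muckenhoupt-Wheeden weak-type conjecture for singular integrals. As a first step I would pass to a dyadic model, replacing $I_\alpha$ by the positive dyadic operator
\[
I_\alpha^{\D} f = \sum_{Q\in\D} |Q|^{\alpha/n}\left(\dashint_Q f\,dy\right)\chi_Q,
\]
and $M_\alpha$ by its dyadic version, using the standard equivalence between the continuous and dyadic operators for the relevant two-weight norm inequalities. This reduces the problem to a purely combinatorial construction on a single dyadic lattice.

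Next, I would attempt the following construction. Fix a deeply nested tower of dyadic cubes $Q_0\supset Q_1\supset\cdots\supset Q_N$ with a small ratio $|Q_{k+1}|/|Q_k|=\lambda$ and place essentially all of $\sigma$ on the innermost cube $Q_N$, while distributing $u$ across the annuli $Q_k\setminus Q_{k+1}$ with geometrically decaying densities tuned so that each generation's contribution to $|Q_k|^{\alpha/n}\langle u\rangle_{Q_k}\sigma(Q_N)$ has the same order. For such a pair, the fractional maximal testing constant $\|M_\alpha(\,\cdot\,u)\|_{L^{p'}(u)\to L^{p'}(\sigma)}$ is controlled by a \emph{supremum} of the per-generation contributions and can be kept uniformly bounded in $N$, whereas $I_\alpha(\,\cdot\,\sigma)$ accumulates a \emph{sum} of these contributions across all $N$ generations, forcing the weak-type constant to grow with $N$. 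An amalgamation of such towers on disjoint cubes, in the spirit of the CRV argument, should then produce a single pair $(u,\sigma)$ for which the maximal testing constant is finite while $\|I_\alpha(\,\cdot\,\sigma)\|_{L^p(\sigma)\to L^{p,\infty}(u)}=\infty$.

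The main obstacle I expect is precisely the positivity of $I_\alpha$. In the singular integral setting, the Reguera-Scurry example exploits Haar cancellation so that the underlying dyadic operator is a martingale transform whose weighted weak-type bound can fail even when the associated maximal testing condition holds. Here, $I_\alpha^{\D}$ is purely positive, so the only available mechanism is the accumulation of comparable positive contributions across scales, and verifying that the weak-type norm actually diverges --- rather than being rescued by the tolerance of the $L^{p,\infty}$ quasi-norm for large-but-rare level sets --- is delicate. Concretely, one must estimate the $u$-measure of the super-level set $\{I_\alpha(\chi_{Q_N}\sigma)>t\}\cap Q_0$ and show that it exceeds the permitted upper bound $\sigma(Q_N)^{p/p'}/t^p$. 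A secondary difficulty is that when $p=q$ the scaling exponents in the $M_\alpha$-testing condition and in the $I_\alpha$-weak-type constant match exactly, so any divergence must come from a logarithmic or polynomial-in-$N$ gap between a supremum over generations and a sum over generations; the heart of the argument will be the simultaneous calibration of the three parameters $\lambda$, $N$, and the per-generation density of $u$ so as to open up such a gap while keeping $\sigma$ and the maximal testing constant under control.
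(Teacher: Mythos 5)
The paper does not prove this statement. Conjecture~\ref{conj:forlorn} is explicitly posed as an \emph{open problem}; the authors say they ``frame the conjecture in the negative'' by analogy with the Reguera--Scurry and Cruz-Uribe--Reznikov--Volberg counterexamples for singular integrals, but they offer no construction. There is therefore no proof in the paper against which to compare your proposal, and any claim to have proved the statement would itself be a contribution beyond what the paper contains.

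Evaluated on its own terms, your proposal is a research plan rather than a proof, and you have already identified the gap yourself. The Reguera--Scurry construction and its weak-type refinement in~\cite{CRV2012} hinge on the cancellation built into martingale transforms: the weighted bound fails because the operator produces large values through signed interactions that the positive maximal function does not detect. The Riesz potential $I_\alpha$ and its dyadic model $I_\alpha^{\D}$ are positive, so this mechanism is simply unavailable, and the only remaining source of divergence is, as you say, the accumulation of comparable positive contributions across scales. But this accumulation is precisely what the $L^{p,\infty}$ quasi-norm is most forgiving of: in your tower the super-level sets $\{I_\alpha(\sigma\chi_{Q_N})>t\}$ are nested, so $u(\{I_\alpha(\sigma\chi_{Q_N})>t\})$ is controlled by $u(Q_{k(t)})$ for a single generation $k(t)$, and one must show that the $u$-density on that one annulus is large enough to defeat $t^{-p}\,\sigma(Q_N)$ while the same densities remain small enough that the $M_\alpha$ testing constant (a supremum, with $\sigma$ concentrated on $Q_N$) stays bounded. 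You explicitly defer this calibration (``the heart of the argument will be the simultaneous calibration of the three parameters $\lambda$, $N$, and the per-generation density of $u$''), and it is not obvious it can be done: when $p=q$ the scaling exponents in the two conditions match exactly, as you also note, so nothing in the construction as described forces a gap to open. Until that calibration is exhibited and the amalgamation argument is shown to preserve both the finiteness of $\|M_\alpha(\,\cdot\,u)\|_{L^{p'}(u)\to L^{p'}(\sigma)}$ and the divergence of the weak-type norm, this is an outline of where a proof might be sought, not a proof.
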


\bigskip

The remainder of this paper is organized as follows.  In
Sections~\ref{section:applications} and~\ref{section:apps2}
we give applications of Theorem~\ref{thm:main} to sharp constant,
one weight norm inequalities and to two weight, $A_p$ bump
conditions.   We will also discuss some conjectures related
to~\ref{conj:forlorn} made by us in an earlier
paper~\cite{cruz-moen-2012}.  In Section~\ref{section:main-thm} we
prove Theorem~\ref{thm:main}.  Finally, 
in Sections~\ref{section:other-proofs} and~\ref{section:op2} we prove the
results from Sections~\ref{section:applications} and~\ref{section:apps2}.

Throughout this paper all notation is standard or will be defined as
needed.  By a cube we will always mean a cube whose sides are parallel
to the coordinate axes.  If we write $A \lesssim B$, then $A\leq cB$,
where the constant $c$ depends on $n$, $p$, $q$ and $\alpha$.  By $A
\simeq B$ we mean that $A \lesssim B$ and $B \lesssim A$.  

\section{Generalized one weight inequalities}
\label{section:applications}

Theorem~\ref{thm:main} shows that to prove strong and weak type norm
inequalities for the Riesz potential, we need to prove strong type
norm inequalities for the fractional maximal operator.  We will
consider two approaches.  In this section we give a generalization of
the sharp constant, one weight norm inequalities considered
in~\cite{cruz-moen-2012}.

Given $1<p<q<\infty$ and a pair of weights $(u,\sigma)$,  we define
\begin{multline*}A^\al_{p,q}(u,\sigma,Q)=|Q|^{\frac{\al}{n}+\frac1q-\frac1p}\left(\dashint_Q
  u\,dx\right)^{\frac1q}\left(\dashint_Q \sigma\,dx\right)^{\frac{1}{p'}} \\
  = |Q|^{\frac{\al}{n}+\frac1q-\frac1p}\|u^{\frac1q}\|_{q,Q}\|\sigma^{\frac{1}{p'}}\|_{p',Q}.\end{multline*}
Note that this functional is symmetric in $u$ and $\sigma$:
$$A^\al_{p,q}(u,\sigma,Q)=A^\al_{q',p'}(\sigma,u,Q).$$
It is well known (cf.~\cite[p.~115]{MR2797562}) that if 
$$[u,\sigma]_{A_{p,q}^\alpha}=\sup_Q
A^\al_{p,q}(u,\sigma,Q)<\infty,$$
then
$$M_\al(\,\cdot\,\sigma):L^{p}(\sigma)\ra L^{q,\infty}(u) \quad \text{and} \quad M_\al(\,\cdot\,u):L^{q'}(\sigma)\ra L^{p',\infty}(u).$$

The strong type inequality
$$M_\al(\,\cdot\,\sigma):L^p(\sigma)\ra L^q(u)$$
holds in this case 
if we assume also that the weight $\sigma$ satisfies a reverse H\"older
inequality; equivalently, if we assume that $\sigma$ is in the
Muckenhoupt class $A_\infty$.   This class can be defined in several ways.
Traditionally (see~\cite{garcia-cuerva-rubiodefrancia85}) we say that
$\sigma\in A_\infty$ if 
$$[\sigma]_{A^{\exp}_\infty}= \sup_QA^{\exp}_\infty(\sigma,Q)=\sup_Q\left(\dashint_Q \sigma\,dx\right) \,\exp\left(-\dashint_Q\log\sigma\,dx\right)<\infty.$$
This is now sometimes referred to as the exponential $A_\infty$
condition.  However, for the purposes of sharp constant estimates, an equivalent
definition is very useful:  $\sigma\in A_\infty$ if and only if
$$[\sigma]_{A^M_\infty}= \sup_Q A^M_\infty(\sigma,Q) = \sup_Q \frac{1}{\sigma(Q)}\int_Q
M(\sigma\chi_Q)(x)\,dx<\infty,$$
where $M$ is the Hardy-Littlewood maximal operator. 
This equivalent condition was discovered independently by
Fujii~\cite{MR0481968} and Wilson~\cite{MR883661,wilson89} (see also
\cite{wilson07}).   The importance of this condition is that
$[\sigma]_{A^M_\infty} \lesssim [\sigma]_{A^{\exp}_\infty}$ and in fact the constant $[\sigma]_{A^M_\infty}$ can be substantially smaller
\cite{beznosova-reznikovP,hytonen-perez-analPDE}.  

Using Theorem~\ref{thm:main} we give norm estimates for Riesz
potentials in terms of these quantities.  Our approach to this problem
is based on recent work on the sharp constants for singular integrals.
(For the history of these results,
see~\cite{hytonen-lacey-IUMJ,hytonen-perez-analPDE,lerner-IMRN2012}
and the references they contain.)  The natural approach when $p$ and
$q$ satisfy the Sobolev relationship $1/p-1/q=\alpha/n$, is to find
sharp estimates in terms of $[u,\sigma]_{A_{p,q}^\alpha}$.  This case
was studied in~\cite{MR2652182}.  Our goal here is to refine these
estimates and extend them to general $p<q$. Following the work of
Hyt\"onen and P\'erez~\cite{hytonen-perez-analPDE}, we find sharp
constants in terms of $[u,\sigma]_{A_{p,q}^\alpha}$,
$[u]_{A^M_\infty}$ and $[\sigma]_{A^M_\infty}$.  We also give an
alternative approach: following Lerner and the second
author~\cite{lerner2012,lerner-moenP}, we prove estimates in terms of a
mixed condition that combines the $A_{p,q}^\alpha$ and $A_\infty$
condition:
$$[u,\sigma]_{A_{p,q}^\alpha(u,\sigma)A^{\exp}_\infty(\sigma)^{\frac1q}} 
=\sup_Q A^\al_{p,q}(u,\sigma,Q)A^{\exp}_\infty(\sigma,Q)^{\frac1q}.$$

The next result gives both kinds of estimates for the fractional
maximal operator; We defer the proof until
Section~\ref{section:other-proofs}. 

\begin{theorem} \label{maximalbound} 
Given $0<\alpha<n$ and  $1<p\leq q<\infty$, suppose $(u,\sigma)\in
A_{p,q}^\alpha$ and $\sigma\in A_\infty$.  Then
\begin{equation} \label{eqn:maximalbound1}
\|M_\al(\,\cdot\,\sigma)\|_{L^p(\sigma)\ra L^q(u)}\lesssim 
[u,\sigma]_{A_{p,q}^\alpha(u,\sigma)A^{\exp}_\infty(\sigma)^{\frac1q}}
\end{equation}
and
\begin{equation} \label{eqn:maximalbound2}
\|M_\al(\,\cdot\,\sigma)\|_{L^p(\sigma)\ra L^q(u)}
\lesssim [u,\sigma]_{A_{p,q}}[\sigma]_{A^M_\infty}^{\frac1q}.
\end{equation}
\end{theorem}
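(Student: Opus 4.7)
The plan is to prove both inequalities via Sawyer's two-weight testing characterization for $M_\al$: the operator norm $\|M_\al(\,\cdot\,\sigma)\|_{L^p(\sigma)\to L^q(u)}$ is comparable to the testing constant
$$\mathfrak{T}=\sup_{Q}\sigma(Q)^{-1/p}\Bigl(\int_Q M_\al(\sigma\chi_Q)^q u\,dx\Bigr)^{1/q}.$$
By the usual reduction via finitely many shifted dyadic grids, it suffices to bound the dyadic analog of $\mathfrak{T}$ for a single dyadic system $\D$.

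Fix a dyadic cube $Q_0$. A principal-cube construction applied to $M_\al^{\D}(\sigma\chi_{Q_0})$ produces a sparse family $\mathcal{F}\subset\D$ of subcubes of $Q_0$, with pairwise disjoint sets $E_Q\subset Q$ satisfying $|E_Q|\geq\tfrac12|Q|$, and the pointwise bound
$$M_\al^{\D}(\sigma\chi_{Q_0})(x)\;\lesssim\;\sum_{Q\in\mathcal{F}}|Q|^{\al/n-1}\sigma(Q)\,\chi_{E_Q}(x).$$
Integrating against $u\,dx$, using $u(E_Q)\leq u(Q)$, and applying the algebraic identity $(|Q|^{\al/n-1}\sigma(Q))^q u(Q)=A_{p,q}^\al(u,\sigma,Q)^q\,\sigma(Q)^{q/p}$ (immediate from the definition of $A_{p,q}^\al$), I reduce the task to bounding the sparse sum $\sum_{Q\in\mathcal{F}}A_{p,q}^\al(u,\sigma,Q)^q\sigma(Q)^{q/p}$.

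For \eqref{eqn:maximalbound2}, I pull out the supremum $[u,\sigma]_{A_{p,q}^\al}^q$, write $\sigma(Q)^{q/p}=\sigma(Q_0)^{q/p-1}\sigma(Q)$, and apply the Hyt\"onen--P\'erez Carleson estimate
$$\sum_{Q\in\mathcal{F},\,Q\subseteq Q_0}\sigma(Q)\;\lesssim\;[\sigma]_{A_\infty^M}\,\sigma(Q_0),$$
whose proof is immediate from sparseness ($|Q|\leq 2|E_Q|$), the bound $\sigma(Q)/|Q|\leq M_\D(\sigma\chi_{Q_0})(x)$ for $x\in Q$, and the Fujii--Wilson definition of $[\sigma]_{A_\infty^M}$. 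This gives $\mathfrak{T}\lesssim[u,\sigma]_{A_{p,q}^\al}[\sigma]_{A_\infty^M}^{1/q}$; observe that routing the argument through the testing constant rather than directly through the operator norm is exactly what produces the exponent $1/q$ in place of $1/p$. For \eqref{eqn:maximalbound1}, I follow the Lerner--Moen scheme, keeping the local $A_\infty^{\exp}$ factor inside the sum via the factorization
$$A_{p,q}^\al(u,\sigma,Q)^q\sigma(Q)^{q/p}=\bigl(A_{p,q}^\al(u,\sigma,Q)A_\infty^{\exp}(\sigma,Q)^{1/q}\bigr)^q\,\frac{\sigma(Q)^{q/p}}{A_\infty^{\exp}(\sigma,Q)},$$
pulling out the mixed supremum, and bounding the residual $\sum_{Q\in\mathcal{F}} A_\infty^{\exp}(\sigma,Q)^{-1}\sigma(Q)^{q/p}\lesssim\sigma(Q_0)^{q/p}$ via a local reverse H\"older inequality, which converts the $A_\infty^{\exp}(\sigma,Q)^{-1}$ factor into improved $\sigma$-sparseness on each $Q$.

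The principal obstacle is the last Carleson-type estimate in the mixed case: turning the local $A_\infty^{\exp}(\sigma,Q)$ information into a globally summable bound requires a careful local-to-global argument based on reverse H\"older inequalities for $\sigma$, while the Hyt\"onen--P\'erez step for \eqref{eqn:maximalbound2} is clean once one routes the argument through Sawyer's testing condition.
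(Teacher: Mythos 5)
Your argument is correct, but it takes a genuinely different route from the paper's.  The paper proves Theorem~\ref{maximalbound} by dominating $M_\al^\D$ pointwise by a sparse operator $L_\al^\Sp$, then bounding $\|L_\al^\Sp(f\sigma)\|_{L^q(u)}$ directly via the weighted Carleson embedding theorem (Lemma~\ref{carlesonlem}) in tandem with the $L^p(\mu)\to L^q(\mu)$ bound for the measure-weighted dyadic fractional maximal operator $M_{\beta,\sigma}^\D$ (Lemma~\ref{weightedmax}); the two Carleson constant estimates \eqref{Carlesonest1} and \eqref{Carlesonest2} are exactly where the geometric maximal operator and the Fujii--Wilson constant enter.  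You instead route through Sawyer's testing characterization for $M_\al$, which reduces everything to a single local Carleson-type sum $\sum_{Q\in\mathcal F,\,Q\subseteq Q_0}A_{p,q}^\al(u,\sigma,Q)^q\sigma(Q)^{q/p}$ over a fixed test cube $Q_0$.  This dispenses entirely with Lemmas~\ref{weightedmax} and~\ref{carlesonlem}, at the cost of invoking Sawyer testing as a black box.  Both routes are legitimate; yours is arguably shorter once testing is granted, while the paper's is more self-contained and also furnishes the Carleson-sequence machinery that gets reused elsewhere.

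Two small points.  First, the line ``write $\sigma(Q)^{q/p}=\sigma(Q_0)^{q/p-1}\sigma(Q)$'' should be an inequality $\sigma(Q)^{q/p}=\sigma(Q)^{q/p-1}\sigma(Q)\leq\sigma(Q_0)^{q/p-1}\sigma(Q)$, which holds because $Q\subseteq Q_0$ and $q/p\geq1$; it is the crucial place where $p\leq q$ is used.  Second, your claim that ``routing through testing is exactly what produces $1/q$ in place of $1/p$'' overstates things: the paper's non-testing argument also yields $1/q$, because the Carleson constant $C(\mathbf c)$ carries $[\sigma]_{A_\infty^M}$ to the first power and the $q$-th root is taken afterward.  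The worse exponent $1/p$ arises only in the older argument that applies P\'erez's $B_p$ Orlicz bump estimate rather than the fractional $B_p^\beta$ one.

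Finally, the ``principal obstacle'' you flag in the mixed one-supremum case dissolves cleanly if you use the geometric maximal operator rather than a reverse H\"older inequality.  Indeed, writing $A_\infty^{\exp}(\sigma,Q)^{-1}\sigma(Q)^{q/p}=\exp\bigl(\dashint_Q\log\sigma\bigr)\,|Q|\,\sigma(Q)^{q/p-1}\leq\sigma(Q_0)^{q/p-1}\exp\bigl(\dashint_Q\log\sigma\bigr)|Q|$, sparseness gives $|Q|\leq2|E_Q|$ with the $E_Q$ disjoint, so
\[
\sum_{Q\in\mathcal F}\exp\Bigl(\dashint_Q\log\sigma\Bigr)|Q|
\leq 2\int_{Q_0}M_0^\D(\sigma\chi_{Q_0})\,dx
\leq 2e\,\sigma(Q_0),
\]
by Lemma~\ref{geometric}.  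This is precisely the paper's mechanism for \eqref{Carlesonest1}; it requires no reverse H\"older input at all.
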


Hereafter, we will refer to estimates like \eqref{eqn:maximalbound1}
as one supremum estimates, and estimates like \eqref{eqn:maximalbound2}
as two suprema estimates.  In general the one supremum estimates are incomparable with the two suprema estimates (see \cite{lerner2012,lerner-moenP} for examples).

As an immediate consequence of Theorems~\ref{thm:main}
and~\ref{maximalbound} we get the following estimates for Riesz
potentials.  

\begin{theorem}  \label{thm:onewt-weak}
Given $0<\alpha<n$ and $1<p< q<\infty$, suppose $(u,\sigma)\in
A_{p,q}^\alpha$ and $u\in A_\infty$.   Then
\begin{equation} \label{eqn:weakRP1}
\|I_\al(\,\cdot\,\sigma)\|_{L^p(\sigma)\ra L^{q,\infty}(u)}
\lesssim [\sigma,u]_{A_{q',p'}^\alpha(\sigma,u)A^{\exp}_\infty(u)^{\frac1{p'}}}
\end{equation}
and
\begin{equation} \label{eqn:weakRP2}
\|I_\al(\,\cdot\,\sigma)\|_{L^p(\sigma)\ra L^{q,\infty}(u)}
\lesssim [\sigma,u]_{A_{q',p'}^\alpha}[u]_{A^M_\infty}^{\frac{1}{p'}}.
\end{equation}
\end{theorem}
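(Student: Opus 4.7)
The plan is to deduce Theorem~\ref{thm:onewt-weak} as an immediate consequence of Theorem~\ref{thm:main} combined with Theorem~\ref{maximalbound}; no new work beyond a careful dualization is needed. The weak-type half of Theorem~\ref{thm:main} gives
$$\|I_\al(\,\cdot\,\sigma)\|_{L^p(\sigma)\ra L^{q,\infty}(u)}\simeq \|M_\al(\,\cdot\, u)\|_{L^{q'}(u)\ra L^{p'}(\sigma)},$$
so the task reduces to estimating the strong-type norm of $M_\al$ on the swapped side and then quoting Theorem~\ref{maximalbound}.

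To apply Theorem~\ref{maximalbound}, I would set $(\tilde u,\tilde \sigma)=(\sigma,u)$ and $(\tilde p,\tilde q)=(q',p')$. The condition $1<\tilde p\le\tilde q<\infty$ is equivalent to $1<p\le q<\infty$. The symmetry $A^\al_{p,q}(u,\sigma,Q)=A^\al_{q',p'}(\sigma,u,Q)$ recorded in Section~\ref{section:applications} shows that the hypothesis $(u,\sigma)\in A_{p,q}^\alpha$ of Theorem~\ref{thm:onewt-weak} is literally the same as $(\tilde u,\tilde\sigma)\in A_{\tilde p,\tilde q}^\alpha$. The remaining hypothesis of Theorem~\ref{maximalbound}, namely $\tilde\sigma\in A_\infty$, becomes $u\in A_\infty$, which is exactly what is assumed.

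With these substitutions, \eqref{eqn:maximalbound1} applied to $M_\al(\,\cdot\,u)$ gives
$$\|M_\al(\,\cdot\,u)\|_{L^{q'}(u)\ra L^{p'}(\sigma)}\lesssim [\sigma,u]_{A_{q',p'}^\alpha(\sigma,u)A^{\exp}_\infty(u)^{\frac1{p'}}},$$
and combining with the equivalence from Theorem~\ref{thm:main} yields \eqref{eqn:weakRP1}. Analogously, \eqref{eqn:maximalbound2} yields
$$\|M_\al(\,\cdot\,u)\|_{L^{q'}(u)\ra L^{p'}(\sigma)}\lesssim [\sigma,u]_{A_{q',p'}^\alpha}[u]_{A^M_\infty}^{\frac1{p'}},$$
and hence \eqref{eqn:weakRP2}.

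There is no real obstacle here beyond bookkeeping: all the substantive content sits in Theorems~\ref{thm:main} and~\ref{maximalbound}. The one point to track carefully is that the weak-type estimate for $I_\al$ on the pair $(u,\sigma)$ corresponds, via Theorem~\ref{thm:main}, to a \emph{strong-type} estimate for $M_\al$ on the swapped pair $(\sigma,u)$ with dual exponents, so the $A_\infty$ hypothesis must be placed on $u$ (which plays the role of the second weight after the swap) rather than on $\sigma$. Once this matching of roles is in place, each inequality follows in one line.
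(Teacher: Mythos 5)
Your proof is correct and takes exactly the approach the paper intends: the paper derives Theorem~\ref{thm:onewt-weak} as an ``immediate consequence of Theorems~\ref{thm:main} and~\ref{maximalbound}'' without further elaboration, and your careful dualization --- setting $(\tilde u,\tilde\sigma,\tilde p,\tilde q)=(\sigma,u,q',p')$, invoking the symmetry $A^\al_{p,q}(u,\sigma,Q)=A^\al_{q',p'}(\sigma,u,Q)$, and noting that the $A_\infty$ hypothesis lands on $u$ --- is precisely the bookkeeping the paper leaves implicit. No gaps.
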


\begin{theorem} \label{thm:onewt-strong}
Given $0<\alpha<n$ and $1<p<q<\infty$, suppose $(u,\sigma)\in A_{p,q}^\alpha$
and $u,\,\sigma\in A_\infty$.  Then
\begin{equation} \label{eqn:strongRP1}
\|I_\al(\,\cdot\,\sigma)\|_{L^p(\sigma)\ra L^{q}(u)}
\lesssim [u,\sigma]_{A_{p,q}^\alpha(u,\sigma)A^{\exp}_\infty(\sigma)^{\frac1q}}
+[\sigma,u]_{A_{q',p'}^\alpha(\sigma,u)A^{\exp}_\infty(u)^{\frac{1}{p'}}}
\end{equation}
and
\begin{equation} \label{eqn:strongRP2}\|I_\al(\,\cdot\,\sigma)\|_{L^p(\sigma)\ra L^{q}(u)}
\lesssim [u,\sigma]_{A_{p,q}^\alpha}([u]_{A^M_\infty}^{\frac1{p'}}+[\sigma]_{A^M_\infty}^{\frac1q}).
\end{equation}
\end{theorem}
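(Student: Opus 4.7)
The plan is to combine Theorem~\ref{thm:main} with Theorem~\ref{maximalbound}. By the strong-type equivalence in Theorem~\ref{thm:main},
$$\|I_\al(\,\cdot\,\sigma)\|_{L^p(\sigma)\ra L^{q}(u)}\simeq \|M_\al(\,\cdot\,\sigma)\|_{L^p(\sigma)\ra L^{q}(u)}+ \|M_\al(\,\cdot\, u)\|_{L^{q'}(u)\ra L^{p'}(\sigma)},$$
so it suffices to estimate each fractional-maximal norm on the right and then add the resulting bounds.

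For the first norm $\|M_\al(\,\cdot\,\sigma)\|_{L^p(\sigma)\ra L^{q}(u)}$, I would apply Theorem~\ref{maximalbound} directly with exponents $(p,q)$ and the pair $(u,\sigma)$; the hypothesis $\sigma\in A_\infty$ yields both the one-supremum bound by $[u,\sigma]_{A_{p,q}^\alpha(u,\sigma)A^{\exp}_\infty(\sigma)^{1/q}}$ and the two-suprema bound by $[u,\sigma]_{A_{p,q}^\alpha}[\sigma]_{A^M_\infty}^{1/q}$.

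For the second norm $\|M_\al(\,\cdot\,u)\|_{L^{q'}(u)\ra L^{p'}(\sigma)}$, I would exploit the symmetry $A_{p,q}^\alpha(u,\sigma,Q)=A_{q',p'}^\alpha(\sigma,u,Q)$ noted in Section~\ref{section:applications}, together with the equivalence $1<p<q<\infty \iff 1<q'<p'<\infty$, to apply Theorem~\ref{maximalbound} again, now with exponents $(q',p')$ and the pair $(\sigma,u)$ (which plays the role of $(u,\sigma)$ there). Since we assume $u\in A_\infty$, this gives the corresponding estimates in terms of $[\sigma,u]_{A_{q',p'}^\alpha(\sigma,u)A^{\exp}_\infty(u)^{1/p'}}$ and $[\sigma,u]_{A_{q',p'}^\alpha}[u]_{A^M_\infty}^{1/p'}$.

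Summing the two one-supremum bounds yields \eqref{eqn:strongRP1} immediately. For \eqref{eqn:strongRP2} I would factor out the common $A_{p,q}^\alpha$ constant using $[u,\sigma]_{A_{p,q}^\alpha}=[\sigma,u]_{A_{q',p'}^\alpha}$, which follows from the symmetry of the functional, to arrive at $[u,\sigma]_{A_{p,q}^\alpha}\bigl([\sigma]_{A^M_\infty}^{1/q}+[u]_{A^M_\infty}^{1/p'}\bigr)$. There is no real obstacle here; the argument is a bookkeeping exercise in plugging the two maximal-operator estimates into Theorem~\ref{thm:main} and keeping track of which weight plays the $A_\infty$ role in each application. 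All the substantive work is carried out in the proofs of Theorems~\ref{thm:main} and~\ref{maximalbound}.
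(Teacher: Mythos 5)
Your proposal is correct and follows exactly the route the paper intends: the authors state that Theorem~\ref{thm:onewt-strong} is an immediate consequence of Theorems~\ref{thm:main} and~\ref{maximalbound}, and your write-up simply spells out that bookkeeping, including the crucial duality step of applying Theorem~\ref{maximalbound} with exponents $(q',p')$ to the pair $(\sigma,u)$ and using the symmetry $[u,\sigma]_{A^\alpha_{p,q}}=[\sigma,u]_{A^\alpha_{q',p'}}$ to collect terms for \eqref{eqn:strongRP2}.
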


\begin{remark}
We proved inequalities~\eqref{eqn:weakRP2} and ~\eqref{eqn:strongRP2}
in~\cite{cruz-moen-2012} using a more complicated corona decomposition
argument.  Moreover, we only obtained results for $p$
and $q$ that satisfy the Sobolev relation.  
\end{remark}

Theorems~\ref{thm:onewt-weak} and~\ref{thm:onewt-strong} can be
thought of as generalizing one weight inequalities for the Riesz
potential.  The classical one weight norm inequalities for Riesz
potentials due to Muckenhoupt and Wheeden~\cite{muckenhoupt-wheeden74}
were for the case when $p$ and $q$ satisfy the Sobolev relation, and
there exists a weight $w$ such that $u=w^q$ and $\sigma=w^{-p'}$.  In
this case it follows from the $A_{p,q}^\alpha$ condition that both
$u$ and $\sigma$  are in $A_\infty$.   In this case we can
restate~\eqref{eqn:strongRP2} as 
\[ \|I_\alpha\|_{L^p(w^p)\ra L^q(w^q)} \lesssim
[w^q]_{A_{s(p)}}^{\frac1q}\big(
[w^q]_{A^M_\infty}^{\frac1{p'}}+[w^{-p'}]_{A^M_\infty}^{\frac1q} \big), \]
where $s(p) = 1+p/q'$ and we say that a weight $v$ is in the
Muckenhoupt class $A_p$ if 
\[  [v]_{A_p} = \sup_Q A_p(v,Q) = \sup_Q \dashint_Q v\,dx
\left(\dashint_Q v^{1-p'}\,dx\right)^{p-1} < \infty. \]
By interpolation we can give a result that is
in some sense an improvement of this inequality.  We again defer the proof to
Section~\ref{section:other-proofs}.

\begin{theorem} \label{thm:red-thm}
Given $0<\alpha<n$ and $1<p<n/\alpha$, define $q$ by
$1/p-1/q=\alpha/n$.  If $w^q\in A_r$ for some $r<s(p)=1+p/q'$, then
\begin{equation} \label{eqn:red-thm1}
\|I_\alpha\|_{L^p(w^p)\ra L^q(w^q)} \lesssim
[w^q]_{A_r}^{\frac1q} [w^q]_{A^M_\infty}^{\frac1{p'}}.
\end{equation}
\end{theorem}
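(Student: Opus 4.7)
The plan is to prove Theorem~\ref{thm:red-thm} by Marcinkiewicz-style interpolation with change of measure, applied to two weak-type estimates from Theorem~\ref{thm:onewt-weak} at pairs $(p_0,q_0)$ and $(p_1,q_1)$ straddling $(p,q)$ and satisfying the Sobolev relation $1/p_i-1/q_i=\alpha/n$.

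Setting up the endpoints uses the strict inequality $r<s(p)$. For $p_1>p$ I would invoke the reverse H\"older inequality for the $A_r$ weight $w^q$: there is $\epsilon>0$ such that $w^{q(1+\epsilon)}\in A_r$, and letting $q_1=q(1+\epsilon)$ with Sobolev partner $p_1$ gives $w^{q_1}\in A_r\subseteq A_{s(p_1)}$ (since $s(p_1)>s(p)>r$). For $p_0<p$ sufficiently close to $p$ I would apply the Jensen-based power inequality $[v^t]_{A_{1+t(r-1)}}\leq[v]_{A_r}^t$ (valid for $0<t\leq 1$) with $v=w^q$ and $t=q_0/q<1$; the strict inequality $r<s(p)$ is precisely what allows $1+(q_0/q)(r-1)\leq s(p_0)$, placing $w^{q_0}\in A_{s(p_0)}$ with $[w^{q_0}]_{A_{s(p_0)}}\leq[w^q]_{A_r}^{q_0/q}$.

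Theorem~\ref{thm:onewt-weak} then gives, at each endpoint,
\[
\|I_\alpha\|_{L^{p_i}(w^{p_i})\to L^{q_i,\infty}(w^{q_i})}\lesssim [w^{q_i}]_{A_{s(p_i)}}^{1/q_i}[w^{q_i}]_{A^M_\infty}^{1/p_i'},\qquad i=0,1,
\]
and Marcinkiewicz interpolation with change of measure---with parameter $\theta\in(0,1)$ chosen so $1/p=(1-\theta)/p_0+\theta/p_1$ (whence $1/q=(1-\theta)/q_0+\theta/q_1$ by the Sobolev relation, and the endpoint measures $w^{p_i}\,dx$, $w^{q_i}\,dx$ interpolate to exactly $w^p\,dx$, $w^q\,dx$)---yields the strong-type bound at $(p,q)$.

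The main obstacle will be tracking constants in the interpolation. Combining $[w^{q_i}]_{A_{s(p_i)}}^{1/q_i}\leq [w^q]_{A_r}^{1/q}$ at each endpoint with the Marcinkiewicz identity $(1-\theta)/p_0'+\theta/p_1'=1/p'$, one recovers the desired bound $[w^q]_{A_r}^{1/q}[w^q]_{A^M_\infty}^{1/p'}$ provided the endpoint $A_\infty^M$ factors satisfy $[w^{q_i}]_{A^M_\infty}\lesssim[w^q]_{A^M_\infty}$ with constants depending only on $p,q,\alpha,n$. This reduces to a continuity/scaling estimate for the Fujii--Wilson $A_\infty$ constant under small perturbations $v\mapsto v^t$ with $t$ near $1$, obtainable by passing through the equivalent exponential functional (where Jensen gives $[v^t]_{A^{\exp}_\infty}\leq[v]_{A^{\exp}_\infty}^t$ for $0<t\leq 1$) and using the paper's equivalence $[v]_{A^M_\infty}\lesssim[v]_{A^{\exp}_\infty}$.
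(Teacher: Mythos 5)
Your overall strategy---apply Stein--Weiss interpolation with change of measure to two weak-type estimates from Theorem~\ref{thm:onewt-weak} at Sobolev pairs $(p_0,q_0)$, $(p_1,q_1)$ straddling $(p,q)$---is the same as the paper's. The difference is in the choice of endpoint weights, and that difference creates a genuine gap.

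You keep the single weight $w$ at both endpoints, so the target-side $A_\infty$ constants are $[w^{q_0}]_{A^M_\infty}$ and $[w^{q_1}]_{A^M_\infty}$, and you must show both are $\lesssim [w^q]_{A^M_\infty}$. Your proposed route through the exponential functional cannot do this. The chain $[w^{q_0}]_{A^M_\infty}\lesssim[w^{q_0}]_{A^{\exp}_\infty}\leq[w^q]_{A^{\exp}_\infty}^{q_0/q}$ terminates with $[w^q]_{A^{\exp}_\infty}$, and the reverse inequality $[w^q]_{A^{\exp}_\infty}\lesssim[w^q]_{A^M_\infty}$ is \emph{false} in general (the paper cites examples where $A^M_\infty$ is exponentially smaller). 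So at best you would prove the theorem with $A^{\exp}_\infty$ in place of $A^M_\infty$, which is a strictly weaker statement and defeats the point. Worse, at the upper endpoint you have $t=q_1/q>1$ and Jensen's inequality for the exponential functional runs in the wrong direction, so not even the $A^{\exp}_\infty$ bound is available there. A secondary issue: your reverse H\"older exponent $\epsilon$ depends on $[w^q]_{A_\infty}$, so $p_1,q_1$ and hence $\theta$ become weight-dependent, which complicates tracking the interpolation constant.

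The paper sidesteps all of this with a cleaner choice. Take $q_0$ with $r=s(p_0)=q_0(1-\alpha/n)$ (possible precisely because $r<s(p)$), set $q_1$ via $\frac{1/2}{q_0}+\frac{1/2}{q_1}=\frac1q$ so $\theta=1/2$ is fixed, and use auxiliary weights $w_i=w^{q/q_i}$. Then $w_i^{q_i}=w^q$ \emph{identically}, so both endpoint constants are literally $[w^q]_{A_{s(p_i)}}^{1/q_i}[w^q]_{A^M_\infty}^{1/p_i'}$ with no loss, and $[w^q]_{A_{s(p_0)}}=[w^q]_{A_r}$ exactly while $[w^q]_{A_{s(p_1)}}\leq[w^q]_{A_r}$ by nesting. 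The source measures $w_i^{p_i}=w^{qp_i/q_i}$ are no longer $w^{p_i}$, but the Stein--Weiss change-of-measure algebra with $\theta=1/2$ still recovers $L^p(w^p)\to L^q(w^q)$, and the exponents combine to $[w^q]_{A_r}^{\frac12(1/q_0+1/q_1)}[w^q]_{A^M_\infty}^{\frac12(1/p_0'+1/p_1')}=[w^q]_{A_r}^{1/q}[w^q]_{A^M_\infty}^{1/p'}$. You should adopt this device: the whole point is to arrange matters so the $A^M_\infty$ functional, which lacks good behavior under $v\mapsto v^t$, never has to be perturbed.
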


The corresponding result for singular integrals was proved
in~\cite{lerner-moenP}.  A weaker version of
Theorem~\ref{thm:red-thm}, with the assumption  $w^q \in A_r$
replaced by the assumption that $w^q \in A_1$, was recently proved by
Recchi \cite{Recchi}.

In~\cite{lerner-moenP} it was conjectured that 
for singular integrals, the one supremum estimates corresponding
to~\eqref{eqn:weakRP1} and~\eqref{eqn:strongRP1} could be improved by
replacing $A^{\exp}_\infty(\sigma,Q)$ on the right-hand side 
with the smaller quantity $A^M_\infty(\sigma,Q)$.   They were able to prove a partial result involving
an additional log term.

We believe that the corresponding conjecture is true for Riesz
potentials.   We can prove a partial result in the classical one
weight case.   To state it we define the one supremum constant needed in this case, for a general weight:
\[ [w]_{(A_{p})^{\beta} (A^M_\infty)^{\gamma}} = \sup_Q  A_{p}(w,Q)^{\beta}
A^M_\infty(w,Q)^{\gamma}. \]

\begin{theorem} \label{thm:Mlog-Ainfty}
Given $0<\alpha<n$ and $1<p<n/\alpha$, define $q$ by
$1/p-~1/q~=~\alpha/n$.  If $w^q\in A_{s(p)}$, $s(p)=1+p/q'$ and $\sigma=w^{-p'}$, then
\begin{equation} \label{Meqn:log-Ainfty}
\|M_\alpha\|_{L^p(w^p) \ra L^{q}(w^q)}
\lesssim \Phi\big([w^{-p'}]_{A_{s(q')}}\big)^{\frac1q}
[w^{-p'}]_{(A_{s(q')})^{\frac{1}{p'}}(A^M_\infty)^{\frac1q}}, 
\end{equation}
where $\Phi(t)=1+\log(t)$.
\end{theorem}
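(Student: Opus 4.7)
The plan is to reduce to Sawyer's testing characterization of $M_\alpha$ and then sharpen the principal-cube argument behind Theorem~\ref{maximalbound} so that the Fujii--Wilson constant enters pointwise at each stopping cube, following the scheme used by Lerner and the second author in~\cite{lerner-moenP} for singular integrals.

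The substitution $f = g\sigma$ with $u = w^q$ and $\sigma = w^{-p'}$ recasts the inequality as
\begin{equation*}
\|M_\alpha(\,\cdot\,\sigma)\|_{L^p(\sigma)\to L^q(u)} \lesssim \Phi([\sigma]_{A_{s(q')}})^{1/q}\,[\sigma]_{(A_{s(q')})^{1/p'}(A^M_\infty)^{1/q}}.
\end{equation*}
Under the Sobolev relation $\alpha/n = 1/p - 1/q$, the pointwise identity $A_{p,q}^\alpha(u,\sigma,Q)^{p'} = A_{s(q')}(\sigma,Q)$ holds, so the right-hand side is expressible entirely in terms of $\sigma$. By Sawyer's testing theorem for $M_\alpha$, it then suffices to prove that for every cube $Q_0$,
\begin{equation*}
\int_{Q_0} M_\alpha(\sigma\chi_{Q_0})^q\,u\,dx \lesssim \Phi([\sigma]_{A_{s(q')}})\,[\sigma]_{(A_{s(q')})^{1/p'}(A^M_\infty)^{1/q}}^q\,\sigma(Q_0)^{q/p}.
\end{equation*}

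Running the standard principal-cube stopping time on $Q_0$ produces a $\sigma$-sparse family $\mathcal{S}\subset\D(Q_0)$ with disjoint major subsets $E_Q\subset Q$ satisfying $\sigma(E_Q)\ge\tfrac12\sigma(Q)$, together with the pointwise bound $M_\alpha(\sigma\chi_{Q_0})(x)\lesssim\sum_{Q\in\mathcal{S}}|Q|^{\alpha/n}(\dashint_Q\sigma)\chi_Q(x)$. Combining this with the Sobolev identity $(|Q|^{\alpha/n}\dashint_Q\sigma)^q u(Q) = A_{s(q')}(\sigma,Q)^{q/p'}\sigma(Q)^{q/p}$ reduces the task to bounding
\begin{equation*}
\sum_{Q\in\mathcal{S}} A_{s(q')}(\sigma,Q)^{q/p'}\sigma(Q)^{q/p}
\end{equation*}
by the right-hand side above. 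To pull the $A^M_\infty$ constant inside a single supremum, partition $\mathcal{S}$ into the dyadic level sets $\mathcal{S}_k = \{Q\in\mathcal{S}:2^k\le A^M_\infty(\sigma,Q)<2^{k+1}\}$ for $0\le k\le K\simeq 1+\log[\sigma]_{A^M_\infty}$. On $\mathcal{S}_k$ the one-supremum constraint gives $A_{s(q')}(\sigma,Q)^{q/p'}\le 2^{-k}[\sigma]_{(A_{s(q')})^{1/p'}(A^M_\infty)^{1/q}}^q$, and a secondary sparse/Carleson-type estimate restricted to $\mathcal{S}_k$---which exploits the Fujii--Wilson identity $\int_Q M(\sigma\chi_Q)\,dx = A^M_\infty(\sigma,Q)\sigma(Q)\le 2^{k+1}\sigma(Q)$---yields $\sum_{\mathcal{S}_k}\sigma(Q)^{q/p}\lesssim 2^k\sigma(Q_0)^{q/p}$. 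The $2^{\pm k}$ factors balance at each level, and summing the $K+1$ levels produces the claimed bound with the overhead $K\lesssim\Phi([\sigma]_{A^M_\infty})\le\Phi([\sigma]_{A_{s(q')}})$.

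The main obstacle is the sharpness of the level-$k$ sparse estimate: removing the factor $2^k$ at each level would eliminate the $\Phi$ factor entirely and settle the conjectured one-supremum bound with $A^M_\infty$ in place of $A^{\exp}_\infty$. This loss is the precise deficit of the present method, and the corresponding obstruction is the open conjecture for Calder\'on--Zygmund singular integrals in~\cite{lerner-moenP}.
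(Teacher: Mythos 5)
Your proposal takes a genuinely different route from the paper. The paper invokes the testing result of~\cite{MR2534183}, which bounds $\|M_\alpha(\,\cdot\,\sigma)\|_{L^p(\sigma)\to L^q(u)}$ by the testing constant of the \emph{non-fractional} maximal operator raised to the power $s(p)$; after a sparse reduction this leads to a sum $\sum_{Q\in\mathcal S}A_{s(q')}(\sigma,Q)^{q/p'}\sigma(Q)$ with $\sigma(Q)$ to the first power, and the stratification is by the two-weight quantity $A_{s(q')}(\sigma,Q)\in(2^a,2^{a+1}]$, which has $\lesssim\log[\sigma]_{A_{s(q')}}$ nonempty levels because $A_{s(q')}(\sigma,Q)\ge 1$. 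You instead use Sawyer's $M_\alpha$-testing at power $q$, which produces the sum $\sum_{Q}A_{s(q')}(\sigma,Q)^{q/p'}\sigma(Q)^{q/p}$, and stratify by $A^M_\infty(\sigma,Q)\in[2^k,2^{k+1})$, using the one-supremum constraint to extract a $2^{-k}$ and the Fujii--Wilson estimate to produce a compensating $2^k$. Both strategies are viable and both use the Fujii--Wilson characterization, but in structurally different places; your version could in principle give the marginally sharper overhead $\Phi([\sigma]_{A^M_\infty})$ in place of $\Phi([\sigma]_{A_{s(q')}})$.

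There is, however, an internal inconsistency you should have caught, because you flag its consequence yourself at the end. You assert that the principal-cube family $\mathcal S$ is \emph{$\sigma$-sparse}, i.e.\ $\sigma(E_Q)\ge\tfrac12\sigma(Q)$ with $E_Q$ disjoint. If that were so, then for each $k$ one would have directly
\[
\sum_{Q\in\mathcal S_k}\sigma(Q)^{q/p}
\le \sigma(Q_0)^{q/p-1}\sum_{Q\in\mathcal S_k}\sigma(Q)
\le 2\,\sigma(Q_0)^{q/p-1}\sum_{Q\in\mathcal S_k}\sigma(E_Q)
\le 2\,\sigma(Q_0)^{q/p},
\]
with \emph{no} factor $2^k$ and no use of Fujii--Wilson at all; combined with $A_{s(q')}(\sigma,Q)^{q/p'}\le 2^{-k}[\sigma]^q_{(A_{s(q')})^{1/p'}(A^M_\infty)^{1/q}}$ the $k$-sum would then be a convergent geometric series and you would have eliminated $\Phi$ entirely --- i.e.\ proved the open one-supremum conjecture. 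Since you correctly observe that removing the $2^k$ would do exactly that, the $\sigma$-sparsity claim cannot be right. The correct setting is a \emph{Lebesgue}-sparse family $\mathcal S$ with $|E_Q|\ge\tfrac12|Q|$ and the pointwise domination $M_\alpha^\D(\sigma\chi_{Q_0})\lesssim L_\alpha^{\mathcal S}(\sigma\chi_{Q_0})$ (supported on the disjoint sets $E_Q$, as in Theorem~\ref{thm:sparse-max}); then $\sigma(Q)\le 2\langle\sigma\rangle_Q|E_Q|$ and the inner sum over maximal cubes in $\mathcal S_k$ is controlled by $\int_R M(\sigma\chi_R)\,dx = A^M_\infty(\sigma,R)\sigma(R)\le 2^{k+1}\sigma(R)$, which genuinely produces the $2^k$. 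With that correction your outline closes.
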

The proof of Theorem \ref{thm:Mlog-Ainfty} requires a testing
condition for the fractional maximal function in \cite{MR2534183}.
Using this condition it is very similar to the argument
in~\cite{lerner-moenP}.  We sketch the details of the proof in
Section~\ref{section:other-proofs}.  Once again, as a consequence of
Theorem~\ref{thm:main} we have the following result for the Riesz
potential.

\begin{theorem} \label{thm:intlog-Ainfty}
Given $0<\alpha<n$ and $1<p<n/\alpha$, define $q$ by
$1/p-1/q=\alpha/n$.  If $w^q\in A_{s(p)}$, $s(p)=1+p/q'$, then
\begin{equation} \label{eqn:log-Ainfty1}
\|I_\alpha\|_{L^p(w^p) \ra L^{q,\infty}(w^q)}
\lesssim \Phi\big([w^q]_{A_{s(p)}}\big)^{\frac1{p'}} [w^q]_{(A_{s(p)})^{\frac1q}  (A^M_\infty)^{\frac1{p'}}}
\end{equation}
and
\begin{multline} \label{eqn:log-Ainfty2}
\|I_\alpha\|_{L^p(w^p) \ra L^{q}(w^q)}
\lesssim \Phi\big([w^q]_{A_{s(p)}}\big)^{\frac1{p'}}  [w^q]_{(A_{s(p)})^{\frac1q}  (A^M_\infty)^{\frac1{p'}}} \\ +
\Phi\big([w^{-p'}]_{A_{s(q')}}\big)^{\frac1q}[w^{-p'}]_{(A_{s(q')})^{\frac1{p'}}(A^M_\infty)^{\frac1q}},
\end{multline}
where $\Phi(t)=1+\log(t)$.
\end{theorem}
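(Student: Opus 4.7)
The plan is to derive both estimates directly from Theorem~\ref{thm:main} and Theorem~\ref{thm:Mlog-Ainfty}. In the classical one-weight setting $u = w^q$, $\sigma = w^{-p'}$; the Sobolev relation $1/p - 1/q = \alpha/n$ makes the $|Q|^{\alpha/n + 1/q - 1/p}$ normalization trivial, and a routine change of variables $f \mapsto f u$ together with the substitution $\tilde w = w^{-1}$ gives
\[ \|M_\alpha(\,\cdot\, u)\|_{L^{q'}(u)\ra L^{p'}(\sigma)} = \|M_\alpha\|_{L^{q'}(\tilde w^{q'})\ra L^{p'}(\tilde w^{p'})}, \]
while $\|M_\alpha(\,\cdot\,\sigma)\|_{L^p(\sigma)\ra L^q(u)} = \|M_\alpha\|_{L^p(w^p)\ra L^q(w^q)}$ and similarly for the $I_\alpha$ norms. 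Since $1 < q' < p' < \infty$ and $1/q' - 1/p' = \alpha/n$, the pair $(q',p')$ is admissible in Theorem~\ref{thm:Mlog-Ainfty}.

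For~\eqref{eqn:log-Ainfty1}, the weak-type half of Theorem~\ref{thm:main} reduces matters to bounding the second norm above. I apply Theorem~\ref{thm:Mlog-Ainfty} to $\tilde w$ with $(p,q)$ replaced by $(q',p')$, and use $\tilde w^{-(q')'} = \tilde w^{-q} = w^q$ together with $s((q')') = s(p)$. The resulting output is exactly $\Phi([w^q]_{A_{s(p)}})^{1/p'}[w^q]_{(A_{s(p)})^{1/q}(A^M_\infty)^{1/p'}}$, which is~\eqref{eqn:log-Ainfty1}.

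For~\eqref{eqn:log-Ainfty2}, the strong-type half of Theorem~\ref{thm:main} produces the sum of the two maximal norms above. The $\|M_\alpha(\,\cdot\, u)\|_{L^{q'}(u)\ra L^{p'}(\sigma)}$ term is bounded exactly as in the weak-type case. The $\|M_\alpha(\,\cdot\,\sigma)\|_{L^p(\sigma)\ra L^q(u)}$ term is controlled by applying Theorem~\ref{thm:Mlog-Ainfty} directly to $w$ with exponents $(p,q)$, which produces the $\Phi([w^{-p'}]_{A_{s(q')}})^{1/q}[w^{-p'}]_{(A_{s(q')})^{1/p'}(A^M_\infty)^{1/q}}$ summand.

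There is no substantive obstacle beyond bookkeeping of exponents; the whole argument is a transcription via Theorem~\ref{thm:main} once Theorem~\ref{thm:Mlog-Ainfty} is in hand. The one point requiring care is tracking how the duality substitution $w \mapsto \tilde w = w^{-1}$ swaps the roles of $(w^q, A_{s(p)})$ and $(w^{-p'}, A_{s(q')})$, which is precisely why the two summands of~\eqref{eqn:log-Ainfty2} arise from applying Theorem~\ref{thm:Mlog-Ainfty} to two different weights.
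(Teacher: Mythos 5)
Your proof is correct and follows exactly the route the paper intends: Theorem~\ref{thm:intlog-Ainfty} is stated there as an immediate consequence of Theorem~\ref{thm:main} applied to Theorem~\ref{thm:Mlog-Ainfty}, and your change of variables $f\mapsto fu$ (resp.\ $f\mapsto f\sigma$) and the duality substitution $\tilde w=w^{-1}$ make the bookkeeping explicit, correctly tracking that $\tilde w^{-q}=w^q$ and that $s(q')$ turns into $s(p)$ under the swap $(p,q)\mapsto(q',p')$. Only a notational slip: you wrote ``$s((q')')=s(p)$'' where the intended identity is that the $s(q')$ appearing in Theorem~\ref{thm:Mlog-Ainfty} becomes $s\big((p')'\big)=s(p)$ after the substitution; the conclusion you draw is the right one.
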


\section{Two weight inequalities via $A_p$ bump conditions}
\label{section:apps2}

If we do not assume that $u,\,\sigma \in A_\infty$, then the
$A_{p,q}^\alpha$ condition is no longer sufficient for the strong type
inequality for the fractional maximal operators or for the Riesz
potentials.    The construction is deferred until Section~\ref{section:op2}.

\begin{example} \label{example:bad-wt}
Given $0<\alpha<n$ and $1<p\leq q < \infty$, 
there exists a pair of weights $(u,\sigma)\in A^\alpha_{p,q}$ and a
function $f\in L^p(\sigma)$ such that $M_\alpha(f\sigma)\not\in L^q(u)$. 
\end{example}

\begin{remark}
A similar example for the Hardy-Littlewood maximal operator (i.e.,
when $\alpha=0$) was constructed by Muckenhoupt and
Wheeden~\cite{muckenhoupt-wheeden76}.   While the existence of
Example~\ref{example:bad-wt}  is part of the folklore of harmonic analysis, to the
best of our knowledge one has never been published.  It is worth
noting that our example is considerably different from the one
constructed by Muckenhoupt and Wheeden.
\end{remark}

It is possible, however, to replace the $A^\alpha_{p,q}$ condition
with a stronger one defined using Orlicz norms.  This approach to
weighted norm inequalities is due to P\'erez~\cite{perez94,perez95}
and was motivated by the original Muckenhoupt-Wheeden conjectures.

To state these results we need to make some preliminary definitions.
(For further information, see~\cite[Section~5.2]{MR2797562}.)  A
Young function is a function $\Phi : [0,\infty) \rightarrow
[0,\infty)$ that is continuous, convex and strictly increasing,
$\Phi(0)~=~0$ and $\Phi(t)/t\rightarrow \infty$ as $t\rightarrow
\infty$. Define the localized Luxemburg average of $f$ over a cube $Q$ by
\[  \|f\|_{\Phi,Q} = 
\inf \left\{ \lambda > 0 : \dashint_Q \Phi\left(\frac{|f(x)|}{\lambda}\right)dx \leq 1 \right\}.  \]
When $\Phi(t)=t^p$, $1<p<\infty$, this becomes the $L^p$ norm and we write
$\|f\|_{\Phi,Q}=\|f\|_{p,Q}$.  
The associate function of $\Phi$ is the Young function
\[ \bar{\Phi}(t) = \sup_{s>0}\{ st - \Phi(s)\}. \]
Note that $\bar{\bar{\Phi}}=\Phi$.
A Young function $\Phi$ satisfies the $B_p$ condition if for some $c>0$,
\[ \int_c^\infty \frac{\Phi(t)}{t^p} \frac{dt}{t} < \infty. \]
Important examples of such functions are $\Phi(t)=t^{(rp')'}$, $r>1$, whose
associate function is $\bar{\Phi}(t)=t^{rp'}$, and
$\Phi(t)=t^{p}\log(e+t)^{-1-\epsilon}$, $\epsilon>0$, which have
associate functions $\bar{\Phi}(t)\simeq
t^{p'}\log(e+t)^{p'-1+\delta}$, $\delta>0$.  We refer to these
associate functions as power bumps and log bumps.  The $B_p$ condition
is important because it characterizes the $L^p$ boundedness of Orlicz
maximal operators, which in turn can be used to prove two weight
inequalities.  Define
\[ M_\Phi f(x) = \sup_{Q\ni x} \|f\|_{\Phi,Q}; \] 
then P\'erez \cite{MR1327936} showed that $M_\Phi$ is bounded on
$L^p(\R^n)$ if and only if $\Phi\in B_p$, and 
$$\|M_\Phi\|_{L^p\ra L^p}\lesssim 
 \left(\int^\infty_c\frac{\Phi(t)}{t^p}\frac{dt}{t}\right)^{1/p}.$$

For our results we need to generalize this to the fractional Orlicz
maximal operator.   Given $0<\alpha<n$ and a Young function $\Phi$, define
$$M_{\alpha,\Phi} f(x)=\sup_{Q\ni x}
|Q|^{\frac{\alpha}{n}}\|f\|_{\Phi,Q}.$$
We define the associated fractional $B_p$ condition as follows: given
$1<~p~<~n/\alpha$, let $1/q=1/p-\alpha/n$.  Then $\Phi \in
B^\alpha_p$ if
\[ \int_c^\infty \frac{\Phi(t)^{q/p}}{t^q}\frac{dt}{t}< \infty. \]
We prove the following result in Section~\ref{section:op2}.  

\begin{theorem} \label{thm:Mfracorlicz} Given $0< \alpha <n$ and 
  $1<p<{n}/{\alpha}$, define $1/q=1/p-\alpha/n$.
  Then for any  $\Phi\in B^\alpha_p$, $M_{\alpha,\Phi}:L^p(\R^n) \ra
  L^q(\R^n)$ and
\begin{equation}\label{eqn:fracorlicz}
\|M_{\alpha,\Phi}\|_{L^p\ra L^q} \lesssim
\left(\int^\infty_c\frac{\Phi(t)^{\frac{q}{p}}}{t^q}\frac{dt}{t}\right)^{\frac{1}{q}}.
\end{equation}
\end{theorem}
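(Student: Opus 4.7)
The strategy is to adapt the distribution-function argument P\'erez used to prove $\|M_\Phi\|_{L^p\to L^p} \lesssim (\int_c^\infty \Phi(t)/t^p\,dt/t)^{1/p}$ for $\Phi \in B_p$ (see \cite{MR1327936}) to the fractional setting, using Jensen's inequality to convert the classical $B_p$ integrand into the fractional $B_p^\alpha$ integrand.

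First, by the standard shifted-dyadic-grid trick, it is enough to prove the bound for a dyadic version $M^d_{\alpha,\Phi}$. The layer-cake formula gives
\[ \|M^d_{\alpha,\Phi}f\|_q^q = q\int_0^\infty \lambda^{q-1}|E_\lambda|\,d\lambda, \qquad E_\lambda=\{M^d_{\alpha,\Phi}f>\lambda\}, \]
and for each $\lambda>0$, a Calder\'on-Zygmund decomposition produces pairwise disjoint maximal dyadic cubes $\{Q_j^\lambda\}$ with $|Q_j^\lambda|^{\alpha/n}\|f\|_{\Phi,Q_j^\lambda}>\lambda$ and $E_\lambda=\bigsqcup_j Q_j^\lambda$. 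The Luxemburg definition forces $\dashint_{Q_j^\lambda}\Phi(|f|\,|Q_j^\lambda|^{\alpha/n}/\lambda)>1$, and applying Jensen's inequality to the convex map $t\mapsto t^{q/p}$ (convex since $q>p$) strengthens this to
\[ |Q_j^\lambda| \leq \int_{Q_j^\lambda}\Phi\!\left(\frac{|f(y)|\,|Q_j^\lambda|^{\alpha/n}}{\lambda}\right)^{\!q/p}dy, \]
which is exactly the place where the $q/p$ power enters and matches the $B_p^\alpha$ integrand.

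Summing over $j$ by disjointness, interchanging the order of integration, and, for each fixed $y$, parametrizing the $\lambda$-integral by the nested trajectory of dyadic stopping cubes $Q_0(y) \subsetneq Q_1(y) \subsetneq \cdots$ containing $y$, I would perform the change of variables $s=|f(y)|\,|Q_k|^{\alpha/n}/\lambda$ on each piece where the stopping cube is fixed. On each such piece, $\lambda^{q-1}\,d\lambda$ becomes $(|f(y)||Q_k|^{\alpha/n})^q\,\Phi(s)^{q/p}\,s^{-q}\,ds/s$, and the resulting $s$-ranges for consecutive cubes in the trajectory are disjoint by the stopping relation $|Q_k|^{\alpha/n}\|f\|_{\Phi,Q_k}\simeq\lambda$ at the endpoints. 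Summing produces the $B_p^\alpha$ integrand and, after the outer integration in $y$, the desired estimate
\[ \|M^d_{\alpha,\Phi}f\|_q^q \lesssim \left(\int_c^\infty \frac{\Phi(t)^{q/p}}{t^q}\,\frac{dt}{t}\right) \|f\|_p^q. \]

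The main obstacle is the precise bookkeeping in the change-of-variables step, since the scale factor $|Q^\lambda(y)|^{\alpha/n}$ jumps discretely with $\lambda$. The crucial observation that makes the argument go through is that the upper endpoint of the $s$-interval for $Q_k(y)$ equals $(|Q_k|/|Q_{k+1}|)^{\alpha/n}$ times the lower endpoint of the interval for $Q_{k+1}(y)$, so the intervals are ordered and essentially tile a ray in $s$, while the prefactor $(|f(y)||Q_k|^{\alpha/n})^q = s_-^{(k)q}v_{Q_k}^q$ can be absorbed into the integrand using the stopping relation. This is the Orlicz analogue of the Hedberg-type estimate underlying the classical $L^p\to L^q$ boundedness of $M_\alpha$, and once the trajectory is set up correctly the summation collapses cleanly to the stated bound.
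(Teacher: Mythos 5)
Your opening moves are sound and do capture part of the spirit of the argument: the reduction to a dyadic grid, the layer-cake formula, the Calder\'on--Zygmund stopping cubes with $\dashint_{Q_j^\lambda}\Phi\big(|f|\,|Q_j^\lambda|^{\alpha/n}/\lambda\big)>1$, and Jensen's inequality for the convex map $t\mapsto t^{q/p}$, which gives $|Q_j^\lambda|\leq\int_{Q_j^\lambda}\Phi(\cdot)^{q/p}\,dy$ and correctly injects the $B_p^\alpha$ integrand. The difficulty is the final ``absorption of the prefactor,'' which you flag as the main obstacle but do not actually resolve — and I do not believe it resolves in the way you suggest. After interchanging the integrals and changing variables along the trajectory $Q_0(y)\subsetneq Q_1(y)\subsetneq\cdots$, you are left, for each $y$, with
\[
\sum_k\big(|f(y)|\,|Q_k(y)|^{\alpha/n}\big)^q\int_{s_k^-}^{s_k^+}\frac{\Phi(s)^{q/p}}{s^q}\,\frac{ds}{s},
\]
and the prefactor $|Q_k(y)|^{q\alpha/n}$ is unbounded along the trajectory. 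No application of the stopping relation reduces it to a $k$-independent quantity times $|f(y)|^p$. More fundamentally, the target $\|M^d_{\alpha,\Phi}f\|_q^q\lesssim(\cdots)\|f\|_p^q$ has $\|f\|_p^q=\big(\int|f|^p\,dy\big)^{q/p}$, a $q/p$-th power of an integral, and that cannot emerge from a single integration in $y$ at the end. When $\alpha=0$ (so $p=q$), P\'erez's argument closes precisely because there is no cube-dependent scale factor and the exponent on $|f(y)|$ matches $p$; for $\alpha>0$ and $q>p$ an extra device is required to raise the $y$-integral to the power $q/p$.

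The paper supplies exactly these two missing devices. It first uses the rescaling identity $M_{\alpha,\Phi}f(x)^q=M_{p\alpha,\Phi_p}(f^p)(x)^{q/p}$ with $\Phi_p(t)=\Phi(t^{1/p})$, which removes the $k$-dependent scale factor from the change of variables. It then invokes a distributional weak-type estimate for the rescaled operator of the form $|\{M_{p\alpha,\Phi_p}g>t\}|^{p/q}\lesssim\int_{\{g>t/c\}}\Phi_p(g/t)\,dx$ (this is where your CZ/Jensen content lives, already packaged), and finally uses the layer-cake formula for the $L^{q/p}$ ``norm'' together with \emph{Minkowski's integral inequality} to pass the $(\cdot)^{q/p}$ outside the $x$-integral; the change of variables $t\mapsto(f(x)/s)^p$ then produces the $B_p^\alpha$ integrand times $\|f\|_p^q$. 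Without the rescaling and without a Minkowski-type step, your stopping-time computation does not assemble into the stated bound; if you try to patch it with a Hedberg split instead, you end up requiring $\Phi\in B_p$ rather than the weaker $B_p^\alpha$, which would lose the content of the theorem.
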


When $\alpha=0$ the two conditions coincide; if $\alpha>0$ then the
$B_p^\alpha$ condition is weaker. To see this, note that because the measure $\frac{dt}{t}$
on $(0,\infty)$ behaves in some sense like a counting measure, we have
$$\left(\int^\infty_c\frac{\Phi(t)^{\frac{q}{p}}}{t^q}\frac{dt}{t}\right)^{\frac{1}{q}}\lesssim  \left(\int^\infty_c\frac{\Phi(t)}{t^p}\frac{dt}{t}\right)^{1/p}.$$
Moreover, the Young function 
\[ \Phi(t)  = \frac{t^p}{\log(t)^{(1+\epsilon)\frac{p}{q}}} \]
is in $B_p^\alpha$ for any $\epsilon>0$ but is in $B_p$ only if
$\epsilon>q/p-1$.  Hence, $B_p\subsetneq B^\al_p$ if $\al>0$.

To state our results we introduce a new weight condition that 
is stronger than the $[u,\sigma]_{A_{p,q}^\alpha}$ condition,
replacing  the average on $\sigma$ by an Orlicz average:  we say that
$(u,\sigma)\in A_{p,q,\Phi}^\alpha$ if
$$[u,\sigma]_{A_{p,q,\Phi}^\alpha}=
\sup_Q
|Q|^{\frac{\al}{n}+\frac1q-\frac1p}\left(\dashint_Qu\,dx\right)^{\frac{1}{q}} 
\|\sigma^{\frac{1}{p'}}\|_{\Phi,Q}<\infty.$$
If we assume that $\Phi$ is such that $t^{p'}\leq C\Phi(ct)$, then $[u,\sigma]_{A_{p,q}^\alpha} \lesssim
[u,\sigma]_{{A_{p,q ,\Phi}^\alpha}}$.    This is always the case if
$\bar{\Phi}\in B_p$.    Note that this new condition lacks the
symmetry of the $A_{p,q}^\alpha$ condition since the Orlicz norm is always
applied to the second weight.  

This condition was introduced by P\'erez~\cite{perez94} (see
also~\cite[Section~5.6]{MR2797562}), who used it to prove strong type,
two weight
norm inequalities for the fractional maximal operator:
\begin{equation}\label{thm:perez-bump}
\|M_\al(\,\cdot\,\sigma)\|_{L^p(\sigma)\ra L^q(u)}
\lesssim [u,\sigma]_{A_{p,q,\Phi}^\alpha}\|M_{\bar\Phi}\|_{L^p\ra
  L^p}.
\end{equation}
When $p>q$ we improve his result both qualitatively and quantitatively, giving a larger class of Young functions and a sharper constant. 

\begin{theorem} \label{thm:Maltwoweight} 
Given $0\leq \al<n$ and
  $1<p\leq q<\infty$, define $\beta=n\big(\frac1p-\frac1q\big)$.  If
$\Bar{\Phi}\in B^\beta_p$ and the pair of weights $(u,\sigma)\in
  A_{p,q,\Phi}^\alpha$, then 
\begin{equation*}
\|M_\al(\,\cdot\,\sigma)\|_{L^p(\sigma)\ra L^q(u)}
\lesssim [u,\sigma]_{A_{p,q,\Phi}^\alpha}
\|M_{\beta,\bar\Phi}\|_{L^p\ra L^q}.
\end{equation*}
\end{theorem}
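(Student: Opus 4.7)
The plan is to adapt P\'erez's classical argument to the off-diagonal, fractional setting, replacing the use of an ordinary Orlicz maximal operator by the fractional Orlicz maximal operator $M_{\beta,\bar\Phi}$, whose boundedness is furnished by Theorem~\ref{thm:Mfracorlicz}. By the standard one-third translate trick it suffices to bound the dyadic analog $M_\al^d(f\sigma)$. Fix $a > 1$ to be chosen, set $\Omega_k = \{M_\al^d(f\sigma) > a^k\}$, and let $\{Q_{k,j}\}_j$ be the maximal dyadic cubes satisfying $|Q|^{\al/n}\dashint_Q f\sigma\,dy > a^k$, so that $\Omega_k = \bigsqcup_j Q_{k,j}$. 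Put $E_{k,j} = Q_{k,j} \setminus \Omega_{k+1}$; these sets are pairwise disjoint, and a routine telescoping yields $\|M_\al^d(f\sigma)\|_{L^q(u)}^q \lesssim \sum_{k,j} a^{kq} u(E_{k,j})$.

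The first key step is a sparseness estimate of the form $|Q_{k,j}| \leq 2|E_{k,j}|$. For each $Q_{k+1,l}$ strictly contained in $Q_{k,j}$, the defining inequality gives $\int_{Q_{k+1,l}} f\sigma > a^{k+1}|Q_{k+1,l}|^{1-\al/n}$, while maximality applied to the dyadic parent of $Q_{k,j}$ yields $\int_{Q_{k,j}} f\sigma \leq 2^{n-\al} a^k |Q_{k,j}|^{1-\al/n}$. Summing over $l$ produces $\sum_l |Q_{k+1,l}|^{1-\al/n} \leq (2^{n-\al}/a)|Q_{k,j}|^{1-\al/n}$, and the subadditivity of $t \mapsto t^{1-\al/n}$ then gives $\sum_l |Q_{k+1,l}| \leq (2^{n-\al}/a)^{n/(n-\al)} |Q_{k,j}|$, which is at most $|Q_{k,j}|/2$ once $a$ is chosen large enough depending on $n$ and $\al$. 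This concavity-driven sparseness is the main technical obstacle: the standard Calder\'on--Zygmund argument in the Hardy--Littlewood case is linear in $|Q|$ and has to be modified when $\al > 0$.

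With sparseness in hand, on each $Q_{k,j}$ the generalized H\"older inequality for Orlicz norms gives
\begin{equation*}
a^k < |Q_{k,j}|^{\al/n}\dashint_{Q_{k,j}} f\sigma^{1/p}\cdot \sigma^{1/p'}\,dy \lesssim |Q_{k,j}|^{\al/n}\|f\sigma^{1/p}\|_{\bar\Phi, Q_{k,j}}\|\sigma^{1/p'}\|_{\Phi, Q_{k,j}},
\end{equation*}
while the $A_{p,q,\Phi}^\al$ hypothesis rearranges as
\begin{equation*}
\|\sigma^{1/p'}\|_{\Phi, Q_{k,j}} \leq [u,\sigma]_{A_{p,q,\Phi}^\al}\,|Q_{k,j}|^{1/p-\al/n}\,u(Q_{k,j})^{-1/q}.
\end{equation*}
Combining these, raising to the $q$-th power, multiplying by $u(E_{k,j})$, and using $u(E_{k,j})/u(Q_{k,j}) \leq 1$ gives
\begin{equation*}
a^{kq} u(E_{k,j}) \lesssim [u,\sigma]_{A_{p,q,\Phi}^\al}^q\,|Q_{k,j}|^{q/p}\,\|f\sigma^{1/p}\|_{\bar\Phi, Q_{k,j}}^q.
\end{equation*}

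The algebraic identity $q/p = 1 + q\beta/n$ then lets us factor $|Q_{k,j}|^{q/p} = |Q_{k,j}|\cdot|Q_{k,j}|^{q\beta/n}$, so the summand becomes $|Q_{k,j}|\bigl(|Q_{k,j}|^{\beta/n}\|f\sigma^{1/p}\|_{\bar\Phi, Q_{k,j}}\bigr)^q$. Coupling the pointwise bound $|Q_{k,j}|^{\beta/n}\|f\sigma^{1/p}\|_{\bar\Phi, Q_{k,j}} \leq M_{\beta,\bar\Phi}(f\sigma^{1/p})(x)$ for $x \in Q_{k,j}$ with sparseness $|Q_{k,j}|\leq 2|E_{k,j}|$ and the disjointness of the $\{E_{k,j}\}$ yields
\begin{equation*}
\sum_{k,j} a^{kq} u(E_{k,j}) \lesssim [u,\sigma]_{A_{p,q,\Phi}^\al}^q \int_{\R^n} M_{\beta,\bar\Phi}(f\sigma^{1/p})(x)^q\,dx.
\end{equation*}
Applying Theorem~\ref{thm:Mfracorlicz}, which invokes the hypothesis $\bar\Phi \in B_p^\beta$, and noting that $\|f\sigma^{1/p}\|_{L^p(\R^n)} = \|f\|_{L^p(\sigma)}$, gives the desired bound $\|M_\al(f\sigma)\|_{L^q(u)} \lesssim [u,\sigma]_{A_{p,q,\Phi}^\al}\,\|M_{\beta,\bar\Phi}\|_{L^p\to L^q}\,\|f\|_{L^p(\sigma)}$.
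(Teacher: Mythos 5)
Your argument is correct and takes essentially the same route as the paper: reduce to the dyadic operator, pass to a sparse decomposition, apply the generalized H\"older inequality to separate $\sigma^{1/p'}$ from $f\sigma^{1/p}$, use the $A_{p,q,\Phi}^\alpha$ constant, factor $|Q|^{q/p}=|Q|\cdot|Q|^{q\beta/n}$, and finish with the $L^p\to L^q$ bound for $M_{\beta,\bar\Phi}$ from Theorem~\ref{thm:Mfracorlicz}. The only cosmetic difference is that you construct the sparse family by hand via the Calder\'on--Zygmund stopping time, proving $|Q_{k,j}|\le 2|E_{k,j}|$ directly from the subadditivity of $t\mapsto t^{1-\alpha/n}$, whereas the paper simply invokes its Theorem~\ref{thm:sparse-max} and works with an arbitrary sparse family from the outset.
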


To see that this constant is sharper when $p>q$, we give two
examples.  If $\bar{\Phi}(t)=t^p\log(t)^{-(1+\epsilon)}$, $\epsilon>0$, then
a straightforward computation shows that 
the $B_p$ constant is approximately $\epsilon^{-1/p}$ but the
$B_p^\beta$ constant is approximately $\epsilon^{-1/q}$.  If
$\bar{\Phi}(t)=t^{(rp')'}$, $r>1$, then the $B_p$ constant is $(r')^{1/p}$
bu the $B_p^\beta$ constant is $(r')^{1/q}$.  (This
second example will be applied below.)

\medskip

As an immediate consequence of Theorems~\ref{thm:main}
and~\ref{thm:Maltwoweight} we get the corresponding two weight, weak and strong
type norm inequalities for Riesz potentials.

\begin{theorem} \label{weakbump} 
  Given $0< \al<n$ and $1<p< q<\infty$, let
  $\beta=n\big(\frac1p-\frac1q\big)$.  If  $\Bar{\Psi}\in B^\beta_{q'}$ and 
the pair $(u,\sigma) \in A_{q',p',\Psi}^\alpha$,  then
$$\|I_\al(\,\cdot\,\sigma)\|_{L^p(\sigma)\ra L^{q,\infty}(u)}
\lesssim [\sigma,u]_{A_{q',p',\Psi}^\alpha}\|M_{\beta,\bar\Psi}\|_{L^{q'}\ra L^{p'}}.$$
\end{theorem}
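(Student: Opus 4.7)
The plan is to combine the weak-type half of Theorem \ref{thm:main} with Theorem \ref{thm:Maltwoweight}; as the statement itself notes, the result is essentially immediate once the indices are tracked correctly.

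First, I would invoke the weak-type equivalence from Theorem \ref{thm:main}:
$$\|I_\al(\,\cdot\,\sigma)\|_{L^p(\sigma)\ra L^{q,\infty}(u)}
 \simeq \|M_\al(\,\cdot\, u)\|_{L^{q'}(u)\ra L^{p'}(\sigma)}.$$
This replaces the Riesz potential (which is inaccessible directly from the $A_{p,q,\Psi}^\alpha$ bump condition) with the fractional maximal operator, at the cost of swapping the two weights and passing to the conjugate exponents.

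Second, I would apply Theorem \ref{thm:Maltwoweight} to the right-hand side with the substitutions $p\mapsto q'$, $q\mapsto p'$, $(u,\sigma)\mapsto (\sigma,u)$, and $\Phi\mapsto \Psi$. Since $1<p<q<\infty$ gives $1<q'<p'<\infty$, the required inequality of exponents holds. The associated $\beta$ in Theorem \ref{thm:Maltwoweight} becomes $n(1/q'-1/p')=n(1/p-1/q)$, which agrees with the $\beta$ in the present statement; the hypotheses $\bar\Psi\in B^\beta_{q'}$ and $(\sigma,u)\in A^\alpha_{q',p',\Psi}$ are precisely what we have assumed. This yields
$$\|M_\al(\,\cdot\, u)\|_{L^{q'}(u)\ra L^{p'}(\sigma)}
\lesssim [\sigma,u]_{A_{q',p',\Psi}^\alpha}\,\|M_{\beta,\bar\Psi}\|_{L^{q'}\ra L^{p'}}.$$
Chaining this with the previous display proves the theorem.

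There is no real obstacle: the only points to verify are the bookkeeping on exponents (in particular, that $n(1/q'-1/p')=n(1/p-1/q)$, so the fractional $B_p$ exponent in Theorem \ref{thm:Maltwoweight} matches the one assumed here) and the fact that Theorem \ref{thm:main} applies in the off-diagonal range $p<q$, which is precisely our hypothesis.
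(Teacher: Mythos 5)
Your proposal is correct and is precisely the paper's argument: the authors state Theorems~\ref{weakbump} and~\ref{strongbump} "as an immediate consequence of Theorems~\ref{thm:main} and~\ref{thm:Maltwoweight}," and your bookkeeping on the dual exponents ($1<q'<p'$, $n(1/q'-1/p')=n(1/p-1/q)=\beta$) and the weight pair is exactly what is needed. One small note: you correctly read the hypothesis as $(\sigma,u)\in A^\alpha_{q',p',\Psi}$, which is what both the conclusion's constant and the application of Theorem~\ref{thm:Maltwoweight} require; the statement's "$(u,\sigma)\in A^\alpha_{q',p',\Psi}$" appears to be a typographical slip in the paper, consistent with the ordering used in Theorem~\ref{strongbump}.
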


\begin{theorem}\label{strongbump} 
Given $0< \al<n$ and $1<p< q<\infty$, let
  $\beta=n\big(\frac1p-\frac1q\big)$.  If
$\Bar{\Phi}\in B^\beta_p$,  $\Bar{\Psi}\in B^\beta_{q'}$ and the pair  $(u,\sigma)$ satisfies 
$(u,\sigma)\in A_{p,q,\Phi}^\alpha$
and $(\sigma,u) \in A_{q',p',\Psi}^\alpha$,
then
\begin{equation} \label{eqn:strongbump1}
\|I_\al(\,\cdot\,\sigma)\|_{L^p(\sigma)\ra L^{q}(u)}\lesssim
[u,\sigma]_{A_{p,q,\Phi}^\alpha}\|M_{\beta,\bar\Phi}\|_{L^p\ra L^q}
+[\sigma,u]_{A_{q',p',\Psi}^\alpha}\|M_{\beta,\bar\Psi}\|_{L^{q'}\ra L^{p'}}.
\end{equation}
\end{theorem}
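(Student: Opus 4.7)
The plan is to derive Theorem~\ref{strongbump} as a direct consequence of Theorem~\ref{thm:main} together with two applications of Theorem~\ref{thm:Maltwoweight}. By the strong type equivalence in Theorem~\ref{thm:main},
$$\|I_\al(\,\cdot\,\sigma)\|_{L^p(\sigma)\ra L^{q}(u)}\simeq \|M_\al(\,\cdot\,\sigma)\|_{L^p(\sigma)\ra L^{q}(u)}+\|M_\al(\,\cdot\, u)\|_{L^{q'}(u)\ra L^{p'}(\sigma)},$$
so it suffices to control each of the two fractional maximal operator norms on the right-hand side by the corresponding summand in \eqref{eqn:strongbump1}.

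The first norm is handled by applying Theorem~\ref{thm:Maltwoweight} directly to the pair $(u,\sigma)$ with the Young function $\Phi$. Since by hypothesis $\bar\Phi \in B_p^\beta$ with $\beta = n(1/p - 1/q)$ and $(u,\sigma)\in A_{p,q,\Phi}^\alpha$, the theorem yields
$$\|M_\al(\,\cdot\,\sigma)\|_{L^p(\sigma)\ra L^q(u)}\lesssim [u,\sigma]_{A_{p,q,\Phi}^\alpha}\,\|M_{\beta,\bar\Phi}\|_{L^p\ra L^q},$$
which is exactly the first term in \eqref{eqn:strongbump1}.

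For the second norm I would apply Theorem~\ref{thm:Maltwoweight} a second time, now to the swapped pair $(\sigma,u)$ with the exponents $p, q$ replaced by $q', p'$ respectively. The essential compatibility is that the gap exponent is invariant under this swap, namely $n(1/q' - 1/p') = n(1/p - 1/q) = \beta$, so the hypothesis $\bar\Psi \in B_{q'}^\beta$ is precisely what Theorem~\ref{thm:Maltwoweight} requires, and $(\sigma,u) \in A_{q',p',\Psi}^\alpha$ is the matching two weight condition. This produces
$$\|M_\al(\,\cdot\, u)\|_{L^{q'}(u)\ra L^{p'}(\sigma)}\lesssim [\sigma,u]_{A_{q',p',\Psi}^\alpha}\,\|M_{\beta,\bar\Psi}\|_{L^{q'}\ra L^{p'}},$$
which is the second term in \eqref{eqn:strongbump1}. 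Summing the two bounds finishes the argument. There is no genuine obstacle here: the analytic content has been carried out in the proofs of Theorems~\ref{thm:main} and~\ref{thm:Maltwoweight}, and the present deduction is essentially formal. The only point deserving care is the bookkeeping that confirms $\beta$, the $A_{p,q,\Phi}^\alpha$ and $B_p^\alpha$ conditions, and the two weight/operator norms all behave correctly under the $(p,q) \leftrightarrow (q',p')$ duality built into Theorem~\ref{thm:main}.
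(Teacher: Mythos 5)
Your proof is correct and is precisely the route the paper intends: the paper explicitly presents Theorems~\ref{weakbump} and~\ref{strongbump} as immediate consequences of Theorems~\ref{thm:main} and~\ref{thm:Maltwoweight}, and your two applications of Theorem~\ref{thm:Maltwoweight} (to $(u,\sigma)$ with exponents $(p,q)$ and to $(\sigma,u)$ with exponents $(q',p')$, noting $\beta$ is invariant under this swap) carry out exactly that deduction.
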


Theorem~\ref{strongbump} is referred to as a separated bump condition:
conditions of this kind were implicit in the work of P\'erez and were
introduced explicitly for singular integrals in~\cite{CRV2012} (see below).
This condition significantly improves the original, ``double bump'' result of P\'erez~\cite{perez94}, who showed
that 
\begin{equation}\label{eqn:dbump} 
\|I_\al(\,\cdot\,\sigma)\|_{L^p(\sigma)\ra L^{q}(u)} \lesssim
[u,\sigma]_{A^\al_{p,q,\Psi,\Phi}}
\|M_{\bar\Psi}\|_{L^{q'}\ra L^{q'}}\|M_{\bar\Phi}\|_{L^p\ra L^p}, 
\end{equation}
where $\Bar{\Psi}\in B_{q'}$, $\Bar{\Phi}\in B_p$, and
\begin{equation}\label{eqn:twosideconstant}
[u,\sigma]_{A_{p,q,\Psi,\Phi}^\al}=\sup_Q |Q|^{\frac{\al}{n}+\frac1q-\frac1p}
\|u^{\frac1q}\|_{\Psi,Q}\|\sigma^{\frac1{p'}}\|_{\Phi,Q}<\infty.\end{equation}
By H\"older's inequality for Orlicz norms we
have that this quantity is (up to a constant) larger than the
right-hand side of~\eqref{eqn:strongbump1}.

\medskip

As a corollary to Theorem~\ref{thm:Maltwoweight} we can give an
alternative proof of inequality \eqref{eqn:maximalbound2}, which,
again by Theorem~\ref{thm:main}, implies inequalities
\eqref{eqn:weakRP2} and \eqref{eqn:strongRP2}.  We briefly sketch the
argument.  If $(u,\sigma)\in A_{p,q}^\al$ and $\sigma\in
A_\infty$, then Theorem 2.3 in \cite{hytonen-perez-analPDE}
$$\Big(\,\dashint_Q \sigma^r\,dx\Big)^{1/r}\leq 2\dashint_Q \sigma\,dx,$$
where 
$$r=r(\sigma)=1+\frac{1}{c_n[\sigma]_{A^M_\infty}}.$$
Notice that $r'\simeq [\sigma]_{A^M_\infty}$.  Let
$\Phi(t)=t^{rp'}$; then 
$\bar{\Phi}(t)=t^{(rp')'}$ and 
$$[u,\sigma]_{A_{p,q,\Phi}^\alpha}\lesssim [u,\sigma]_{A^\al_{p,q}}.$$
Further, as we noted above
$$\|M_{\beta,\bar{\Phi}}\|_{L^p\ra L^q}\lesssim (r')^{\frac1q}\simeq [\sigma]_{A^M_\infty}^{\frac1q}.$$

\begin{remark}
If we use the original inequality \eqref{thm:perez-bump} in this
argument,  we get a worse power of $1/p$ on the constant $[\sigma]_{A^M_\infty}$:
$$\|M_\al(\,\cdot\,\sigma)\|_{L^p(\sigma)\ra L^q(u)}\lesssim [u,\sigma]_{A^\al_{p,q}}[\sigma]_{A^M_\infty}^{\frac1p}.$$
\end{remark}

\medskip

Theorems \ref{weakbump} and \ref{strongbump} give positive answers
for all  $1<p<q<\infty$ to 
two conjectures we originally made in~\cite{cruz-moen-2012}.  There we
proved  partial results  using a more complicated corona decomposition
argument.  We were forced to assume that $\Phi$ and $\Psi$ were log
bumps: i.e.,
\[ \Phi(t) = t^{p'}\log(e+t)^{p'-1+\delta}, \quad \Psi(t)=
t^q\log(e+t)^{q-1+\delta}, \quad \delta >0, \]
 and make
the further restriction that $(p'/q')(1-\alpha/n)\geq 1$ for the weak
type inequality and $\min(q/p,p'/q')(1-\alpha/n) \geq 1$ for the
strong type inequality.  These conditions hold if $p$ and $q$ satisfy
the Sobolev relationship but do not hold if $p$ and $q$ are very close in
value.    

In ~\cite{cruz-moen-2012} we also conjectured that these results hold
in the critical exponent case $p=q$.  This case is important for its
applications in the study of partial differential equations:
see~\cite{sawyer-wheeden92} and the references it contains.  We repeat
these conjectures here.

\begin{conjecture}  \label{weakpconj} 
Given $0<\al<n$, $1<p<\infty$, and $\bar{\Psi}\in B_{p'}$, suppose
$(u,\sigma)\in A_{p',p',\Psi}^\alpha$.  Then
$$\|I_\al(\,\cdot\,\sigma)\|_{L^p(\sigma)\ra L^{p,\infty}(u)}\lesssim
[\sigma,u]_{A_{p',p',\Psi}^\alpha}\|M_{\bar\Psi}\|_{L^{p'}\ra L^{p'}}.$$
\end{conjecture}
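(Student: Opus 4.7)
The plan is to adapt the sparse-domination strategy underlying Theorem~\ref{thm:main}, combined with the Orlicz H\"older approach of P\'erez~\cite{perez94} and Cruz-Uribe--Reznikov--Volberg~\cite{CRV2012} for singular integrals, paying careful attention to the absence of any off-diagonal gap in the critical case $p=q$. The starting point is the (by-now standard) pointwise sparse domination
$$|I_\al(f\sigma)(x)|\lesssim \sum_j \sum_{Q\in\mathcal S_j}|Q|^{\al/n}\langle f\sigma\rangle_Q\,\chi_Q(x),$$
over finitely many sparse families $\mathcal S_j$, each of which one arranges to be simultaneously Lebesgue-sparse and $\sigma$-sparse. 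It therefore suffices to prove the weak-type bound for a single sparse fractional operator $A_\al^{\mathcal S}(h)=\sum_{Q\in\mathcal S}|Q|^{\al/n}\langle h\rangle_Q\chi_Q$.

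By the restricted weak-type reformulation of $L^p(\sigma)\to L^{p,\infty}(u)$, it suffices to show that for every $f\ge 0$ and every measurable set $E$ with $u(E)<\infty$,
$$\int_E A_\al^{\mathcal S}(f\sigma)\,u\,dx\lesssim K\,\|M_{\bar\Psi}\|_{L^{p'}\to L^{p'}}\,u(E)^{1/p'}\|f\|_{L^p(\sigma)},$$
where $K=[\sigma,u]_{A_{p',p',\Psi}^\al}$. Expanding the pairing cube-by-cube, applying Orlicz H\"older $\langle u\chi_E\rangle_Q\le 2\|u^{1/p}\|_{\Psi,Q}\|u^{1/p'}\chi_E\|_{\bar\Psi,Q}$, invoking the bump bound $|Q|^{\al/n}\|u^{1/p}\|_{\Psi,Q}\le K\langle\sigma\rangle_Q^{-1/p'}$, and writing $\langle f\sigma\rangle_Q=\langle f\rangle_Q^\sigma\langle\sigma\rangle_Q$ reduces the problem to the unweighted-looking estimate
$$\sum_{Q\in\mathcal S}\sigma(Q)^{1/p}|Q|^{1/p'}\langle f\rangle_Q^\sigma\,\|u^{1/p'}\chi_E\|_{\bar\Psi,Q}\lesssim \|M_{\bar\Psi}\|_{L^{p'}\to L^{p'}}\,u(E)^{1/p'}\|f\|_{L^p(\sigma)}.$$

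The latter is then to be established by H\"older's inequality in the index pair $(p,p')$ applied to the sum over $\mathcal S$. The $p$-factor is controlled by the $\sigma$-Carleson embedding $\sum_Q\sigma(Q)(\langle f\rangle_Q^\sigma)^p\lesssim \|f\|_{L^p(\sigma)}^p$, which uses the $\sigma$-sparseness of $\mathcal S$. The $p'$-factor is handled by Lebesgue sparseness: using disjoint $E_Q\subset Q$ with $|E_Q|\ge \eta|Q|$ together with the pointwise bound $\|u^{1/p'}\chi_E\|_{\bar\Psi,Q}\le \inf_Q M_{\bar\Psi}(u^{1/p'}\chi_E)$, one majorizes $\sum_Q |Q|\|u^{1/p'}\chi_E\|_{\bar\Psi,Q}^{p'}$ by $\int M_{\bar\Psi}(u^{1/p'}\chi_E)^{p'}dx$, which is at most $\|M_{\bar\Psi}\|_{L^{p'}\to L^{p'}}^{p'}u(E)$ since $\bar\Psi\in B_{p'}$.

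The principal obstacle is the very first step: producing a sparse family that is simultaneously $\sigma$-sparse and Lebesgue-sparse, so that both halves of the H\"older splitting are justified with uniform constants. It is precisely this issue that has left the full weak-type bump conjecture for singular integrals open beyond log bumps; the critical exponent $p=q$ leaves no margin for loss in either factor. Secondarily, without the Sobolev gap Theorem~\ref{thm:main} is unavailable, so one cannot reduce to a bound for $M_\al$, and the $\sigma$-Carleson and Orlicz-maximal estimates above must balance exactly. A natural first attack is to refine the two-layered corona decomposition used in~\cite{cruz-moen-2012} so as to remove both its log-bump and its Sobolev-range restrictions; more ambitiously, proving the conjecture for arbitrary $\bar\Psi\in B_{p'}$ will likely require a truly simultaneous sparse-domination construction adapted to both $dx$ and $\sigma$.
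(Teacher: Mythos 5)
This statement is one of the paper's open \emph{conjectures}; the authors give no proof of it. They explicitly say ``Very little is known about these conjectures,'' point out that Theorem~\ref{thm:main} is unavailable at the critical exponent $p=q$ (its proof hinges on the Lacey--Sawyer--Uriarte-Tuero testing theorem, Theorem~\ref{outtest}, which requires $p<q$), and note that the only known positive result is the special log-bump case $\Psi(t)=t^p\log(e+t)^{2p-1+\delta}$ from~\cite[Theorem~9.42]{MR2797562}, with a worse-than-desired exponent. So there is no ``paper's own proof'' to compare against; your submission must be judged on whether it actually closes the problem, and it does not.

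What you have written is an outline, not a proof, and to your credit you say so. Your computation is sound as far as it goes: the restricted weak-type reformulation, the Orlicz H\"older step $\langle u\chi_E\rangle_Q\le 2\|u^{1/p}\|_{\Psi,Q}\|u^{1/p'}\chi_E\|_{\bar\Psi,Q}$, the cancellation $|Q|^{\alpha/n}\|u^{1/p}\|_{\Psi,Q}\le K\langle\sigma\rangle_Q^{-1/p'}$, the factorization $\langle f\sigma\rangle_Q=\langle f\rangle_Q^\sigma\langle\sigma\rangle_Q$, and the final H\"older split in $(p,p')$ are all correctly carried out and would give the desired constant $K\|M_{\bar\Psi}\|_{L^{p'}\to L^{p'}}$ \emph{if} the two sums you split off could each be controlled. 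The second sum genuinely needs Lebesgue sparseness (disjoint $E_Q$ with $|E_Q|\gtrsim|Q|$), and the first genuinely needs the family to be $\sigma$-Carleson so that $\sum_Q\sigma(Q)(\langle f\rangle_Q^\sigma)^p\lesssim\|f\|_{L^p(\sigma)}^p$ via Lemma~\ref{carlesonlem}. A Lebesgue-sparse family is \emph{not} automatically $\sigma$-Carleson for an arbitrary weight: the nested family $\{[0,2^{-k})\}$ with $\sigma=\chi_{[0,\varepsilon)}$ already shows the $\sigma$-Carleson constant can be $\sim\log(1/\varepsilon)$. So the obstacle you flag at the end is the real one, and it is precisely the obstruction that has kept the separated bump conjecture open even for $\alpha=0$.

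Two criticisms of the write-up itself. First, your opening sentence asserts that the sparse families ``one arranges to be simultaneously Lebesgue-sparse and $\sigma$-sparse,'' stated as if routine, and only at the end do you retract this as ``the principal obstacle''; the text should not present the crux as available and then withdraw it. Second, you describe your plan as adapting ``the sparse-domination strategy underlying Theorem~\ref{thm:main},'' but the proof of Theorem~\ref{thm:main} does not use sparse domination of $I_\alpha$: it passes to dyadic grids via~\eqref{dyadicboundI} and then invokes the outer testing condition of~\cite{LacSawUT2010}; sparse families appear only in the auxiliary fractional-maximal estimates (Theorems~\ref{thm:sparse-max},~\ref{maximalbound},~\ref{thm:Maltwoweight}). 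At $p=q$ that testing characterization is unavailable (as the authors remark after Theorem~\ref{outtest}), which is exactly why the conjecture is hard, and why a direct sparse attack on $I_\alpha$ itself --- as you propose --- is a reasonable but so far unsuccessful alternative. Bottom line: this is an honest sketch of the standard approach and correctly diagnoses where it breaks; it does not prove the conjecture, and nothing in the paper does either.
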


\begin{conjecture}  \label{strongpconj} 
Given $0<\al<n$, $1<p<\infty$, $\bar{\Psi}\in B_{p'}$ and
$\bar{\Phi}\in B_p$, suppose the pair  $(u,\sigma)$ satisfies
$(u,\sigma)\in A_{p',p',\Phi}^\alpha$ and $(\sigma, u)\in A_{p,p,\Psi}^\alpha$
then 
\[ \|I_\al(\,\cdot\,\sigma)\|_{L^p(\sigma)\ra L^{p}(u)}\lesssim
[u,\sigma]_{A_{p,p,\Phi}^\alpha}\|M_{\bar\Phi}\|_{L^{p}\ra L^{p}}
 +[\sigma,u]_{A_{p',p',\Psi}^\alpha}\|M_{\bar\Psi}\|_{L^{p'}\ra L^{p'}}. \]
\end{conjecture}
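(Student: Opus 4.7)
Because the conclusion at $p=q$ cannot be reached via Theorem~\ref{thm:main} (which Conjecture~\ref{conj:forlorn} predicts to be false in that range), the plan is to work with $I_\alpha$ directly through sparse domination and then to attack the resulting bilinear form with a two-sided corona argument.

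\emph{Step 1 (sparse domination).} A fractional variant of Lerner's median-oscillation formula produces a sparse family $\mathcal{S}$, depending on $f$ and $\sigma$, such that
\begin{equation*}
|I_\alpha(f\sigma)(x)| \;\lesssim\; \sum_{Q\in\mathcal{S}} |Q|^{\alpha/n}\,\langle f\sigma\rangle_Q\,\chi_Q(x).
\end{equation*}
By duality this reduces matters to the bilinear sparse bound
\begin{equation*}
\Lambda_{\mathcal{S},\alpha}(f,g) := \sum_{Q\in\mathcal{S}} |Q|^{1+\alpha/n}\,\langle f\sigma\rangle_Q\,\langle gu\rangle_Q \;\lesssim\; K\,\|f\|_{L^p(\sigma)}\,\|g\|_{L^{p'}(u)},
\end{equation*}
where $K$ denotes the right-hand side of Conjecture~\ref{strongpconj} and the test functions satisfy $f\in L^p(\sigma)$, $g\in L^{p'}(u)$.

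\emph{Step 2 (separated two-sided corona).} Following the paradigm introduced in~\cite{CRV2012} for singular integrals, construct inside $\mathcal{S}$ two independent families of principal cubes: one, $\mathcal{F}$, stopping the $\bar\Phi$-Orlicz averages of $f\sigma^{1/p}$, and a second, $\mathcal{G}$, stopping the $\bar\Psi$-Orlicz averages of $gu^{1/p'}$. Route each $Q\in\mathcal{S}$ to whichever principal cube most recently stopped on it, splitting $\Lambda_{\mathcal{S},\alpha}=\Lambda^\Phi+\Lambda^\Psi$. On an $\mathcal{F}$-corona rooted at $F$, H\"older's inequality in Orlicz norms applied to the factorization $f\sigma=(f\sigma^{1/p})(\sigma^{1/p'})$, together with the kernel factor $|F|^{\alpha/n}$ and the average of $u$ controlling the $g$-term, extracts the $A^\alpha_{p,p,\Phi}$ functional, which is then pulled out uniformly.

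\emph{Step 3 (Carleson embedding and maximal estimate).} Sum the remaining averages over $F\in\mathcal{F}$ by Carleson embedding applied to the (sparse, hence Carleson) family $\mathcal{F}$. This produces a factor $\|M_{\bar\Phi}(f\sigma^{1/p})\|_{L^p}$, which is majorized by $\|M_{\bar\Phi}\|_{L^p\to L^p}\|f\|_{L^p(\sigma)}$ since $\bar\Phi\in B_p$. Handle $\Lambda^\Psi$ symmetrically to recover the $\|M_{\bar\Psi}\|_{L^{p'}\to L^{p'}}$ term, completing the bound by $K$.

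The hard part is Step~3 at $p=q$. When $p<q$, a strictly positive gap $\tfrac1p-\tfrac1q$ leaves quantitative slack in the Carleson embedding step, and this is essentially what drives Theorem~\ref{thm:main} in the off-diagonal range; at the critical exponent this slack vanishes, and the $A^\alpha_{p,p,\Phi}$ and $A^\alpha_{p',p',\Psi}$ constants must absorb each fractional factor $|F|^{\alpha/n}$ \emph{exactly}, with no room to spare. This is precisely the obstruction that keeps the analogous separated-bump conjecture for Calder\'on--Zygmund operators open in full generality; the partial progress for Riesz potentials in~\cite{cruz-moen-2012} required the stronger log-bumps together with a restrictive Sobolev-type condition. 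A successful attack on Conjecture~\ref{strongpconj} will almost certainly require importing, and then genuinely adapting, the most recent duality-based critical-exponent bump techniques from the singular-integral literature.
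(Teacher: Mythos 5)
This statement is an open conjecture, not a theorem: the paper explicitly labels it as such and says that ``very little is known about these conjectures.'' There is therefore no proof in the paper for you to be compared against, and your write-up is not a proof either --- you acknowledge in Step~3 that at the critical exponent $p=q$ the Carleson-embedding slack vanishes and the argument does not close. So what you have produced is a plausible strategy plus an accurate description of the obstruction, which is a reasonable thing to write, but it must be presented as such, not as a proof.

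Two substantive points worth adding if you turn this into an exposition of the state of the art. First, you should note the reduction the paper itself records: by the Sawyer--Wheeden identity \eqref{eqn:strong-weak},
\[
\|I_\al(\,\cdot\,\sigma)\|_{L^p(\sigma)\ra L^q(u)}\simeq \|I_\al(\,\cdot\,\sigma)\|_{L^p(\sigma)\ra L^{q,\infty}(u)}+\|I_\al(\,\cdot\,u)\|_{L^{q'}(u)\ra L^{p',\infty}(\sigma)},
\]
Conjecture~\ref{weakpconj} implies Conjecture~\ref{strongpconj}. This is a genuine structural simplification --- one only needs the weak-type endpoint with a single bump --- and your corona sketch does not exploit it. Second, your sparse-domination Step~1 is fine, and your Step~2 corona split mirrors the singular-integral template of~\cite{CRV2012}, but the partial result the paper cites for the fractional case is different in flavor: the weak-type bound is known only for the oversized log bump $\Psi(t)=t^p\log(e+t)^{2p-1+\delta}$ (\cite[Theorem~9.42]{MR2797562}), with exponent $2p-1+\delta$ rather than the expected $p-1+\delta$, and for $p<q$ the paper's own Theorems~\ref{weakbump} and~\ref{strongbump} resolve the separated-bump statement via Theorem~\ref{thm:main} precisely because the outer-testing characterization of Lacey--Sawyer--Uriarte-Tuero requires $p<q$. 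Your remark that ``slack in the Carleson embedding drives Theorem~\ref{thm:main}'' slightly misattributes the mechanism: the real bottleneck is the $p<q$ hypothesis in Theorem~\ref{outtest}, which is why the authors pose Conjecture~\ref{conj:forlorn} in the negative for the Muckenhoupt--Wheeden-type equivalence while still hoping Conjectures~\ref{weakpconj} and~\ref{strongpconj} are true.
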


Very little is known about these conjectures.   We do have that
Conjecture~\ref{weakpconj} implies Conjecture~\ref{strongpconj}, since
for all pairs $(u,\sigma)$ and exponents $1<p\leq q<\infty$, 
\begin{equation} \label{eqn:strong-weak}
\|I_\al(\,\cdot\,\sigma)\|_{L^p(\sigma)\ra L^q(u)}\simeq \|I_\al(\,\cdot\,\sigma)\|_{L^p(\sigma)\ra L^{q,\infty}(u)}+\|I_\al(\,\cdot\,u)\|_{L^{q'}(u)\ra L^{p',\infty}(\sigma)}.
\end{equation}
(See~\cite{sawyer-wheeden92}.)   Conjecture~\ref{weakpconj} is known
in the special case $\Psi(t)= t^p\log(e+t)^{2p-1+\delta}$:  this was
proved in~\cite[Theorem~9.42]{MR2797562}.   Note that the exponent is much larger
than desired: in the case of log bumps we would expect the exponent to
be $p-1+\delta$.    

\begin{remark}
  Conjecture~\ref{strongpconj} is the fractional version of the separated
  bump conjecture for Calder\'on-Zygmund operators made in
  \cite{Hy2012}:
\begin{equation}\label{eqn:sepbumpsi} 
\|T(\,\cdot\,\sigma)\|_{L^p(\sigma)\ra L^p(u)}
\lesssim [u,\sigma]_{A_{p,\Phi}}\|M_{\bar{\Phi}}\|_{L^p\ra L^p}
+ [\sigma,u]_{A_{p',\Psi}}\|M_{\bar{\Psi}}\|_{L^{p'}\ra
  L^{p'}}\end{equation}
(where $[u,\sigma]_{A_{p,\Phi}}=[u,\sigma]_{A_{p,p,\Phi}^0}$).  A
non-quantitative version of this conjecture first appeared
in~\cite{CRV2012}. In this paper they gave a partial result in the
scale of log bumps:  if
\[ 
\Phi(t)=t^{p'}\log(e+t)^{p'-1+\delta},
\Psi(t)=t^p\log(e+t)^{p-1+\delta}, 
\qquad \delta>0,
\]
then
$$\|T(\,\cdot\,\sigma)\|_{L^p(\sigma)\ra L^p(u)}
\lesssim [u,\sigma]_{A_{p,\Phi}}\|M_{\bar{\Phi}}\|^{p+1}_{L^p\ra L^p}
+ [\sigma,u]_{A_{p',\Psi}}\|M_{\bar{\Psi}}\|^{p'+1}_{L^{p'}\ra L^{p'}}.$$
\end{remark}

\medskip

We conclude this section with an observation.  We suspect that the
following result, which gives a connection between operator norms for
the Riesz potential and a ``bilinear'' (properly, bisublinear) maximal
operator defined by the second author in~\cite{MR2514845}, may be
applicable to this problem.  

\begin{theorem} \label{thm:bilinear} 
Given $0< \al<n$ and a dyadic
  grid $\D$, let $X$ and $Y$ be Banach function spaces.  Then
$$\|I_\al\|_{X\ra Y}\simeq \|\M_\al\|_{X\times Y'\ra L^1}, $$
where for $f,\,g\in L^1_{\text{loc}}$, 
$$\M_\al(f,g)(x)=\sup_{Q\ni x} |Q|^{\frac{\al}{n}}\dashint_Q |f|\,dx
\cdot \dashint_Q |g|\,dx.$$
\end{theorem}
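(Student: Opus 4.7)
The plan is to prove the two halves of the equivalence separately, in both cases combining a linearization/sparse extraction with the elementary pointwise estimate
\[
I_\al(f\chi_Q)(x)\ \gtrsim\ |Q|^{\al/n}\dashint_Q f,\qquad x\in Q,\ f\ge 0,
\]
which comes from $|x-y|^{n-\al}\lesssim |Q|^{(n-\al)/n}$ whenever $x,y\in Q$, together with the Köthe-dual pairing $\int hg\le \|h\|_Y\|g\|_{Y'}$ valid on a Banach function space.

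\emph{From $I_\al$ to $\M_\al$.}  Fix nonnegative $f\in X$ and $g\in Y'$.  First I would linearize $\M_\al(f,g)$ by choosing for each $x$ a cube $Q(x)\ni x$ nearly attaining the supremum, and then, via a Vitali-type reduction, replace $\{Q(x)\}$ by an essentially disjoint subfamily $\{Q_j\}$ with
\[
\int_{\R^n}\M_\al(f,g)\,dx\ \lesssim\ \sum_j |Q_j|^{1+\al/n}\dashint_{Q_j}\! f\,\dashint_{Q_j}\! g.
\]
Multiplying the pointwise bound above by $g(x)$ and integrating over $Q_j$ gives $|Q_j|^{1+\al/n}\dashint_{Q_j}f\,\dashint_{Q_j}g\lesssim \int_{Q_j} I_\al f\cdot g\,dx$; summing over the disjoint $Q_j$ and invoking $\int I_\al f\cdot g\le \|I_\al f\|_Y\|g\|_{Y'}\le \|I_\al\|_{X\to Y}\|f\|_X\|g\|_{Y'}$ closes this direction.

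\emph{From $\M_\al$ to $I_\al$.}  Here I would start from the standard dyadic domination
\[
I_\al f(x)\ \lesssim\ \sum_{k=1}^{N}\sum_{Q\in\D^{(k)}} |Q|^{\al/n}\,\dashint_Q |f|\,\chi_Q(x),
\]
valid for finitely many shifted grids $\D^{(k)}$, reducing matters to the control of a single dyadic sum.  Taking the $Y$-norm and dualizing against nonnegative $g\in Y'$ with $\|g\|_{Y'}\le 1$, the problem becomes to bound
\[
\sum_{Q\in\D}|Q|^{1+\al/n}\,\dashint_Q |f|\,\dashint_Q |g|
\]
by $\int \M_\al(|f|,|g|)\,dx$.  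I would accomplish this by running a principal-cubes/stopping-time argument simultaneously on $f$ and $g$: descending from a chosen top cube, stop at the maximal descendants where either $\dashint f$ or $\dashint g$ doubles the parent's value, and iterate.  This produces a sparse subfamily $\mathcal{S}$ equipped with pairwise disjoint major subsets $E_Q\subset Q$, $|E_Q|\ge |Q|/2$; since for $x\in E_Q$ the bilinear maximal function already dominates $|Q|^{\al/n}\dashint_Q|f|\dashint_Q|g|$, summing over $\mathcal S$ yields the claimed integral bound, and a final application of Banach function space duality finishes the argument.

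The main obstacle is the last sparse extraction: the summand is the \emph{product} $\dashint_Q f\,\dashint_Q g$ rather than a single average, so the stopping rule must monitor the two densities simultaneously.  This is a mild bivariate adaptation of the usual principal-cubes construction and is by now standard; the remaining ingredients -- the pointwise kernel comparison, the dyadic domination of $I_\al$, and the Köthe dual pairing -- are all routine.
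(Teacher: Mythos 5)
The direction from $\M_\al$ to $I_\al$, i.e.\ $\|I_\al\|_{X\to Y}\lesssim\|\M_\al\|_{X\times Y'\to L^1}$, is essentially right.  Dyadic domination \eqref{dyadicboundI} reduces to $I_\al^\D$, dualizing against $g\ge 0$ gives the full dyadic sum $\sum_{Q\in\D}|Q|^{1+\al/n}\dashint_Q f\,\dashint_Q g$, and a principal-cubes stopping time on both $f$ and $g$ (you need to stop when either average exceeds, say, $4$ times the stopping parent's value rather than $2$, so that the stopping children for $f$ and those for $g$ together still occupy at most half of $Q$ and the resulting family is genuinely sparse) reduces this to $\sum_{Q\in\Sp}|Q|^{1+\al/n}\dashint_Q f\,\dashint_Q g \lesssim \sum_{Q\in\Sp}|Q|^{\al/n}\dashint_Q f\,\dashint_Q g\,|E_Q|\le\int\M_\al(f,g)\,dx$, which is what you want.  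The geometric summability in the corona estimate, $\sum_{P:\,Q_P=Q}|P|^{1+\al/n}\lesssim |Q|^{1+\al/n}$, uses $\al>0$, which is indeed in force.

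The direction from $I_\al$ to $\M_\al$, i.e.\ $\|\M_\al\|_{X\times Y'\to L^1}\lesssim\|I_\al\|_{X\to Y}$, has a genuine gap.  A single Vitali application does \emph{not} yield pairwise disjoint cubes $\{Q_j\}$ with $\int\M_\al(f,g)\lesssim\sum_j|Q_j|^{1+\al/n}\dashint_{Q_j}f\,\dashint_{Q_j}g$: Vitali gives disjoint $Q_j$ whose dilates $5Q_j$ cover $\supp \M_\al(f,g)$, but $\M_\al(f,g)$ on $5Q_j$ is not controlled by the average over $Q_j$, so the integral does not localize.  To get the claimed bound one must decompose the level sets $\{\M_\al(f,g)>2^k\}$ and run a Vitali (or Calder\'on--Zygmund) selection at each level, and the resulting family is merely \emph{sparse}, not disjoint: cubes from different levels are nested, and a point can lie in an unbounded chain of selected cubes.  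This breaks the final step, since for a sparse family $\sum_Q\int_Q (I_\al f)\,g\,dx$ is not comparable to $\int (I_\al f)\,g\,dx$ (the overlap count $\sum_Q\chi_Q$ is unbounded).  The correct sparsification produces pairwise disjoint \emph{major subsets} $E_Q\subset Q$, giving $\int\M_\al(f,g)\lesssim\sum_{Q\in\Sp}|Q|^{\al/n}\dashint_Q f\,\dashint_Q g\,|E_Q|$; the problem is then that the factor $\dashint_Q g\,|E_Q|$ is not $\int_{E_Q}g$, and the natural replacement by $\int_{E_Q}M^\D g$ costs a bound for $M$ on $Y'$ that the statement does not assume.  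You should either flag the additional hypothesis or supply the sharper argument that exploits the positivity and nondegeneracy of the Riesz kernel (for instance, bounding $|Q|^{1+\al/n}\dashint_Q f\,\dashint_Q g$ by $\iint_{Q\times Q}|x-y|^{\al-n}f(y)g(x)\,dy\,dx$ and then handling the overlap across the sparse chain, e.g.\ by a telescoping decomposition in $Q\times Q$).  As written, the passage from a sparse (or level-by-level Vitali) family to a disjoint one is unjustified, and the conclusion does not follow.
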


Earlier, related estimates for singular integrals were implicit
in~\cite{cruz-uribe-martell-perezP} and the corresponding version of
Theorem \ref{thm:bilinear} for Calder\'on-Zygmund operators was proved
in~\cite{lerner-IMRN2012}. Theorem~\ref{thm:bilinear} is proved in
essentially the same way and we omit the details.  Inequality
\eqref{eqn:dbump} follows from Theorem \ref{thm:bilinear} and the
weighted theory for $\M_\al$ developed in \cite[Theorem
6.6]{MR2514845}, but we are unable to prove separated bump results
using this approach.

\section{Proof of Theorem \ref{thm:main}}
\label{section:main-thm}

We divide this section into two parts.  In the first we gather some
results about dyadic Riesz potentials, and in the second give the
proof itself.  

\subsection*{Dyadic Riesz potentials} 
A dyadic grid, usually denoted $\D$, is a collection of cubes in $\R^n$ with the following properties:
\begin{enumerate}[\indent (a)]
\item given $Q\in \D$, the side-length satisfies $\ell(Q)=2^k$ for some $k\in \Z$;
\item given $Q,P\in \D$, $Q\cap P$ is either $P$, $Q$, or $\varnothing$;
\item for a fixed $k\in \Z$ the set $\D_k=\{Q\in \D:\ell(Q)=2^k\}$ is a partition of $\R^n$.
\end{enumerate}
Given $t\in \{0,1/3\}^n$ we define the family of dyadic grids
$$\D^t=\{2^{-k}([0,1)^n+m+(-1)^kt):k\in \Z, m\in \Z^n\}.$$
When $t=0$,  $\D^0$ is the classic dyadic grid with base point at the
origin used in the Calder\'on-Zygmund decomposition.  

Given a dyadic grid $\D$ and $0<\alpha<n$, we define a dyadic version
of $I_\al$: 
\begin{equation} \label{eqn:dR-defn}
I^{\D}_\al f(x) =\sum_{Q\in \D} \frac{1}{|Q|^{1-\frac{\al}{n}}}\int_Q f(y)\,dy\cdot\chi_Q(x).
\end{equation}
In \cite{cruz-moen-2012} we showed that for non-negative functions $f$, 
\begin{equation}\label{dyadicboundI} 
I_\al f(x) \lesssim \max_{t\in \{0,1/3\}^n} I_\al^{\D^t}f(x).
\end{equation}
Since $I_\alpha$ and $I_\alpha^\D$ are
positive operators, hereafter we may assume that we are dealing with
non-negative functions and can apply these inequalities to reduce to
the dyadic case.  

To estimate the norm of $I_\alpha^\D$, we will use a testing condition
due to Lacey, Sawyer and Uriarte-Tuero~\cite{LacSawUT2010}. To state
their result, we need two definitions.  First, given a cube $Q_0\in \D$, for
$x\in Q_0$ define the ``outer'' dyadic Riesz potential
$$I_\al^{Q_0}f(x)=\sum_{\substack{Q\in \D\\ Q\supset Q_0}} |Q|^{\frac{\al}{n}}\dashint_Q f(y)\,dy\cdot\chi_Q(x).$$
Second, given $0<\alpha<n$,  $1<p<q<\infty$  and a pair of weights
$(u,\sigma)$,  define the testing constant
$$[u,\sigma]_{I_\al^{\text{out}},p,q}=
\sup_{Q_0}\left(\int_{\R^n}I^{Q_0}_\al(\sigma\chi_{Q_0})(x)^{q}
  u\,dx\right)^{1/q}\sigma(Q_0)^{-1/p}.$$ 

\begin{theorem}\label{outtest} Given $0<\alpha<n$ and $1<p<q<\infty$, 
$$\|I^\D_\al(\,\cdot\,\sigma)\|_{L^p(\sigma)\ra L^{q,\infty}(u)}\simeq [\sigma,u]_{I_\al^{\text{out}},q',p'}$$
and
$$\|I^\D_\al(\,\cdot\,\sigma)\|_{L^p(\sigma)\ra L^{q}(u)}\simeq
[u,\sigma]_{I_\al^{\text{out}},p,q}
+[\sigma,u]_{I_\al^{\text{out}},q',p'}.$$
\end{theorem}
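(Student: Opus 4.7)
The plan is to prove this as a Sawyer-type two-weight testing theorem for the positive dyadic operator $I_\alpha^\D$. The statement is essentially the Lacey--Sawyer--Uriarte-Tuero characterization specialized to the dyadic model; the work is to check that the particular ``outer'' testing quantities $[u,\sigma]_{I_\alpha^{\text{out}},p,q}$ and $[\sigma,u]_{I_\alpha^{\text{out}},q',p'}$ suffice, rather than full Sawyer tests.

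Necessity is immediate. For any $Q_0\in\D$ one has $I_\alpha^{Q_0}(\sigma\chi_{Q_0})(x)\leq I_\alpha^\D(\sigma\chi_{Q_0})(x)$ pointwise: for $x\notin Q_0$, the only dyadic $Q\ni x$ with $Q\cap Q_0\neq\varnothing$ satisfy $Q\supsetneq Q_0$, so both sums agree off $Q_0$; for $x\in Q_0$ the outer operator is an obvious subsum of the full one. Testing the strong-type norm inequality on $f=\chi_{Q_0}$ and dividing by $\sigma(Q_0)^{1/p}$ recovers $[u,\sigma]_{I_\alpha^{\text{out}},p,q}$; the dual constant $[\sigma,u]_{I_\alpha^{\text{out}},q',p'}$ arises from the same argument applied to the dual estimate, using the symmetry $\int I_\alpha^\D(f\sigma)\, g u\,dx=\int f\sigma\cdot I_\alpha^\D(gu)\,dx$. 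The weak-type necessity is analogous.

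For sufficiency in the strong-type case, I would dualize to the bilinear form
\[
B(f,g)=\sum_{Q\in\D}|Q|^{\frac{\alpha}{n}-1}\Big(\int_Q f\sigma\,dy\Big)\Big(\int_Q g u\,dy\Big),
\]
and run parallel corona (principal cube) stopping-time decompositions: a family $\mathcal{F}$ on $f$ where $\sigma$-averages double, and independently $\mathcal{G}$ on $g$ where $u$-averages double. Split $B(f,g)$ according to whether $\pi_\mathcal{F}(Q)\subseteq \pi_\mathcal{G}(Q)$ or the reverse inclusion holds; on each resulting pair $(F,G)$ of principal ancestors, the $\sigma$-average of $f$ (respectively the $u$-average of $g$) is essentially constant along the chain, so pulling it out leaves a residual cube-sum that is bounded directly by $[u,\sigma]_{I_\alpha^{\text{out}},p,q}\sigma(F)^{1/p}u(F)^{1/q'}$ (respectively its dual counterpart on $G$). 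For the weak-type bound, a single Calder\'on-Zygmund stopping-time decomposition of the level set $\{I_\alpha^\D(f\sigma)>\lambda\}$ produces disjoint maximal cubes $\{Q_j\}$ and reduces the $u$-measure of the level set to a pairing against $\chi_{\bigcup_j Q_j}$, which is controlled by a single application of $[\sigma,u]_{I_\alpha^{\text{out}},q',p'}$.

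The main obstacle is the summation over principal cubes in the off-diagonal regime $p<q$. A careless bound on $B$ produces a product of the two testing constants rather than a sum, and one needs the Carleson-type embedding $\sum_{F\in\mathcal{F}}\sigma(F)(\langle f\rangle_F^\sigma)^p\lesssim\|f\|_{L^p(\sigma)}^p$ combined with an $\ell^p\hookrightarrow \ell^q$ passage exploiting the gap $p<q$ via H\"older's inequality and the quasi-disjointness of stopping children. This is where the off-diagonal hypothesis is genuinely used, and it is what ensures the two testing constants enter additively in the strong-type estimate (and that only the dual one is needed for the weak-type estimate).
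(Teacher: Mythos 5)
The paper does not prove Theorem~\ref{outtest}; it states it and attributes it to Lacey, Sawyer, and Uriarte-Tuero, citing \cite{LacSawUT2010}. The remark immediately following the theorem in the paper even explains that the restriction $p<q$ is imported from that reference (which treats $p=q$ separately with a different testing condition). So there is no internal proof for your argument to be compared against; the following is an assessment of your sketch on its own terms.

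Your necessity argument is correct and essentially complete: the pointwise domination $I_\alpha^{Q_0}(\sigma\chi_{Q_0})\leq I_\alpha^{\D}(\sigma\chi_{Q_0})$ holds exactly as you say, testing on $f=\chi_{Q_0}$ yields $[u,\sigma]_{I_\alpha^{\text{out}},p,q}$, and the dual and weak-type variants are routine. The problem is in the sufficiency, at the phrase ``bounded directly.'' After you freeze the averages at a pair of principal cubes $(F,G)$, the residual you are left with has the shape $\sum_Q |Q|^{\alpha/n-1}\sigma(Q)\,u(Q)$ taken over cubes $Q$ \emph{contained in} $F$ in the relevant corona. This is a downward (inner) sum. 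But $[u,\sigma]_{I_\alpha^{\text{out}},p,q}$ is built from the outer potential $I_\alpha^{Q_0}$, which sums only over the ancestors $R\supseteq Q_0$; when you apply the testing constant to a single cube $Q$ it gives only
\[
|Q|^{\frac{\alpha}{n}-1}\sigma(Q)^{\frac{1}{p'}}u(Q)^{\frac1q}\ \lesssim\ [u,\sigma]_{I_\alpha^{\text{out}},p,q},
\]
which is exactly the $A_{p,q}^\alpha$ condition and is nowhere near summable over all descendants of $F$. Converting that upward-tower information into control of the downward residual sums is precisely the content of the Lacey--Sawyer--Uriarte-Tuero theorem, not an immediate step. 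The place you invoke the gap $p<q$ (the $\ell^p\hookrightarrow\ell^q$ device together with the Carleson embedding over principal cubes) is where the real work must happen, but in your sketch that step is positioned after the residual is ``already bounded,'' which makes the argument circular: the residual estimate you assert is the Sawyer-type local bound over cubes inside $F$ that the theorem itself is supposed to deliver from the weaker outer test. The weak-type half is sketched even more briefly, and as written it is not clear why one Calder\'on--Zygmund decomposition together with the dual outer test closes it; in the standard approach the weak-type bound is derived by pairing $L^{q,\infty}(u)$ with $L^{q',1}(u)$ and reducing to the dual \emph{full} Sawyer test, and the passage from that to the outer test again needs the genuine LSUT mechanism.
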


\begin{remark}
In Theorem~\ref{outtest} the restriction that $p<q$ is essential; this
is the reason for this condition for our paper.
In~\cite{LacSawUT2010} they give a different testing condition that
holds when $p=q$, but we have been unable to apply our techniques to
get estimates in this case.
\end{remark}

\subsection*{Proof of Theorem~\ref{thm:main}}
Our argument is broadly similar to the one
in~\cite{cruz-martell-perezP}.  By inequality~\eqref{dyadicboundI} it
suffices to fix a dyadic grid $\D$ and obtain norm estimates for
$I_\alpha^\D$ that are independent of the grid.  And by Theorem
\ref{outtest} it suffices to estimate the testing constant for the
outer Riesz potential.  The inequality ``$\lesssim"$ in Theorem
\ref{thm:main} is a consequence of the following result.

\begin{theorem} 
Given $0<\al<n$, $1<p<q<\infty$ and a pair of weights $(u,\sigma)$, then
$$[u,\sigma]_{I_\al^{\text{out}},p,q}\leq  
(1-2^{\al-n})^{-1} \|M_\alpha(\,\cdot\,\sigma)\|_{L^{p}(\sigma)\ra L^{q}(u)}$$
and
$$[\sigma,u]_{I_\al^{\text{out}},q',p'}\leq 
(1-2^{\al-n})^{-1} \|M_\alpha(\,\cdot\,u)\|_{L^{q'}(u)\ra L^{p'}(\sigma)}.$$
\end{theorem}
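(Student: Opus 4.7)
The plan is to establish a pointwise domination of the outer dyadic Riesz potential by the fractional maximal operator, with constant $(1-2^{\alpha-n})^{-1}$ coming from summing a geometric series, and then the two inequalities follow immediately from the definition of the testing constant and the assumed $L^p(\sigma)\to L^q(u)$ bound.

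First I would fix a dyadic cube $Q_0\in\D$ and an arbitrary point $x\in\R^n$, and identify exactly which cubes in the sum defining $I_\alpha^{Q_0}(\sigma\chi_{Q_0})(x)$ contribute. The cubes $Q\in\D$ with $Q\supset Q_0$ form a nested chain $Q_0\subsetneq Q_1\subsetneq Q_2\subsetneq\cdots$ with $|Q_k|=2^{kn}|Q_0|$; the condition $\chi_Q(x)=1$ selects the tail of this chain starting at the smallest $Q_{k_x}$ containing both $Q_0$ and $x$ (if $x\in Q_0$, then $k_x=0$). Since $\sigma\chi_{Q_0}$ is supported in $Q_0\subset Q_k$ for all $k\geq k_x$, the integral $\int_{Q_k}\sigma\chi_{Q_0}\,dy=\sigma(Q_0)$, so
\[
I_\alpha^{Q_0}(\sigma\chi_{Q_0})(x)
= \sigma(Q_0)\sum_{k\geq k_x} |Q_k|^{\alpha/n-1}
= \sigma(Q_0)|Q_{k_x}|^{\alpha/n-1}\sum_{j\geq 0} 2^{j(\alpha-n)}
= \frac{\sigma(Q_0)|Q_{k_x}|^{\alpha/n-1}}{1-2^{\alpha-n}}.
\]

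Next, I would compare this with $M_\alpha(\sigma\chi_{Q_0})(x)$. Since $Q_{k_x}\ni x$ and contains $Q_0$, the definition of $M_\alpha$ gives
\[
M_\alpha(\sigma\chi_{Q_0})(x) \geq |Q_{k_x}|^{\alpha/n}\dashint_{Q_{k_x}}\sigma\chi_{Q_0}\,dy = |Q_{k_x}|^{\alpha/n-1}\sigma(Q_0),
\]
and combining this with the geometric series above yields the pointwise bound
\[
I_\alpha^{Q_0}(\sigma\chi_{Q_0})(x)\leq (1-2^{\alpha-n})^{-1}M_\alpha(\sigma\chi_{Q_0})(x) \qquad\text{for all }x\in\R^n.
\]

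With this pointwise inequality in hand, the rest is mechanical. Raising both sides to the $q$th power, integrating against $u$, and applying the assumed norm inequality $M_\alpha(\,\cdot\,\sigma):L^p(\sigma)\to L^q(u)$ to the function $\chi_{Q_0}\in L^p(\sigma)$ (which has norm $\sigma(Q_0)^{1/p}$) gives
\[
\left(\int_{\R^n}I_\alpha^{Q_0}(\sigma\chi_{Q_0})^q u\,dx\right)^{1/q}\leq (1-2^{\alpha-n})^{-1}\|M_\alpha(\,\cdot\,\sigma)\|_{L^p(\sigma)\to L^q(u)}\sigma(Q_0)^{1/p}.
\]
Dividing by $\sigma(Q_0)^{1/p}$ and taking the supremum over $Q_0$ yields the first inequality. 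The second inequality follows by the identical argument with the roles of $u$ and $\sigma$ exchanged and $(p,q)$ replaced by $(q',p')$.

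No serious obstacle arises: the only nontrivial input is the elementary dyadic observation that the cubes $Q\supset Q_0$ containing a given $x$ form a tail of a nested chain, so the relevant sum is a geometric series with ratio $2^{\alpha-n}<1$, and the smallest cube in that tail already realizes a lower bound for $M_\alpha(\sigma\chi_{Q_0})(x)$. The sharpness of the constant $(1-2^{\alpha-n})^{-1}$ is built into the geometric series.
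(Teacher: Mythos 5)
Your proof is correct and follows essentially the same approach as the paper: dominate $I_\alpha^{Q_0}(\sigma\chi_{Q_0})(x)$ pointwise by $(1-2^{\alpha-n})^{-1}M_\alpha(\sigma\chi_{Q_0})(x)$ by summing the geometric tail of the nested dyadic chain above $Q_0$, then apply the assumed norm bound to the test function $\chi_{Q_0}$. Your single-index bookkeeping via $k_x$ is a slightly cleaner packaging of the paper's two-case split ($x\in Q_0$ versus $x\in Q_{j+1}\setminus Q_j$), but the argument is the same.
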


\begin{proof} 
We will prove the first inequality; the proof of the second is
identical.  
Fix a cube $Q_0\in \D$ and for each $k\geq1$ let $Q_k \in\D$ be the
unique cube such that $Q_{k-1}\subset Q_k$ and  $|Q_k|=2^{kn}|Q_0|$.
By definition,
$$I_\al^{Q_0}(\sigma\chi_{Q_0})(x)
=\sum_{Q\supset Q_0} |Q|^{\frac{\al}{n}-1}\int_Q \sigma\chi_{Q_0}\,dx\cdot
\chi_Q(x)
=\int_{Q_0}\sigma\,dx\cdot\sum_{k=0}^\infty|Q_k|^{\frac{\al}{n}-1}\cdot\chi_{Q_k}(x).$$
Clearly, the support of $I_\al^{Q_0}(\sigma\chi_{Q_0})$ is
$\bigcup_{k=0}^\infty Q_k$.  If $x\in Q_0$, then 
\begin{multline*}
I_\al^{Q_0}(\sigma\chi_{Q_0})(x)=\int_{Q_0}\sigma\,dx\cdot\sum_{k=0}^\infty |Q_k|^{\frac{\al}{n}-1}=|Q_0|^{\frac{\al}{n}-1}\int_{Q_0}\sigma\,dx\cdot\sum_{k=0}^\infty 2^{k(\al-n)}\\
= (1-2^{\al-n})^{-1} |Q_0|^{\frac{\al}{n}-1}\int_{Q_0}\sigma\,dx 
\leq  (1-2^{\al-n})^{-1}M_\al(\sigma\chi_{Q_0})(x).
\end{multline*}
If $x\in Q_{j+1}\backslash Q_j$ for some $j\geq 0$, then
\begin{multline*}
I_\al^{Q_0}(\sigma\chi_{Q_0})(x) =\int_{Q_0}\sigma\,dx\cdot\sum_{k=j}^\infty
|Q_k|^{\frac{\al}{n}-1} \\
 =|Q_0|^{\frac{\al}{n}-1}\int_{Q_0}\sigma\,dx\cdot\sum_{k=j}^\infty 2^{k(\al-n)}
= (1-2^{\al-n})^{-1}2^{j(\al-n)}|Q_0|^{\frac{\al}{n}-1}\int_{Q_0}\sigma\,dx \\
=(1-2^{\al-n})^{-1}|Q_j|^{\frac{\al}{n}-1}\int_{Q_0}\sigma\,dx
\leq (1-2^{\al-n})^{-1} M_\al(\sigma\chi_{Q_0})(x).
\end{multline*}
We therefore have that
$$\int_{\R^n}I_\al^{Q_0}(\sigma\chi_{Q_0})^{q}u\,dx
\le (1-2^{\al-n})^{-q} \int_{\R^n}M_\al(\sigma\chi_{Q_0})^{q} u\,dx,$$
and the desired inequality follows immediately.
\end{proof}

\medskip

Finally, we prove the reverse inequalities in Theorem~\ref{thm:main}.
We will prove that 
\[ \| M_\alpha(\,\cdot\, u)\|_{L^{q'}(u)\ra L^{p'}(\sigma)} \lesssim
\|I_\alpha(\,\cdot\, \sigma )\|_{L^p(\sigma) \ra L^{q,\infty}(u)}; \]
the other estimates are proved in essentially the same way.
Fix a cube $Q$; then 
\begin{multline*} \int_Q I_\alpha (f\sigma) u\,dx 
 \leq \|I_\alpha
(f\sigma)\|_{L^{q,\infty}(u)}\|\chi_Q\|_{L^{q',1}(u)} \\
 \leq \|I_\alpha(\cdot \sigma) \|_{L^p(\sigma) \ra L^{q,\infty}(u)}
\|f\|_{L^p(\sigma)}u(Q)^{1/q'}.
\end{multline*}
Let $f= I_\alpha(u\chi_Q)^{p'-1}\chi_Q$.  Then, since $I_\alpha$ is
self-adjoint, we have that 
\[ \left(\int_Q I_\alpha(u\chi_Q)^{p'} \sigma\,dx\right)^{1/p'} \leq
\|I_\alpha(\,\cdot\, \sigma )\|_{L^p(\sigma) \ra L^{q,\infty}(u)} u(Q)^{1/q'}. \]
Since for non-negative functions $f$, $M_\alpha f(x) \lesssim I_\alpha
f(x)$, we have that

\begin{multline*} [\sigma,u]_{M_\alpha,q',p'} = \sup_Q 
\left(\int_Q M_\alpha(u\chi_Q)^{p'}
  \sigma\,dx\right)^{1/p'}u(Q)^{-1/q'}
\\ \lesssim\|I_\alpha(\cdot \sigma) \|_{L^p(\sigma) \ra L^{q,\infty}(u)}. \end{multline*}

But by Sawyer's testing condition for the fractional maximal
operator~\cite{sawyer82b},
\[ \| M_\alpha(\,\cdot\, u)\|_{L^{q'}(u)\ra L^{p'}(\sigma)} \simeq
[\sigma,u]_{M_\alpha,q',p'}. \]
This completes the proof.

\section{Estimates involving $A_\infty$}
\label{section:other-proofs}

In this section we prove
Theorems~\ref{maximalbound},~\ref{thm:red-thm}
and~\ref{thm:Mlog-Ainfty}.  We first give some further results on
dyadic operators, and we then prove each of these theorems in turn.

\subsection*{Dyadic fractional maximal operators}

Given a dyadic grid $\D$, define the dyadic fractional maximal
operator by
$$M^\D_\al f(x) =\sup_{Q\in \D} \frac{1}{|Q|^{1-\frac{\al}{n}}}\int_Q f(y)\,dy\cdot\chi_Q(x).$$
Essentially the same argument in~\cite{cruz-moen-2012} that gave
us~\eqref{dyadicboundI} also lets us prove the corresponding estimate
for the fractional maximal operator:
\begin{equation}\label{dyadicboundM}
M_\al f(x)\lesssim  \max_{t\in \{0,1/3\}^n} M_\al^{\D^t}f(x).
\end{equation}
Therefore, it will suffice to prove norm estimates for $M_\alpha^\D$.
In fact, we will prove estimates for a linearization of this operator.
We begin by defining sparse families of cubes.  These are a
generalization of an idea closely connected to the Calder\'on-Zygmund
decomposition.  Given a dyadic grid $\D$, a subset $\Sp\subseteq \D$
is sparse if there exists a family of disjoint, ``thick''  subsets: for
every $Q\in \Sp$ there exists $E_Q\subseteq Q$ such that the family
$\{E_Q\}_{Q\in \Sp}$ is pairwise disjoint and $|E_Q|\geq \frac12|Q|$.

Given a sparse family $\Sp$ define the linear operator
$$L_\al^\Sp f(x)=\sum_{Q\in \Sp} |Q|^{\frac{\al}{n}}\dashint_Q f\,dx
\cdot\chi_{E_Q}(x).$$

\begin{theorem} \label{thm:sparse-max}
Given $0\leq \al<n$ and a dyadic grid $\D$,  for every non-negative
Function $f \in L^\infty_c(\R^n)$ there exists a sparse family
$\Sp\subset \D$ such that for a.e. $x$,
\begin{equation} \label{eqn:sawyer-ptwise}
 M_\al^\D f(x)\lesssim L^\Sp_\al f (x),
\end{equation}
and the implicit constants do not depend on $\D$, $\Sp$ or $f$.  
Consequently,  for any Banach function spaces $X$ and $Y$
$$\|M_\al^\D\|_{X\ra Y}\simeq \sup_{\Sp}\|L^\Sp_\al\|_{X\ra Y}$$
and
$$\|M_\al\|_{X\ra Y}\simeq \sup_{\Sp,\D}\|L^\Sp_\al\|_{X\ra Y}.$$
\end{theorem}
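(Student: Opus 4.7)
The plan is to reduce to the dyadic case and construct the sparse family by a Calder\'on--Zygmund stopping time, much as for the ordinary maximal operator, taking care that the correct sparsity ratio holds for $\al > 0$. By \eqref{dyadicboundM} it is enough to fix a dyadic grid $\D$ and a non-negative $f \in L^\infty_c(\R^n)$ and produce a sparse $\Sp \subset \D$ with $M_\al^\D f(x) \lesssim L_\al^\Sp f(x)$ a.e. The norm equivalences then follow, since the reverse pointwise bound $L_\al^\Sp f(x) \leq M_\al^\D f(x) \leq M_\al f(x)$ is immediate: for a.e.\ $x$ only one term in the sum defining $L_\al^\Sp f$ is nonzero, and its value is an average over a dyadic cube containing $x$.

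To construct $\Sp$, I would fix $a > 2^{n-\al+1}$, set $\Omega_k = \{M_\al^\D f > a^k\}$, and let $\{Q_{k,j}\}_j$ be the maximal dyadic cubes $Q$ with $|Q|^{\al/n}\dashint_Q f\,dy > a^k$; these partition $\Omega_k$. Take $\Sp = \bigcup_k \{Q_{k,j}\}$ with $E_{Q_{k,j}} = Q_{k,j}\setminus \Omega_{k+1}$, associating each cube with the largest level at which it is maximal so that the $E_Q$ are unambiguously defined and pairwise disjoint. The pointwise bound is then straightforward: if $a^{k_0} < M_\al^\D f(x) \leq a^{k_0+1}$, some $Q_{k_0,j}$ contains $x$, has $a^{k_0} < |Q_{k_0,j}|^{\al/n}\dashint_{Q_{k_0,j}} f\,dy$, and $x \in E_{Q_{k_0,j}}$; hence $M_\al^\D f(x) \leq a\, L_\al^\Sp f(x)$.

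The main step, and the only real departure from the classical $\al=0$ argument, is verifying the sparsity condition $|E_{Q_{k,j}}| \geq \tfrac12|Q_{k,j}|$. By maximality of $Q_{k,j}$ at level $k$, its dyadic parent satisfies $\int_{\widehat{Q_{k,j}}} f \leq a^k 2^{n-\al}|Q_{k,j}|^{1-\al/n}$, so $\int_{Q_{k,j}} f \leq 2^{n-\al}a^k |Q_{k,j}|^{1-\al/n}$. On the other hand the disjoint subcubes $\{Q_{k+1,i}\}$ inside $Q_{k,j}$ satisfy $\sum_i \int_{Q_{k+1,i}} f > a^{k+1}\sum_i |Q_{k+1,i}|^{1-\al/n}$, which combined gives $\sum_i|Q_{k+1,i}|^{1-\al/n} \leq 2^{n-\al}a^{-1}|Q_{k,j}|^{1-\al/n}$. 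The fractional wrinkle, and the main obstacle, is that this controls $\sum |Q'|^{1-\al/n}$ rather than $\sum |Q'|$; the remedy is the elementary bound $|Q'| \leq |Q_{k,j}|^{\al/n}|Q'|^{1-\al/n}$, valid for any $Q' \subseteq Q_{k,j}$ since $\al/n \geq 0$, which yields $\sum_i |Q_{k+1,i}| \leq 2^{n-\al}a^{-1}|Q_{k,j}| \leq \tfrac12|Q_{k,j}|$ by the choice of $a$. After this, the norm equivalences are formal: combining the pointwise bound with the trivial reverse inequality gives $\|M_\al^\D\|_{X\ra Y} \simeq \sup_\Sp \|L_\al^\Sp\|_{X\ra Y}$, and \eqref{dyadicboundM} together with $M_\al \geq M_\al^\D$ extends this to $\|M_\al\|_{X\ra Y} \simeq \sup_{\Sp,\D}\|L_\al^\Sp\|_{X\ra Y}$.
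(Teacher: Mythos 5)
Your proof is correct, and it supplies an actual argument where the paper itself merely cites Sawyer~\cite{sawyer82b} and~\cite[Section~9.3]{MR2797562}, calling the pointwise inequality ``well known.'' The approach you take --- fixing $a>2^{n-\al+1}$, forming the maximal dyadic cubes at each height $a^k$, and setting $E_{Q_{k,j}}=Q_{k,j}\setminus\Omega_{k+1}$ --- is precisely the standard Calder\'on--Zygmund stopping-time construction that those references use, so you are not really departing from the intended route, just making it explicit. The one genuinely nontrivial adjustment for $\al>0$ is exactly where you put your emphasis: disjointness of the next-generation cubes only controls $\sum_i|Q_{k+1,i}|^{1-\al/n}$, and your inequality $|Q'|\le|Q_{k,j}|^{\al/n}|Q'|^{1-\al/n}$, valid because each $Q_{k+1,i}\subseteq Q_{k,j}$, is the right lever to recover $\sum_i|Q_{k+1,i}|\le 2^{n-\al}a^{-1}|Q_{k,j}|\le\frac12|Q_{k,j}|$. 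One compressed step worth spelling out if you wrote this up in full: the claim that the $E_Q$ are pairwise disjoint under the ``largest level'' convention needs the observation that if $Q'\subsetneq Q$ are both in $\Sp$ with assigned levels $k^*(Q')$ and $k^*(Q)$, then necessarily $k^*(Q')>k^*(Q)$ (otherwise $Q$ would witness failure of maximality of $Q'$ at level $k^*(Q')$), so $E_{Q'}\subseteq Q'\subseteq\Omega_{k^*(Q)+1}$ is disjoint from $E_Q$. Also, for the stopping cubes to exist you tacitly use that $f\in L^\infty_c$ and $\al<n$ force $|Q|^{\al/n}\dashint_Q f\to 0$ as $|Q|\to\infty$; this is fine but is the reason the hypothesis $f\in L^\infty_c$ appears in the statement. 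With these small points filled in, the pointwise bound, the sparsity, and the passage to the norm equivalences via~\eqref{dyadicboundM} and the trivial $L^\Sp_\al f\le M_\al^\D f\le M_\al f$ all go through exactly as you say.
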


The pointwise inequality~\eqref{eqn:sawyer-ptwise} is well known; the
basic idea first appeared in Sawyer~\cite{sawyer82b} (see
also~\cite[Section~9.3]{MR2797562}).  The norm inequalities follow
from this and~\eqref{dyadicboundM}.

\subsection*{Proof of Theorem \ref{maximalbound}}
For the proof we need three lemmas.  The first is an $L^p$ estimate
for a dyadic maximal operator defined with respect to an arbitrary
measure.  Let $\mu$
be a positive Borel measure on $\R^n$;  given a dyadic grid $\D$ and $0\leq\beta<n$, define
$$M^\D_{\beta,\mu}f(x)=\sup_{Q\in \D}
\frac{1}{\mu(Q)^{1-\frac{\beta}{n}}}\int_Q |f|\,d\mu \cdot
\chi_Q(x).$$
When $\beta=0$ we simply write $M_{0,\mu}^\D=M_\mu^\D$.   The operator
$M^\D_\beta$ satisfies norm inequalities with bounds independent of
$\mu$:  the proof of the next result can be found in~\cite{moen-2012}. 

\begin{lemma}\label{weightedmax} Given $0\leq \beta<n$ and $1<p\leq
  \frac{n}{\beta}$, 
define $\frac{1}{q}=\frac{1}{p}-\frac{\beta}{n}$.  If $\mu$ is a
measure such that $\mu(\R^n)=\infty$,  then
$$ \|M^\D_{\beta,\mu}\|_{L^p(\mu)\ra L^q(\mu)} \leq \Big(1+\frac{p'}{q}\Big)^{1-\frac{\beta}{n}}.$$
\end{lemma}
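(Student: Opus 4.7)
The plan is a Doob-type self-improvement argument applied to the level sets of $M^\D_{\beta,\mu}$, combined with Minkowski's integral inequality.

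First, I will establish the weak-type endpoint. Setting $q_0 = n/(n-\beta)$ and $E_\lambda = \{M^\D_{\beta,\mu}f > \lambda\}$, a standard Calder\'on--Zygmund decomposition writes $E_\lambda$ as the disjoint union of maximal dyadic cubes $\{R_j\}$ satisfying $\int_{R_j} f\,d\mu > \lambda \mu(R_j)^{1-\beta/n}$; the hypothesis $\mu(\R^n)=\infty$ guarantees (after the routine truncation of $f$ to bounded, compactly supported functions) that the dyadic averages vanish at large scales, so maximal cubes exist. Solving for $\mu(R_j)$, summing over $j$, and using the elementary bound $\sum a_j^{q_0}\leq (\sum a_j)^{q_0}$ (valid for $q_0>1$ and $a_j\geq 0$) yields $\mu(E_\lambda)^{1/q_0}\leq \lambda^{-1}\int_{E_\lambda} f\,d\mu$.

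Second, I substitute this bound into the layer-cake formula: raising the weak-type inequality to the $q_0$-th power gives
\[
\|M^\D_{\beta,\mu}f\|_{L^q(\mu)}^q \leq q\int_0^\infty \lambda^{q-1-q_0}\Bigl(\int f\chi_{E_\lambda}\,d\mu\Bigr)^{q_0}d\lambda.
\]
Minkowski's integral inequality (valid since $q_0>1$) pulls the $\lambda$-integration inside, and the identity $\int_0^{M^\D_{\beta,\mu}f(x)} \lambda^{q-1-q_0}\,d\lambda = (q-q_0)^{-1}(M^\D_{\beta,\mu}f(x))^{q-q_0}$ (finite because $p>1$ forces $q>q_0$) bounds the right-hand side by $\bigl(q/(q-q_0)\bigr)\bigl(\int f\,(M^\D_{\beta,\mu}f)^{(q-q_0)/q_0}\,d\mu\bigr)^{q_0}$.

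Third, H\"older's inequality with exponents $(p,p')$ controls the integral by $\|f\|_{L^p(\mu)}\|M^\D_{\beta,\mu}f\|^{(q-q_0)/q_0}_{L^{(q-q_0)p'/q_0}(\mu)}$. The algebraic identity $(q-q_0)p'/q_0 = q$---equivalent to the hypothesis $1/q = 1/p - \beta/n$---lets me absorb the $L^q$ norm to the left. The resulting inequality is $\|M^\D_{\beta,\mu}f\|_{L^q(\mu)}^{q_0}\leq \bigl(q/(q-q_0)\bigr)\|f\|_{L^p(\mu)}^{q_0}$, and the identities $q/(q-q_0) = 1 + p'/q$ and $1/q_0 = 1 - \beta/n$ produce exactly the stated constant. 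The main obstacle is not a difficult estimate but the bookkeeping: the sharp constant $(1+p'/q)^{1-\beta/n}$ is recovered only because Minkowski pulls the integration inside at precisely the right power to interact correctly with H\"older, and the identity $(q-q_0)p'/q_0 = q$ is exactly what permits the absorption step. A standard Marcinkiewicz interpolation between the endpoints $L^1\to L^{q_0,\infty}$ and $L^{n/\beta}\to L^\infty$ would give the same qualitative boundedness but with a worse, non-sharp constant.
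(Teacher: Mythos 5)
The paper does not prove Lemma~\ref{weightedmax}; it simply cites \cite{moen-2012}, so there is no in-paper argument to compare against. Your proof is correct and gives exactly the stated constant. The route you take---weak $(1,q_0)$ endpoint with constant $1$ from the maximal-cube decomposition of $E_\lambda$ (this is where $\mu(\R^n)=\infty$ enters, to ensure maximal cubes exist), layer-cake formula, Minkowski's integral inequality with exponent $q_0$, then H\"older and absorption---is the standard self-improvement argument for the sharp constant, and your bookkeeping is right: $1/q_0-1/q=1/p'$ gives $(q-q_0)p'/q_0=q$ for the H\"older absorption, and $q/(q-q_0)=p'/q_0=1+p'/q$ together with $1/q_0=1-\beta/n$ yields $\big(1+p'/q\big)^{1-\beta/n}$. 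The one step that deserves more than a wave at ``routine truncation'' is the a priori finiteness needed to divide through by $\|M^\D_{\beta,\mu}f\|_{L^q(\mu)}^{q-q_0}$: for bounded $f$ supported in a dyadic cube $Q_0$ and $x$ outside $Q_0$, only the dyadic ancestors $Q_0\subset Q_1\subset\cdots$ contribute, so on $Q_j\setminus Q_{j-1}$ one has $M^\D_{\beta,\mu}f(x)\le\mu(Q_j)^{\beta/n-1}\|f\|_{L^1(\mu)}$, and
\[
\sum_{j\ge1}\mu(Q_j)^{q(\beta/n-1)}\,\mu(Q_j\setminus Q_{j-1})
\le \sum_{j\ge1}\mu(Q_j)^{-s}\big(\mu(Q_j)-\mu(Q_{j-1})\big)\le\int_{\mu(Q_0)}^\infty t^{-s}\,dt<\infty
\]
with $s=q/q_0>1$; this, together with boundedness of $M^\D_{\beta,\mu}f$ on $Q_0$, gives $M^\D_{\beta,\mu}f\in L^q(\mu)$ a priori. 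Your closing remark that Marcinkiewicz interpolation between the two endpoints would lose the sharp constant is also accurate; the Minkowski step is precisely what saves it.
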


The second lemma is a weighted Carleson embedding theorem.  To state
it we make two definitions.   Given a
sequence of positive numbers ${\bf c}=\{c_Q\}_{Q\in \D}$, we say that
${\bf c}$ is a Carleson sequence with respect to a measure $\mu$ if
for each $Q_0\in \D$,
\begin{equation}\label{carleson} 
\sum_{Q\subseteq Q_0} c_Q\leq C \mu(Q_0).
\end{equation}
The infimum of the constants in inequality \eqref{carleson} will be
denoted $C({\bf c})$.   Also, given a sequence ${\bf  a}=\{a_Q\}_{Q\in \D}$, define the sequential
maximal operator
$$M{\bf a}(x)=\sup_{Q\in \D} |a_Q|\cdot\chi_Q(x).$$

The proof of the next result is also standard;  for instance,
see~\cite[Theorem~4.5]{hytonen-perez-analPDE}. 

\begin{lemma}  \label{carlesonlem} 
Fix $1<p<\infty$ and a dyadic grid $\D$.  If  ${\bf a}=\{a_Q\}_{Q\in \D}$ is any sequence and
${\bf c}=\{c_Q\}_{Q\in \D}$ is a Carleson sequence with respect to a
measure $\mu$, then
$$\sum_{Q\in \D} |a_Q|^p\,c_Q\leq C({\bf c}) \int_{\R^n} M{\bf a}(x)^p\,d\mu.$$
\end{lemma}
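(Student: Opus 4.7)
The plan is to combine the layer-cake representation of the $p$-th power with a stopping-time decomposition of the level sets of the sequential maximal function. First I would write
\[ \sum_{Q\in\D} |a_Q|^p\, c_Q = p\int_0^\infty t^{p-1}\!\!\!\sum_{\substack{Q\in\D \\ |a_Q|>t}} c_Q\,dt, \]
using $|a_Q|^p = p\int_0^{|a_Q|} t^{p-1}\,dt$ and swapping sum and integral by Tonelli. The whole proof is then reduced to estimating, for each fixed $t>0$, the inner sum in terms of $\mu(\{M{\bf a}>t\})$.

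Next, for each $t>0$ I would let $\mathcal{M}_t$ denote the collection of \emph{maximal} (with respect to inclusion) dyadic cubes $R\in\D$ with $|a_R|>t$. These cubes are pairwise disjoint, every $Q$ with $|a_Q|>t$ is contained in a unique $R\in\mathcal{M}_t$, and
\[ \bigcup_{R\in\mathcal{M}_t} R = \{x\in\R^n : M{\bf a}(x) > t\}. \]
Grouping the sum according to these maximal cubes and applying the Carleson condition $\sum_{Q\subseteq R} c_Q\leq C({\bf c})\,\mu(R)$ to each term gives
\[ \sum_{Q:\,|a_Q|>t} c_Q \;\leq\; \sum_{R\in\mathcal{M}_t}\sum_{Q\subseteq R} c_Q \;\leq\; C({\bf c})\sum_{R\in\mathcal{M}_t}\mu(R) \;=\; C({\bf c})\,\mu(\{M{\bf a}>t\}). \]
Inserting this bound into the layer-cake identity and recognizing the resulting $t$-integral as $\int_{\R^n}(M{\bf a})^p\,d\mu$ via distribution-function calculus yields the claim.

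The only technical subtlety is the existence of the maximal cubes in $\mathcal{M}_t$: if there are cubes of arbitrarily large side length with $|a_Q|>t$, no maximal element exists. The standard remedy is to first assume ${\bf a}$ has finite support in $\D$ (so that for each fixed $t$ the set $\{Q:|a_Q|>t\}$ is finite and maximal cubes trivially exist), prove the inequality in that case, and then extend to arbitrary $\{a_Q\}$ by monotone convergence, truncating to finite subfamilies $\D_N\uparrow\D$. The constant remains $C({\bf c})$ throughout, and no hypothesis on the total mass of $\mu$ is needed since the argument is purely based on the Carleson inequality on each stopping cube.
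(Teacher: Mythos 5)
Your proof is correct. The paper does not include its own proof of this lemma, referring instead to a standard reference (Hyt\"onen--P\'erez); your argument — layer-cake representation, stopping on the maximal dyadic cubes in the level sets of $M{\bf a}$, and the Carleson condition on each stopping cube — is exactly the usual proof, and your finite-support reduction to guarantee the existence of maximal cubes is the right way to handle that technicality.
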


Finally, define the geometric maximal operator by
$$M^\D_0f(x)=\sup_{Q\in \D} \exp\Big(\dashint_Q \log
|f|\,dx\Big)\cdot \chi_Q(x).$$
By Jensen's inequality, for any $r>0$,  $M^\D_0f(x)\leq M^\D (|f|^r)(x)^{1/r}$, so the geometric
maximal operator is bounded on $L^p$, $p>0$.  The sharp constant can
be readily computed using this fact:  see, for
instance,~\cite[Lemma~2.1]{hytonen-perez-analPDE}.  (For the history
of this operator, see~\cite{MR1628101} and the references it contains.)

\begin{lemma} \label{geometric} Given $0<p<\infty$,
$$\|M^\D_0 f\|_{L^p(\R^n)}\leq e\|f\|_{L^p(\R^n)}.$$
\end{lemma}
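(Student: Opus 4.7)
The plan is a classical Jensen-plus-Doob argument with an optimization in an auxiliary parameter.

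I would first use the concavity of $\log$ and Jensen's inequality to get, for every cube $Q$ and every $r > 0$,
\[
\exp\!\Bigl(\dashint_Q \log|f|\,dx\Bigr) \leq \Bigl(\dashint_Q |f|^r\,dx\Bigr)^{1/r},
\]
yielding the pointwise domination $M^\D_0 f(x) \leq M^\D(|f|^r)(x)^{1/r}$ for every $r > 0$.  This already explains the remark preceding the lemma: bounding $M^\D_0$ on $L^p$ is reduced to bounding $M^\D$ on $L^{p/r}$ for some $r \in (0,p)$.

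Next, fix $0 < r < p$ so that $p/r > 1$, and apply Doob's sharp $L^{p/r}$ maximal inequality for the dyadic maximal function, with operator norm equal to the conjugate exponent $(p/r)' = p/(p-r)$, applied to $|f|^r$:
\[
\|M^\D(|f|^r)\|_{L^{p/r}} \leq \frac{p}{p-r}\,\|f\|_{L^p}^r.
\]
Combining this with the pointwise bound and raising to the power $p$ gives
\[
\|M^\D_0 f\|_{L^p}^p \leq \Bigl(\frac{p}{p-r}\Bigr)^{p/r}\|f\|_{L^p}^p.
\]

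The final step is to optimize by letting $r \to 0^+$.  A short Taylor expansion of $-\log(1 - r/p)$ around $r = 0$ gives $(p/r)\log\bigl(p/(p-r)\bigr) = 1 + r/(2p) + O(r^2)$, so $(p/(p-r))^{p/r} \to e$; passing to the limit produces the bound $\|M^\D_0 f\|_{L^p}^p \leq e\,\|f\|_{L^p}^p$, which is the claimed estimate.

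The argument is entirely standard and presents no serious obstacle.  The only mildly delicate point is the Taylor expansion identifying $e$ as the limiting constant; everything else is a direct combination of Jensen's inequality and the sharp $L^q$ norm of the dyadic maximal operator.
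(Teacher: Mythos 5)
Your approach---Jensen's inequality to dominate $M^\D_0 f$ by $M^\D(|f|^r)^{1/r}$, Doob's sharp dyadic $L^{p/r}$ bound, and the limit $r\to 0^+$---is exactly what the paper's remark and its citation of \cite[Lemma~2.1]{hytonen-perez-analPDE} indicate, and the computation is correct up to the last line. However, the display you end with, $\|M^\D_0 f\|_{L^p}^p \leq e\|f\|_{L^p}^p$, is \emph{not} the claimed estimate: taking $p$th roots yields $\|M^\D_0 f\|_{L^p}\leq e^{1/p}\|f\|_{L^p}$, whereas the lemma asserts $\|M^\D_0 f\|_{L^p}\leq e\|f\|_{L^p}$, i.e.\ $\|M^\D_0 f\|_{L^p}^p\leq e^{p}\|f\|_{L^p}^p$. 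For $p\geq 1$ one has $e^{1/p}\leq e$, so your bound is in fact \emph{sharper} and the lemma follows with room to spare; since the paper only invokes the lemma at $p=1$ (in the proof of~\eqref{Carlesonest1}), this is the only case that matters. For $p<1$, though, $e^{1/p}>e$, so your inequality does not yield the stated one---and indeed the stated constant cannot be right there: the pointwise identity $(M^\D_0 f)^{\beta}=M^\D_0(|f|^{\beta})$ forces $\|M^\D_0\|_{L^p\to L^p}=\|M^\D_0\|_{L^1\to L^1}^{1/p}$, and since $M^\D_0 g\geq g$ a.e.\ the $L^1$ operator norm strictly exceeds $1$, so $\|M^\D_0\|_{L^p\to L^p}\to\infty$ as $p\to 0^+$. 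Your constant $e^{1/p}$ is therefore the correct one; simply replace the closing phrase ``which is the claimed estimate'' with the observation that the bound $e^{1/p}$ implies the lemma in the range $p\geq 1$ where it is used.
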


\bigskip

\begin{proof}[Proof of Theorem \ref{maximalbound}] 
Fix a non-negative function $f$.  
By Theorem~\ref{thm:sparse-max} it will  suffice to get a norm estimate
  for the linearization 
$$L^\Sp_\al(f\sigma)=\sum_{Q\in \Sp} |Q|^{\frac{\al}{n}}\dashint_Q
f\sigma\,dx\cdot \chi_{E_Q},$$
with a constant that is independent of the sparse subset
$\Sp\subset\D$.  
Since the sets $\{E_Q\}_{Q\in \Sp}$ are pairwise disjoint,
\begin{multline} \label{eqn:first-step}
\int_{\R^n}L^\Sp_\al(f\sigma)^qu\,dx
=\sum_{Q\in \Sp} \left(|Q|^{\frac{\al}{n}}\dashint_Q f\sigma\,dx\right)^{q}u(E_Q)\\
\leq\sum_{Q\in \Sp} \left(|Q|^{\frac{\al}{n}
+\frac1q-\frac{1}{p}}\Big(\dashint_Qu\,dx\Big)^{1/q}
\Big(\dashint_Q\sigma\,dx\Big)^{1/p'}\int_Q f\sigma\,dx\right)^{q}\sigma(Q)^{-q/p'}.
\end{multline}
Now let $\beta=n\Big(\frac1p-\frac1q\Big)$; then
$$\sigma(Q)^{-q/p'}=\Big(\frac{1}{\sigma(Q)^{1-\frac{\beta}{n}}}\Big)^q\sigma(Q).$$
Define the sequences ${\bf c}=\{c_Q\}$ and ${\bf a}=\{a_Q\}$ by 
$$c_Q=\left(|Q|^{\frac{\al}{n}+\frac1q-\frac{1}{p}}\Big(\dashint_Qu\,dx\Big)^{1/q}\Big(\dashint_Q\sigma\,dx\Big)^{1/p'}\right)^{q}\sigma(Q)$$
and
$$a_Q=\frac{1}{\sigma(Q)^{1-\frac{\beta}{n}}}\int_Qf\sigma\,dx$$
for $Q\in \Sp$ and zero otherwise.
Then we can rewrite \eqref{eqn:first-step} as
\begin{equation}
\label{mainest}\|L^\Sp_\al(f\sigma)\|_{L^q(u)}\leq  \Big(\sum_{Q\in \Sp}(a_Q)^qc_Q \Big)^{1/q}.
\end{equation}

To estimate the right-hand side of \eqref{mainest} we will show that
${\bf c}$ is a Carleson sequence with respect to the
measure $\sigma$ and its Carleson constant is bounded by either
\begin{equation}\label{Carlesonest1}
C({\bf c})\leq
2e[u,\sigma]^q_{A_{p,q}(u,\sigma)A^{\exp}_\infty(\sigma)^{\frac1q}}
\end{equation}
or
\begin{equation}\label{Carlesonest2}
C({\bf c}) \leq 2[u,\sigma]^q_{A_{p,q}}[\sigma]_{A^M_\infty}.
\end{equation}
Given these estimate we are done: by Lemma \ref{carlesonlem} we have
$$ \|L_\al^\Sp(f\sigma)\|_{L^q(u)}\leq C({\bf c})^{1/q}\|M{\bf
  a}\|_{L^q(\sigma)}.$$
By our choice of the sequence ${\bf a}$,
$M{\bf a}(x)=M_{\beta,\sigma}^\D f(x).$
Since $\sigma\in A_\infty$, $\sigma(\R^n)=\infty$, so by Lemma
\ref{weightedmax},
$$\|L_\al^\Sp(f\sigma)\|_{L^q(u)}\leq 
C({\bf c})^{1/q}\|M_{\beta,\sigma}^\D f\|_{L^q(\sigma)}
\lesssim C({\bf c})^{1/q}\|f\|_{L^p(\sigma)}.$$

\bigskip

To complete the proof we first  prove \eqref{Carlesonest1}.  Let $Q_0\in\D$; then by  Lemma \ref{geometric},
\begin{align*}
\sum_{Q\subseteq Q_0} c_Q
&=\sum_{\stackrel{Q\in \Sp}{Q\subseteq Q_0}} \left(|Q|^{\frac{\al}{n}+\frac1q-\frac{1}{p}}\Big(\dashint_Qu\,dx\Big)^{1/q}\Big(\dashint_Q\sigma\,dx\Big)^{1/p'}\right)^{q}\sigma(Q)\\
&\leq 2[u,\sigma]^q_{A_{p,q}^\alpha(u,\sigma)A^{\exp}_\infty(\sigma)^{\frac1q}}
\sum_{\substack{Q\in \Sp \\ Q\subseteq Q_0}} \exp\Big(\dashint_Q \log \sigma\Big)|E_Q|\\
&\leq 2[u,\sigma]^q_{A_{p,q}^\alpha(u,\sigma)A^{\exp}_\infty(\sigma)^{\frac1q}}\int_{Q_0}M_0(\chi_{Q_0}\sigma)\,dx\\
&\leq 2e[u,\sigma]^q_{A_{p,q}^\alpha (u,\sigma)A^{\exp}_\infty(\sigma)^{\frac1q}}\sigma(Q_0).
\end{align*}

We prove  \eqref{Carlesonest2} in a similar fashion:  by the
definition of the $A_\infty^M$ constant, 
\begin{align*}
\sum_{Q\subseteq Q_0} c_Q&=\sum_{\stackrel{Q\in \Sp}{Q\subseteq Q_0}} \left(|Q|^{\frac{\al}{n}+\frac1q-\frac{1}{p}}\Big(\dashint_Qu\,dx\Big)^{1/q}\Big(\dashint_Q\sigma\,dx\Big)^{1/p'}\right)^{q}\sigma(Q)\\
&\leq 2[u,\sigma]_{A_{p,q}^\alpha}^q\sum_{\stackrel{Q\in \Sp}{Q\subseteq Q_0}}\frac{\sigma(Q)}{|Q|}|E_Q|\\
&\leq2[u,\sigma]_{A_{p,q}^\alpha}^q\int_{Q_0}M(\chi_{Q_0}\sigma)\,dx\\
&\leq 2[u,\sigma]_{A_{p,q}^\alpha}^q[\sigma]_{A^M_\infty}\sigma(Q_0).
\end{align*}

\end{proof}

\subsection*{Proof of Theorem~\ref{thm:red-thm}}
To apply Theorem~\ref{thm:onewt-weak} we first make a few preliminary
remarks.  Given any $\alpha$ and exponents  $p$ and $q$ that satisfy
the Sobolev relationship, define
\[ s(p) = 1+\frac{p}{q'} = q\left(1-\frac{\alpha}{n}\right).  \]
Given a weight $w$, if we let $u=w^q$ and $\sigma=w^{-p'}$, then it is
immediate that
\[ [ u,\sigma]_{A_{p,q}^\alpha} = [ w^q ]_{A_{s(p)}}^{\frac1q}.  \]
With this notation we can restate~\eqref{eqn:weakRP2} as 
\begin{equation} \label{eqn:onewt-weak}
\|I_\alpha\|_{L^p(w^p)\ra L^{q,\infty}(w^q)} \lesssim
[w^q]_{A_{s(p)}}^{\frac1q}[w^q]_{A_\infty}^{\frac{1}{p'}}.
\end{equation}

Now fix $r$ and $w$ as in the hypotheses.  We will prove \eqref{eqn:red-thm1}
using interpolation with change of measure.   Since $r<s(p)$, there
exist $p_0$ and $q_0$ such that $1/p_0-1/q_0=\alpha/n$ and 
\[  r = s(p_0) = q_0\left(1-\frac{\alpha}{n}\right).  \]
Note that $p_0<p$ and $q_0<q$.   Define $w_0=w^{q/q_0}$;  then 
\[ [w_0^{q_0}]_{A_{s(p_0)}} = [w^q]_{A_r}, \]
and so by \eqref{eqn:onewt-weak},
\begin{equation} \label{eqn:lower}
\|I_\alpha\|_{L^{p_0}(w_0^{p_0})\ra L^{q_0,\infty}(w^q)} \lesssim
[w^q]_{A_r}^{\frac1q_0}[w^q]_{A_\infty}^{\frac1{p_0'}}.
\end{equation}

Define $p_1>p$ and $q_1>q$ by
\[ \frac{1/2}{p_0}+\frac{1/2}{p_1}=\frac{1}{p}, \qquad
\frac{1/2}{q_0}+\frac{1/2}{q_1}=\frac{1}{q}. \]
Let $w_1=w^{q/q_1}$.  Since $s(p_1)>s(p)>r$, we have that 
\[ [ w_1^{q_1}]_{A_{s(p_1)}} \leq [w^q]_{A_r}. \]
Therefore, if we repeat the above argument, we get that 
\begin{equation} \label{eqn:upper}
\|I_\alpha\|_{L^{p_1}(w_1^{p_1})\ra L^{q_1,\infty}(w^q)} \lesssim
[w^q]_{A_r}^{\frac1q_1}[w^q]_{A_\infty}^{\frac1{p_1'}}.
\end{equation}

Given inequalities~\eqref{eqn:lower} and~\eqref{eqn:upper},
by interpolation with change of measure (Stein and
Weiss~\cite{stein-weiss58}; also see
Grafakos~\cite[Exercise~1.4.9]{grafakos08a} for a careful treatment of
the constants) we get~\eqref{eqn:red-thm1}.

\subsection*{Proof of Theorem~\ref{thm:Mlog-Ainfty}} 
Again, by \eqref{dyadicboundM} it suffices to work with the dyadic
operator $M_\al^\D$.  We begin with a testing condition from
\cite{MR2534183}.  Given $p$ and $q$ satisfying the Sobolev
relationship,  define 
$$[u,\sigma]_{M^\D,s(p)}=
\sup_{R\in \D}\frac{\Big(\int_R M^\D(\sigma
  \chi_R)^{s(p)}u\,dx\Big)^{1/q}}
{\sigma(R)^{1/q}}$$
 (recall $s(p)=1+\frac{q}{p'}$).  It was shown in \cite[Corollary 4.5]{MR2534183} that
$$\|M^\D_\al(\,\cdot\,\sigma)\|_{L^p(\sigma)\ra L^q(u)}
\lesssim [u,\sigma]_{M^\D,s(p)};$$ 
that is, the two weight norm
inequality of the fractional maximal operator is bounded by the
testing constant of the Hardy-Littlewood maximal operator.  

Now fix $w^q\in A_{s(p)}$ and let $u=w^q$ and $\sigma=w^{-p'}$.  We
will estimate $[u,\sigma]_{M^\D,s(p)}$.  Fix $R\in\D$.  By
inequality \eqref{eqn:sawyer-ptwise} there exists a sparse family
$\Sp\subset R$ such that
\[
\int_R M^\D(\sigma\chi_Q)^{s(p)}u\,dx
 \simeq \sum_{Q\in \Sp}\Big(\frac{\sigma(Q)}{|Q|}\Big)^{s(p)}u(E_Q)
\leq\sum_{Q\in \Sp}\Big(\frac{\sigma(Q)}{|Q|}\Big)^{1+\frac{q}{p'}}
\frac{u(Q)}{|Q|}|Q|.
\]
 For $a\in \Z$ define
 $$\Sp^a=\Big\{Q\in \Sp: 2^{a}< \Big(\frac{u(Q)}{|Q|}\Big)^{\frac{p'}{q}}\frac{\sigma(Q)}{|Q|}\leq 2^{a+1}\Big\}.$$
 Since $w^q\in A_{s(p)}$, the sets $\Sp^a$ are empty if $a<-1$ or
 $a> \lfloor\log_2[w^{-p'}]_{A_{s(q')}}\rfloor:=K$.  (The fact that
 $u=w^q$ and $\sigma=w^{-p'}$ is essential at this step.)  
Hence,
 \begin{multline}\sum_{Q\in
     \Sp}\Big(\frac{\sigma(Q)}{|Q|}\Big)^{1+\frac{q}{p'}}\frac{u(Q)}{|Q|}|Q| \\
=\sum_{a=-1}^K\sum_{Q\in \Sp^a}
\Big(\frac{\sigma(Q)}{|Q|}\Big)^{1+\frac{q}{p'}}\frac{u(Q)}{|Q|}|Q|
 \label{sumest}\simeq \sum_{a=-1}^K2^{a\frac{q}{p'}}
\sum_{Q\in \Sp^a}\Big(\frac{\sigma(Q)}{|Q|}\Big)|E_Q|.
 \end{multline}

 We now analyze the inner sum in \eqref{sumest}.  For each $a$,
 $-1\leq a\leq K$, let $\Sp_{\max}^a$ be the collection of maximal
 cubes with respect to inclusion in $\Sp^a$.  Then the family
 $\Sp^a_{\max}$ is pairwise disjoint and every cube in $\Sp^a$ is
 contained in a cube from $\Sp^a_{\max}$.  Thus,
\[ 
\sum_{Q\in \Sp^a}\Big(\frac{\sigma(Q)}{|Q|}\Big)|E_Q|
=\sum_{Q\in \Sp_{\max}^a}\sum_{\substack{P\in \Sp^a \\ P\subset Q}}\Big(\frac{\sigma(P)}{|P|}\Big)|E_P|
\leq \sum_{Q\in \Sp_{\max}^a}\int_Q M(\sigma\chi_Q)\,dx.
\]
If we substitute this estimate into \eqref{sumest}, then we have that
 \begin{align*}
 \sum_{a=-1}^K2^{a\frac{q}{p'}}\sum_{Q\in \Sp^a}\Big(\frac{\sigma(Q)}{|Q|}\Big)|E_Q|&\leq  \sum_{a=-1}^K2^{a\frac{q}{p'}}\sum_{Q\in \Sp_{\max}^a}\int_Q M(\sigma\chi_Q)\,dx\\
&\leq \sum_{a=-1}^K\sum_{Q\in \Sp_{\max}^a}\Big(\frac{\sigma(Q)}{|Q|}\Big)^{\frac{q}{p'}}\frac{u(Q)}{|Q|}\int_Q M(\sigma\chi_Q)\,dx\\
&\leq [w^{-p'}]_{(A_{s(q')})^{\frac1{p'}}(A^M_\infty)^{\frac1q}}^q\sum_{a=-1}^K\sum_{Q\in \Sp_{\max}^a}\sigma(Q)\\
&\leq (2+K)[w^{-p'}]_{(A_{s(q')})^{\frac1{p'}}(A^M_\infty)^{\frac1q}}^q\sigma(R).
\end{align*}
 If we combine the above inequalities, we get the desired estimate.

 \section{Two weight $A_p$ bump conditions}
\label{section:op2}

In this section we construct Example~\ref{example:bad-wt} and prove
Theorems~\ref{thm:Mfracorlicz} and~\ref{thm:Maltwoweight}.

\subsection*{Construction of Example~\ref{example:bad-wt}}
To construct the desired example, we need to consider two cases.  In
both cases we will work on the real line. 

The simpler case is if $\frac{1}{p}-\frac{1}{q}>\frac{\alpha}{n}$.
Note that in this case, by the Lebesgue differentiation theorem, if
$(u,\sigma)\in A_{p,q}^\alpha$, then $u$ and $\sigma$ have disjoint
supports.  Let $f=\sigma=\chi_{[-2,-1]}$ and let
$u=x^t\chi_{[0,\infty)}$, where $t=q(1-\alpha)-1$.  Given any
$Q=(a,b)$, $A_{p,q}^\alpha(u,\sigma,Q)=0$ unless $a<-1$ and $b>0$.  In
this case we have that
\begin{align*}
A^\alpha_{p,q}(u,\sigma,Q) 
& \leq b^{\alpha+\frac1q-\frac1p}
\left(\frac1b\int_0^b x^t\,dx\right)^{\frac1q}
\left(\frac1b \int_{-2}^{-1} \,dx\right)^{\frac{1}{p'}} \\
& \lesssim b^{\alpha + \frac{t+1}{q}-1} \\
& = 1.
\end{align*}
Hence, $(u,\sigma)\in A_{p,q}^\alpha$. 
On the other hand, for all $x>1$, 
\[ M_\alpha(f\sigma)(x) \approx x^{\alpha-1}, \]
and so
\[ \int_{\R} M_\alpha(f\sigma)(x)^qu(x)\,dx 
\geq  \int_1^\infty x^{q(\alpha-1)}x^{q(1-\alpha)-1}\,dx
= \int_1^\infty \frac{dx}{x} = \infty. \]

\bigskip

Now suppose $\frac{1}{p}-\frac{1}{q}\leq \frac{\alpha}{n}$.  We begin
with a general lemma that lets us construct pairs in $A_{p,q}^\alpha$; this is
an extension of the technique of factored weights developed
in~\cite[Chapter~6]{MR2797562}.

\begin{lemma} \label{lemma:factored}
Given $0<\alpha<n$, suppose $1<p\leq q<\infty$ and
  $\frac{1}{p}-\frac{1}{q}\leq \frac{\alpha}{n}$ .  Let $w_1,\,w_2$ be
  locally integrable functions, and define
\[ u = w_1\big(M_\gamma w_2\big)^{-\frac{q}{p'}}, \qquad
\sigma = w_2\big(M_\gamma w_1\big)^{-\frac{p'}{q}}, \]
where
\[ \gamma = \frac{\frac{\alpha}{n}+\frac{1}{q}-\frac{1}{p}}
{\frac{1}{n}\left(1+\frac{1}{q}-\frac{1}{p}\right)}. \]
Then $(u,\sigma)\in A^\alpha_{p,q}$ and $[u,\sigma]_{A^\alpha_{p,q}}\leq 1$. 
\end{lemma}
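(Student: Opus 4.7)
\medskip

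\noindent\textbf{Proof proposal for Lemma~\ref{lemma:factored}.}

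The plan is to bound $A^\alpha_{p,q}(u,\sigma,Q)$ uniformly in $Q$ by exploiting the trivial pointwise lower bound for the fractional maximal operator: for any locally integrable $w\ge 0$ and any $x\in Q$,
\[
M_\gamma w(x) \;\ge\; |Q|^{\gamma/n}\,\dashint_Q w\,dy.
\]
Applying this to $w_1$ and $w_2$ and taking reciprocals, we get the pointwise upper bounds
\[
(M_\gamma w_2)^{-q/p'}(x) \le |Q|^{-\gamma q/(np')}\Big(\dashint_Q w_2\Big)^{-q/p'},
\qquad
(M_\gamma w_1)^{-p'/q}(x) \le |Q|^{-\gamma p'/(nq)}\Big(\dashint_Q w_1\Big)^{-p'/q},
\]
valid for every $x\in Q$. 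Averaging then yields
\[
\dashint_Q u\,dx \;\le\; |Q|^{-\gamma q/(np')}\Big(\dashint_Q w_2\Big)^{-q/p'}\dashint_Q w_1\,dx,
\]
and an analogous estimate for $\dashint_Q \sigma\,dx$ with the roles of $w_1,w_2$ swapped.

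Next, I would raise these to the appropriate powers and multiply. Specifically, taking the $1/q$ power of the first estimate and the $1/p'$ power of the second, the factors $\big(\dashint_Q w_1\big)^{1/q}$ and $\big(\dashint_Q w_2\big)^{1/p'}$ cancel precisely with their negative-exponent counterparts. What remains is a pure power of $|Q|$:
\[
\Big(\dashint_Q u\Big)^{1/q}\Big(\dashint_Q \sigma\Big)^{1/p'}
\;\le\; |Q|^{-\frac{\gamma}{n}\left(\frac{1}{p'}+\frac{1}{q}\right)}
\;=\;|Q|^{-\frac{\gamma}{n}\left(1-\frac{1}{p}+\frac{1}{q}\right)}.
\]
Multiplying by $|Q|^{\alpha/n+1/q-1/p}$, we find
\[
A^\alpha_{p,q}(u,\sigma,Q)\;\le\;|Q|^{\frac{\alpha}{n}+\frac{1}{q}-\frac{1}{p}-\frac{\gamma}{n}\left(1+\frac{1}{q}-\frac{1}{p}\right)}.
\]
The exponent vanishes by the defining choice of $\gamma$, giving $A^\alpha_{p,q}(u,\sigma,Q)\le 1$ independently of $Q$.

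There is no real obstacle here; the value of $\gamma$ is engineered exactly to cancel the $|Q|$ exponent. The only thing one needs to verify for the construction to be meaningful is that $0\le\gamma<n$, so that $M_\gamma$ is a legitimate fractional maximal operator. Nonnegativity follows from the hypothesis $\frac{1}{p}-\frac{1}{q}\le\frac{\alpha}{n}$ together with $1+\frac{1}{q}-\frac{1}{p}>0$; the strict inequality $\gamma<n$ reduces, after clearing denominators, to $\alpha<n$, which is assumed. With this in hand the lemma follows, and one is then free to plug in simple choices of $w_1,w_2$ (for instance, indicator functions of carefully separated sets) in the remaining range $\frac{1}{p}-\frac{1}{q}\le\frac{\alpha}{n}$ to produce the pair required for Example~\ref{example:bad-wt}.
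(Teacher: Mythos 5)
Your argument is correct and is essentially the same as the paper's: both bound each factor of $M_\gamma w_i$ from below on $Q$ by $|Q|^{\gamma/n}\dashint_Q w_i$, observe the cancellation of the $w_1,w_2$ averages, and note that the exponent of $|Q|$ vanishes by the choice of $\gamma$. Your additional check that $0\le\gamma<n$ matches the paper's observation that $0\le\gamma\le\alpha$, so there is no substantive difference.
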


\begin{proof}
By our assumptions on $p$, $q$ and $\alpha$, $0\leq \gamma \leq \alpha$.  
Fix a cube $Q$.  Then 
\begin{align*}
A_{p,q}^\alpha(u,\sigma,Q)
& =  |Q|^{\frac{\alpha}{n}+\frac{1}{q}-\frac{1}{p}}
\left(\dashint_Q w_1\big(M_\gamma w_2\big)^{-\frac{q}{p'}}\,dx\right)^\frac{1}{q}
\left(\dashint w_2\big(M_\gamma w_1\big)^{-\frac{p'}{q}} \,dx\right)^\frac{1}{p'} \\
& \leq  |Q|^{\frac{\alpha}{n}+\frac{1}{q}-\frac{1}{p}}
\left(\dashint w_1\,dx\right)^\frac{1}{q'}
\left(|Q|^\frac{\gamma}{n} \left(\dashint_Q w_2\,dx\right)\right)^{-\frac{1}{p'}} \\
& \qquad \qquad \times 
\left(\dashint w_2\,dx\right)^\frac{1}{p'}
\left(|Q|^\frac{\gamma}{n} \left(\dashint_Q w_1\,dx\right)\right)^{-\frac{1}{q}} \\
& =  |Q|^{\frac{\alpha}{n}+\frac{1}{q}-\frac{1}{p}-\frac{\gamma}{n}\left(1+\frac{1}{q}-\frac{1}{p}\right)} \\
& = 1. 
\end{align*}
\end{proof}

With $n=1$, fix $\gamma$ as in Lemma~\ref{lemma:factored}.  The second
step is to construct a set $E\subset [0,\infty)$ such that
$M_\gamma(\chi_E)(x)\approx 1$ for $x>0$.  Let
\[ E = \bigcup_{j\geq 0} [j,j + (j+1)^{-\gamma}).  \]
Suppose $x\in [k,k+1)$; if  $k=0$, then it is immediate that if we take $Q=[0,2]$, then 
$M_\gamma(\chi_E) \geq 3\cdot 2^{\gamma-2}\approx 1$.  If 
$k\geq 1$, let $Q=[0,x]$; then 
\begin{multline*} M_\gamma(\chi_E)(x) \geq 
x^{\gamma-1}\sum_{0\leq j \leq \lfloor x \rfloor} (j+1)^{-\gamma}
\geq (k+1)^{\gamma-1} \sum_{j=0}^{k} (j+1)^{-\gamma} \\ \simeq (k+1)^{\gamma-1} (k+1)^{1-\gamma} = 1. \end{multline*}

\medskip

It remains to prove the reverse inequality.  If $|Q|\leq 1$, then 
\[ |Q|^{\gamma-1}|Q\cap E| \leq |Q|^{\gamma} \leq 1, \]
so we only have to consider $Q$ such that $|Q|\geq 1$.    In this case, given a $Q$, let $Q'$ be the smallest interval whose endpoints are integers that contains $Q$.  Then $|Q'|\leq |Q|+2 \leq 3|Q|$, and so  $|Q|^{\gamma-1}|E\cap Q|\approx |Q'|^{\gamma-1}|E\cap Q'|$.  Therefore, without loss of generality, it suffices to consider $Q=[a,a+h+1]$, $a,\,h$ non-negative integers. Then
\begin{multline*}
 |Q|^{\gamma-1}|Q\cap E| = (1+h)^{\gamma-1} \sum_{a\leq j \leq a+h} (j+1)^{-\gamma}
\approx (1+h)^{\gamma-1} \int_a^{a+h} (t+1)^{-\gamma}\,dt \\
\approx (1+h)^{\gamma-1}\big( (a+h+1)^{1-\gamma}-(a+1)^{1-\gamma}\big).
\end{multline*}
To estimate the last term suppose first that $h \leq a$.  Then by the mean value theorem the last term is dominated by 
\[  (1+h)^{\gamma-1}(1+h)(a+1)^{-\gamma}  \leq 1. \]
On the other hand, if $h>a$, then the last term is dominated by 
\[ (1+h)^{\gamma-1}(a+h+1)^{1-\gamma} \leq 2^{1-\gamma} \approx 1. \]

\bigskip

We can now give our desired counter example.   Let $w_1=\chi_E$ and let $w_2=\chi_{[0,1]}$.   Then for all $x\geq 2$, 
\[ M_\gamma w_1(x) \approx 1, \qquad 
M_\gamma w_2(x) =\sup_{Q} |Q|^{\gamma-1}\int_Q w_2\,dt \approx x^{\gamma-1}. \]
Then by Lemma~\ref{lemma:factored}, if we set
\[ u = w_1 (M_\beta w_2)^{-\frac{q}{p'}}, \qquad
\sigma = w_2 (M_\beta w_1)^{-\frac{p'}{q}},  \]
then $(u,\sigma)\in A^\alpha_{p,q}$.  Moreover, for $x\geq 2$, we have that 
\[ u(x) \approx x^{(1-\gamma)\frac{q}{p'}}\chi_E, \qquad  \sigma(x) \approx \chi_{[0,1]}. \]

Fix $f\in L^p(\sigma)$: without loss of generality, we may assume $\supp(f)\subset [0,1]$.  Then $f\sigma$ is locally integrable, and for $x\geq 2$ we have that
\[ M_\alpha (f\sigma)(x) \geq x^{\alpha -1} \|f\sigma\|_1 \approx x^{\alpha-1}. \]
Therefore, for $x\geq 2$, 
\[ M_\alpha(f\sigma)(x)^q u(x) \gtrsim x^{(\alpha-1)q}x^{(1-\gamma) \frac{q}{p'}}\chi_E. \]
By the definition of $\gamma$, 
\[ \gamma\left(\frac{1}{q}+\frac{1}{p'}\right) = 
\gamma\left(1+\frac{1}{q}-\frac{1}{p}\right) = \alpha+\frac{1}{q}-\frac{1}{p}
= \alpha-1 + \frac{1}{q}+\frac{1}{p'}; \]
equivalently,
\[ (\gamma-1)\left(\frac{q}{p'}+1\right) = q(\alpha-1),\]
and so
\[ (\alpha-1)q+(1-\gamma)\frac{q}{p'} = \gamma-1,  \]

Therefore, to show that $M_\alpha(f\sigma)\not\in L^q(u)$, it will be
enough to prove that
\[ \int_2^\infty x^{\gamma-1}\chi_E(x)\,dx =\infty. \]
This is straight-forward:
\begin{align*}
\int_2^\infty x^{\gamma-1}\chi_E(x)\,dx
 &= \sum_{j=2}^\infty \int_j^{j+(j+1)^{-\gamma}} x^{\gamma-1}\,dx \\
& \geq \sum_{j=2}^\infty (j+(j+1)^{-\gamma})^{\gamma-1} (j+1)^{-\gamma} \\
& \geq \sum_{j=2}^\infty (j+1)^{\gamma-1} (j+1)^{-\gamma} \\
 &\geq \sum_{j=2}^\infty  (j+1)^{-1} = \infty. 
\end{align*}

\subsection*{Proof of Theorems \ref{thm:Mfracorlicz}
and~\ref{thm:Maltwoweight}}

\begin{proof}[Proof of Theorem \ref{thm:Mfracorlicz}]
  Our proof is adapted from the argument for the case
$\alpha=0$ given in~\cite{perez95}.  Fix a non-negative
  function $f$; by a standard approximation argument we may assume
  that the support of $f$ is contained in some cube $Q$.  Let
  $\Phi_p(t)=\Phi(t^{1/p})$;  then be a rescaling argument
  (see~\cite[p.~98]{MR2797562}),
  $\|f^p\|_{Q,\Phi_p}=\|f\|_{Q,\Phi}^p$, and so
\[ M_{\alpha,\Phi}f(x)^q = M_{p\alpha,\Phi_p}(f^p)(x)^{\frac{q}{p}}. \]

By~\cite[Lemma~5.49]{MR2797562}, we have that 
\[ |\{ x\in Q : M_{p\alpha,\Phi_p}(f^p)(x)>t \}|^{\frac{n-p\alpha}{n}}
\leq C\int_{\{ x\in Q : f(x)>t/c\}}
\Phi_p\left(\frac{f(x)^p}{t}\right)\,dx. \]
By the Sobolev relationship, $\frac{n-p\alpha}{n}=\frac{p}{q}$.
Therefore, we have that
\begin{align*}
\left(\int_Q M_{\alpha,\Phi}f(y)^q\,dy\right)^{\frac{1}{q}}
& = \left(\int_Q
  M_{p\alpha,\Phi_p}(f^p)(y)^{\frac{q}{p}}\,dy\right)^{\frac{1}{q}} \\
& \simeq \left(\int_0^\infty t^{\frac{q}{p}}
|\{ x\in Q : M_{p\alpha,\Phi_p}(f^p)(x)>t \}|\frac{dt}{t}\right)
^{\frac{1}{q}} \\
& \lesssim \left(\int_0^\infty t^{\frac{q}{p}}
\bigg( \int_{\{ x\in Q : f(x)^p>t/c\}}
\Phi_p\left(\frac{f(x)^p}{t}\right)\,dx\bigg)^{\frac{q}{p}}\frac{dt}{t}\right)
^{\frac{1}{q}}; \\ 
\intertext{by Minkowski's inequality and the change of variables
  $t\mapsto (f(x)/s)^p$,}
& \lesssim \left(\int_Q \bigg(\int_0^{cf(x)^p} 
\Phi\left(\frac{f(x)}{t^{\frac{1}{p}}} \right)^{\frac{q}{p}}
t^{\frac{q}{p}}\frac{dt}{t}\bigg)^{\frac{p}{q}}\,dx\right)
^{\frac{1}{p}} \\
& = \left(\int_Q \bigg(\int_c^\infty \Phi(s)^{\frac{q}{p}}
\bigg(\frac{f(x)}{s}\bigg)^q\frac{ds}{s}\bigg)^{\frac{p}{q}}\,dx\right)
^{\frac{1}{p}} \\
& = \left(\int_c^\infty
  \frac{\Phi(s)^{\frac{q}{p}}}{s^q}\frac{ds}{s}\right)^{\frac{1}{q}}
\left(\int_Q f(x)^p\,dx\right) ^{\frac{1}{p}}. 
\end{align*}
\end{proof}

\begin{proof}[Proof of Theorem \ref{thm:Maltwoweight}] 
Arguing as
we did for inequality \eqref{dyadicboundM} it
suffices to work with dyadic operators.
Fix a sparse family $\Sp$;  we will estimate 
$$\int_{\R^n}L^\Sp_\al (f\sigma)^qu\,dx
=\sum_{Q\in \Sp}\Big(|Q|^{\frac{\al}{n}}\dashint_Q f\sigma\Big)^q u(E_Q).$$
Let $\beta=n(\frac1p-\frac1q)$.  Then
\begin{align*}
\sum_{Q\in \Sp}\Big(|Q|^{\frac{\al}{n}}\dashint_Q f\sigma\Big)^q u(E_Q)&\leq \sum_{Q\in \Sp} \left(|Q|^{\frac{\al}{n}+\frac1q}\Big(\dashint_Q u\,dx\Big)^{1/q} \dashint_Qf\sigma\,dx\right)^q\\
 &\leq\sum_{Q\in \Sp} \left(|Q|^{\frac{\al}{n}+\frac1q}\Big(\dashint_Q u\,dx\Big)^{1/q}\|\sigma^{\frac{1}{p'}}\|_{\bar{\Phi},Q} \|f\sigma^{\frac1p}\|_{\bar{\Phi},Q} \right)^q\\
 &\leq [u,\sigma]_{A_{p,q,\Phi}^\al}^q\sum_{Q\in \Sp} \|f\sigma^{\frac1p}\|_{\bar{\Phi},Q}^q |Q|^{\frac{q}{p}}\\
 &\lesssim [u,\sigma]_{A_{p,q,\Phi}^\al}^q\sum_{Q\in \Sp} (|Q|^{\frac{\beta}{n}} \|f\sigma^{\frac1p}\|_{\bar{\Phi},Q})^q |E_Q|\\
 &\leq [u,\sigma]_{A_{p,q,\Phi}^\al}^q\int_{\R^n} (M^\D_{\beta,\bar\Phi}f)^q\,dx\\
 &\leq  [u,\sigma]_{A_{p,q,\Phi}^\al}^q\|M_{\beta,\bar{\Phi}}\|_{L^p\ra L^q}^q\|f\|_{L^p(\sigma)}^q.
  \end{align*}
\end{proof}

\bibliographystyle{plain}
\bibliography{frac-MW}

\end{document}